\theoremstyle{plain}
\newtheorem{theorem}{Theorem}[section]
\newtheorem{proposition}[theorem]{Proposition}
\newtheorem{corollary}[theorem]{Corollary}
\newtheorem{assumption}[theorem]{Assumption}
\newtheorem{lemma}[theorem]{Lemma}
\theoremstyle{definition}
\newtheorem{definition}[theorem]{Definition}
\newtheorem{remark}[theorem]{Remark}
\newcommand{\CC}{{\mathbb C}}
\newcommand{\FF}{{\mathbb F}}
\newcommand{\GG}{{\mathbb G}}
\newcommand{\QQ}{{\mathbb Q}}
\newcommand{\RR}{{\mathbb R}}
\newcommand{\ZZ}{{\mathbb Z}}
\newcommand{\WW}{{\mathbb W}}
\newcommand{\cH}{{\mathcal H}}
\newcommand{\cF}{{\mathcal F}}
\newcommand{\cS}{{\mathcal S}}
\newcommand{\cO}{{\mathcal O}}
\newcommand{\cX}{{\mathcal X}}
\newcommand\Lie{{\rm Lie}}
\newcommand\Sch{{\rm Sch}}
\begin{document}

\title{On two mod $p$ period maps: Ekedahl--Oort   and fine Deligne--Lusztig stratifications}
\author{Fabrizio Andreatta}
\address{Dipartimento di Matematica ``F. Enriques'', Universit{\`a} degli Studi di Milano\\ via C. Saldini, 50\\ Milano I-20133\\ Italy}\email{Fabrizio.Andreatta@unimi.it}
\keywords{Ekedahl--Oort stratifications, Deligne--Lusztig varieties, Period maps}
\subjclass [2000]{14G35, 11G18}
\maketitle

\begin{abstract}  Consider a Shimura variety of Hodge type admitting a smooth integral model $S$ at an odd prime $p\geq 5$.  Consider its perfectoid cover  $S^{\rm ad}(p^\infty)$ and the Hodge-Tate period map introduced by A.~Caraiani and P.~Scholze. We compare the pull-back to $\cS(p^\infty)^{\rm ad}$ of the Ekedahl-Oort stratification on the mod $p$ special fiber of $S$   and the pull back to $\cS(p^\infty)^{\rm ad}$ of the  fine Deligne-Lusztig stratification on the mod $p$ special fiber of the flag variety which is the target of the Hodge-Tate period map. An application to the non-emptiness of Ekedhal-Oort strata is provided.

\end{abstract}

\tableofcontents

\section{Introduction}

Let $(G,X)$ be a Shimura datum; in particular, $G$ is a connected reductive group over $\QQ$ and $X$ is a $G(\RR)$-conjugacy class of a homomorphism of algebraic groups  $h\colon {\rm Res}_{\CC/\RR}(\GG_m)\to G_\RR$.
Denote by $\mathrm{Sh}_K(G,X)$ the associated Shimura variety over $\CC$, for some compact open subgroup $K\subset G(\mathbb{A}^f)$ of the finite adelic points of $G$. We fix an odd prime $p$ and we assume that:

\begin{itemize}

\item[i.] $K$ is hyperspecial at $p$ i.e., $K=K_p K^p$ with $K^p\subset G(\mathbb{A}^{f,(p)})$ 
a compact open subgroup of the prime-to-$p$, finite  adelic points of $G$ and $K_p=G_{\ZZ_p}(\ZZ_p)$, the group of $\ZZ_p$-points of  some reductive group $G_{\ZZ_p}$ over $\ZZ_p$ with generic fiber $G_{\QQ_p}$;

\item[ii.] $K$ is neat;

\item[iii.] $(G,X)$ is of Hodge type, that is, there is an embedding of Shimura data $(G,X)\subset ({\rm GSp}_{2g}, \mathbb{H}_g)$ with ${\rm GSp}_{2g}$ the group of symplectic similitudes on a $\QQ$-vector space of dimension $2g$, endowed with a non-degenerate symplectic pairing and with $\mathbb{H}_g$ the double Siegel  space.

\end{itemize}

Results of Shimura and Deligne guarantee that $\mathrm{Sh}_K(G,X)$ admits a canoncial model over the reflex field $E$ which is a number field, defined as the minimal subfield of $\CC$ of definition of the conjugacy class $Z$ of the Hodge cocharacter  $$\mu_h\colon \GG_{m,\CC}\stackrel{{\rm Id}\times 1}{\longrightarrow} \GG_{m,\CC} \times \GG_{m,\CC} \cong {\rm Res}_{\CC/\RR}(\GG_m)_\CC\stackrel{h_\CC}{\longrightarrow} G_\CC.$$

Fix a prime $v$ of $E$ over the prime $p$; let $\kappa$ be the residue field of $\cO_E$ at $v$. Thanks to our assumptions $v$ is unramified over $p$ so that $\widehat{\cO}_{E,v}\cong \WW(\kappa)$ and $Z$ admits a $\WW(\kappa)$-valued point that we denote $\mu$.

Denote by $S$ be the integral canonical model of $\mathrm{Sh}_K(G,X)$ over $\cO_{E,v}$ constructed by Kisin \cite{Kisin}. Given a finite and admissible rational cone decomposition $\Sigma$ for $G$, we denote by $S^\Sigma$  the associated toroidal compactification constructed in \cite{MP}. We fix one and we suppose that $S^\Sigma$ is smooth and projective over $\cO_{E,v}$ (and certain other technical assumptions for $\Sigma$ hold; see (\ref{ass})).  We let $S_0^\Sigma$ be the mod $p$ special fiber. Write $\cS^\Sigma$ for the $p$-adic formal scheme defined by completing $S^\Sigma$ along  $S_0^\Sigma$ and  $\cS^{\Sigma, \rm ad} $ for its adic generic fiber (in the sense of Huber). We denote by $S^{\rm ad}\subset \cS^{\Sigma, \rm ad}$ the open, adic subspace defined by the scheme $S_{\QQ_p}$.

We also let $\mathrm{F\ell}_{G,\mu^{-1}}$ be the flag variety over $\WW(\kappa)$ classifying parabolic subgroups of $G_{\ZZ_p}$ of type $\mu^{-1}$. Set $\mathrm{F\ell}_{G,\mu^{-1},0}$ to be the special fiber over $\kappa$, $\cF\ell_{G,\mu^{-1}}$ to be the formal scheme defined by the completion of $\mathrm{F\ell}_{G,\mu^{-1}}$ along $\mathrm{F\ell}_{G,\mu^{-1},0}$ and $\cF\ell_{G,\mu^{-1}}^{\rm ad}$ to be the generic adic fiber of $\cF\ell_{G,\mu^{-1}}$.

Consider the perfectoid tower $\pi\colon S^{\rm ad}(p^\infty)\to S^{\rm ad}$ and the Hodge-Tate period map $$\pi_{\rm HT}\colon S^{\rm ad}(p^\infty) \longrightarrow \mathcal{F}\ell_{G,\mu^ {-1}},$$defined by \cite[Thm. 2.1.3]{CS}. We have the following diagram of topological spaces:

$$\begin{matrix} & & S^{\rm ad}(p^\infty) & & \cr
 & \swarrow & & \searrow & \cr
\cS^{\Sigma, \rm ad} & & & & \cF\ell_{G,\mu^ {-1}}^{\rm ad}\cr
\downarrow & & & & \downarrow\cr
S_0^\Sigma & & & & \mathrm{F}\ell_{G,\mu^{-1},0},\cr
\end{matrix}$$where the bottom vertical arrows are the specialization maps ${\rm sp}\colon \cS^{\Sigma, \rm ad}\to S_0^\Sigma$ and  ${\rm sp}\colon \cF\ell_{G,\mu^{-1}}^{\rm ad}\to \mathrm{F}\ell_{G,\mu^{-1},0}$. We write $q_1\colon  S^{\rm ad}(p^\infty) \to S_0^\Sigma$ and $q_2\colon S^{\rm ad}(p^\infty) \to \mathrm{F}\ell_{G,\mu^{-1},0}$ for the two composite maps. These are the two mod $p$ period maps of the title.

The varieties $S_0^\Sigma$ and $\mathrm{F\ell}_{G,\mu^{-1},0} $ are endowed with natural stratifications. On $S_0^\Sigma$ we have the so called \emph{Ekedahl--Oort stratification} introduced by T.~Ekedhal and F.~Oort in the Siegel case, by  C.~Zhang \cite{Chao} in the general setting of the special fiber of Shimura varieties of Hodge type, such as  $S_0$, and then extended to toroidal compactiofications, such as $S_0^\Sigma$, by W.~Goldring and J.-S.~Koskivirta  \cite{GK}. On the other hand $\mathrm{F\ell}_{G,\mu^{-1},0} $ is endowed with the so called \emph{ fine Deligne-Lusztig stratifications}, defined by X.~He \cite{He}. Thanks to results of  R.~Pink., T.~Wedhorn and P.~Ziegler in \cite{AZD} one knows that:\smallskip

$\bullet$ the strata of these two stratifications   are classified by subsets ${^{I_+}}W$ (in the EO case) and ${^{I_-}}W$ (in the DL case)  of the Weyl group $W$ of $G$. Here, $I_+$ and $I_-$ are the types of the parabolic subgroups $P_{\mu^\pm}$ of $G$ defined by  $\mu$ and $\mu^{-1}$ respectively and   ${^{I_{\pm}}}W$ is the set of Konstant representatives of the quotient sets $W/W_{I^\pm}$ of $W$ by the Weyl group of the standard Levi subgroups of  $P_{I_{\pm}}$; \smallskip

$\bullet$ there exists eplicit order  relations  $\preccurlyeq_{\rm EO}$ on ${^{I_+}}W$ and $\preccurlyeq_{\rm DL}$ on ${^{I_-}}W$  corresponding to the closure relations on the associated Ekedahl-Oort  stratification on  $S_0^\Sigma$ and Deligne-Lusztig stratification on $\mathrm{F\ell}_{G,\mu^{-1},0} $ respectively.\smallskip

$\bullet$  the stratum associated to  $w\in {^{I_{\pm}}W}$ is equidimensional of dimension equal to the length $\ell(w)$ of $w$ in the Weyl group $W$.  

\smallskip

For $w\in {^{I_{\pm}}W}$ we let $C_{\rm EO}(w)$, resp. $C_{\rm DL}(w)$ be the corresponding stratum on  $S_0^\Sigma$, resp.~on $\mathrm{F\ell}_{G,\mu^{-1},0} $.
The natural question we address in this paper is the relation between these two stratifications when pulled-back to $S^{\rm ad}(p^\infty)$. Our main result is the following.  In  Lemma \ref{lemma:dualityonWJ} we prove that the map $$\sigma_0\colon {^{I_+}}W \to {^{I_-}}W, \qquad w\mapsto w_0^{I_+} w,$$given by left multiplication by a suitable element $w_0^{I_+}\in W$, is an order reversing  bijection and $\ell\bigl(\sigma_0(w)\bigr)=\ell\bigl(w_0^{I_+})-\ell(w)$ for every $w\in  {^{I_+}}W$. Here the order relations are  $\preccurlyeq_{\rm EO}$ and $\preccurlyeq_{\rm DL}$ respectively. Then:

\begin{theorem}\label{thm:main} Assume that $p\geq 5$ and that $q_1^{-1}\bigl({C_{\rm EO}(w')}\bigr) \cap  q_2^{-1}\bigl(C_{\rm DL}(w)\bigr) $ is non-empty  for some $w'\in {^{I_+}}W$ and $w\in {^{I_-}}W$. Then $w\preccurlyeq_{\rm DL} \sigma_0(w')$ (equivalently $w'\preccurlyeq_{\rm EO} \sigma_0(w)$).

\end{theorem}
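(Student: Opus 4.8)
The plan is to reduce the statement to a local, point-by-point comparison of the two stratifications on the perfectoid cover, using the functoriality of the Hodge–Tate period map together with the $p$-adic (crystalline/$F$-zip) description of both stratifications. Fix a geometric point $x\in q_1^{-1}\bigl(C_{\rm EO}(w')\bigr)\cap q_2^{-1}\bigl(C_{\rm DL}(w)\bigr)$, whose existence is assumed. Such an $x$ lies over a point $\bar y=q_1(x)$ in the EO stratum $C_{\rm EO}(w')\subset S_0^\Sigma$ and over a point $\bar z=q_2(x)=\mathrm{sp}\bigl(\pi_{\rm HT}(x)\bigr)$ in the DL stratum $C_{\rm DL}(w)\subset \mathrm{F\ell}_{G,\mu^{-1},0}$. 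The first step is to extract from $x$ the relevant linear-algebra datum: the $p$-divisible group (or, after the Hodge-type embedding, the abelian variety) $A_{\bar y}$ with its crystalline Dieudonné module $M$ equipped with Frobenius $F$, Hodge filtration, and the $G$-structure coming from the tensors defining $S$ inside the Siegel variety. The EO stratum of $\bar y$ is the $G$-isomorphism class of the associated $F$-zip (equivalently, the relative position datum $w'\in {}^{I_+}W$), by Zhang \cite{Chao} and \cite{GK}; this is intrinsic to the mod-$p$ fiber. The second step is to identify what the specialization of $\pi_{\rm HT}(x)$ records: by the construction of \cite{CS}, $\pi_{\rm HT}(x)$ is the Hodge–Tate filtration on the (rational) Tate module of $A_{\bar y}$ viewed as a point of the flag variety, and its specialization $\bar z$ is the position of this filtration relative to the conjugate/Frobenius filtration in the special fiber. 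By the theory of \cite{AZD}, the DL stratum of $\bar z$ is then governed by the relative position $w\in {}^{I_-}W$ of two parabolics attached to $M$: the one coming from the Hodge (or its Frobenius twist) filtration and the one fixed by the choice of Borel.

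The heart of the argument is the following: both $w'$ and $w$ are computed from the \emph{same} Dieudonné module $(M,F)$ with $G$-structure at $\bar y$ — $w'$ measures the relative position of the Hodge filtration and its Frobenius-conjugate (the EO/$F$-zip invariant), while $w$ measures the relative position of the Hodge-Tate filtration and the canonical filtration. The third step is therefore to run a Bruhat-type bound: in a reductive group, if $P_1,P_2,P_3$ are parabolics of prescribed types and one knows the relative position of $(P_1,P_2)$ and of $(P_2,P_3)$, then the relative position of $(P_1,P_3)$ is constrained, and by composing with the involution $w\mapsto w_0^{I_+}w$ one gets precisely an inequality of the form $w\preccurlyeq_{\rm DL}\sigma_0(w')$. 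Concretely, I would choose a common framing (a Borel $B\subset G_{\overline{\FF}_p}$ and a maximal torus) adapted to the Dieudonné module at $\bar y$, write both filtrations as translates of standard parabolics, and observe that the two stratifications see the "two sides" of the same flag, which is exactly the source of the element $w_0^{I_+}$: the Frobenius-conjugate of the Hodge parabolic is, up to the relative position recorded by $w'$, the "opposite" of the Hodge-Tate parabolic, and the opposition involution on types $I_+\to I_-$ is implemented by $w_0^{I_+}$. This is precisely the content extracted in Lemma \ref{lemma:dualityonWJ}.

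The main obstacle I expect is \emph{making the identification of $\bar z$'s DL-invariant with the Frobenius-conjugate Hodge filtration rigorous}, i.e. the precise dictionary between the Hodge–Tate period (an object defined on the perfectoid/adic side via $p$-adic Hodge theory, as in \cite{CS}) and a filtration on the crystalline Dieudonné module in the special fiber whose relative position is what the fine Deligne–Lusztig stratification of \cite{He}, \cite{AZD} measures. This requires compatibility between the Hodge–Tate filtration on $T_pA\otimes \mathcal{O}_{\bC_p}/p$ and the conjugate filtration on the de Rham / crystalline realization modulo $p$ — essentially a compatibility of specialization with the two period maps, which is where the hypothesis $p\geq 5$ (ensuring the integral comparison theorems and Kisin's and \cite{MP}'s constructions behave well, and that the $F$-zip formalism applies) enters. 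Once this dictionary is in place, the group-theoretic inequality is formal, following from standard facts about Bruhat order composition together with Lemma \ref{lemma:dualityonWJ}, and the equivalence $w'\preccurlyeq_{\rm EO}\sigma_0(w)$ follows immediately from the order-reversing property of $\sigma_0$ stated there.
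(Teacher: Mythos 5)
There is a genuine gap at the heart of your argument. You assert that both invariants are ``computed from the \emph{same} Dieudonn\'e module $(M,F)$ with $G$-structure at $\bar y$,'' with $w$ read off as the relative position of the Hodge--Tate filtration against a filtration intrinsic to $M$. This cannot be right: the mod $p$ Hodge--Tate period $\bar z={\rm sp}(\pi_{\rm HT}(x))$ depends on the point $x$ of the perfectoid cover (i.e.\ on the trivialization of the Tate module), not only on $\bar y=q_1(x)$, and the theorem is only an \emph{inequality} precisely for this reason --- over a fixed small EO stratum the fibers of $q_1$ meet many DL strata (compare Corollary \ref{cor:nonempty} and the surjectivity of $q_2$ in Proposition \ref{prop:HTonto}). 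Consequently the ``formal Bruhat-type bound'' on relative positions of three parabolics attached to $M$ has no input: the parabolic recording $w$ does not live on the mod $p$ Dieudonn\'e module, and the compatibility you propose as the fix (Hodge--Tate filtration on $T_pA\otimes\cO_{\CC_p}/p$ versus the conjugate filtration mod $p$) is not a true statement strong enough to determine $\bar z$ from $\bar y$, nor is it where $p\geq 5$ enters.

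What the paper actually does at this point is quite different and is the technical core. One first factors $q_1,q_2$ through the Pilloni--Stroh formal models and the map $\pi_{\rm HT,0}$ (Corollary \ref{cor:piHT0}), then chooses a characteristic-zero lift of the point so that the crystalline Frobenius matrix is, modulo the \emph{integral} zip group over $\WW(k)$, literally $\widetilde w\widetilde z^{-1}$ (Proposition \ref{prop:LieE}); this step uses the smoothness of $\zeta^\Sigma$ (Theorem \ref{thm:mainthm2}), which plays no role in your proposal. Both invariants are then encoded in a single comparison matrix $\tau$ for $\alpha_{\rm st}$ over Breuil's ring $\widehat A_{\rm st}$, via the identity $\tau\,\widetilde h(\widetilde h^{(p)})^{-1}\bigl((u_+\tau)^{(p)}\bigr)^{-1}=w\widetilde z^{-1}r_-$, and the whole difficulty is Proposition \ref{prop:decompostau}: a simultaneous Borel--times--integral-Levi decomposition of $\tau$ \emph{and} $\tau^{(p)}$ over the auxiliary rings $A^{\rm st}_{r,s,\gamma}$, with matching Levi reductions $\overline\ell'=\overline\ell^{(p)}$. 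The hypothesis $p\geq 5$ is forced by the precise numerical condition $\tfrac{1}{p-1}<r$ and $(p+1)r<p-1$ needed to apply Frobenius to these rings, not by a general appeal to integral comparison theory. Only after this does the Bruhat-order inequality drop out formally and get translated by Lemma \ref{lemma:dualityonWJ}, which is the one ingredient your sketch identifies correctly.
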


This should be seen as the discrete analogue of the phenomenon pointed out in \cite{CS}, where a stratification (in a loose sense) of $\cF\ell_{G,\mu^ {-1}}^{\rm ad}$ is defined so that its inverse image via $\pi_{\rm HT}$  coincides with the inverse image of the Newton polygon stratification on $S^{\rm ad}$ but the closure relation is  opposite to the one of the Newton stratification. In loc. cit., the classification of vector bundles on the Fargues-Fontaine curve $\cX_{\CC_p^\flat}$ plays a crucial role. Recall that in Fontaines' theory we have a classical period ring $A_{\rm inf}$ and a map $A_{\rm inf}\to \cO_{\CC_p}$ with kernel a principal ideal, of which we fix a generator $\omega$. The Fargues-Fontaine curve is the quotient of ${\rm Spa}\bigl(A_{\rm inf}[p^{-1}\omega^{-1}],A_{\rm inf}\bigr)$ with respect to Frobenius. In our case we need to understand what happens at the non-analytic point of ${\rm Spa}\bigl(A_{\rm inf},A_{\rm inf}\bigr)$ defined by the vanishing of the ideal $(p,\omega)$. The assumption that $p\geq 5$ comes from the technical requirement that a certain open of $\cX_{\CC_p^\flat}$ is non-empty.

\

The EO stratification is constructed by Goldring and Koskivirta using a map of stacks $\zeta^\Sigma\colon S_0^\Sigma\to H_{\overline{\FF}_p}-\hbox{\rm Zip}^\chi$ to the stack of so called  $H$-Zips (see \S\ref{sec:Zips} for this notion) where $H$ is  the mod $p$ reduction of a model of $G$ over $\ZZ_p$ and $\chi$ is the cocharacter of $H$ defined by the mod $p$ reduction of the Hodge cocharacter $\mu$.  A second result, conjectured by Wedhorn and Ziegler in \cite{WZ},  is Theorem \ref{theorem:zetasigmasmooth} stating that 

\begin{theorem}\label{thm:mainthm2} The map $\zeta^\Sigma$ is a smooth morphism of stacks.
\end{theorem}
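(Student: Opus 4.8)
The plan is to reduce the smoothness of the stack morphism $\zeta^\Sigma$ to that of an honest morphism of smooth schemes, to establish flatness of the latter by a dimension count fed by the known description of the strata and of the zip stack, and finally to verify that its fibres are smooth --- which is classical over the open Shimura variety and requires a boundary analysis in general. We work over $\overline{\FF}_p$, to which we base change $S_0^\Sigma$; this changes nothing for the question. Recall from \cite{AZD} that $H_{\overline{\FF}_p}-\mathrm{Zip}^\chi$ is the quotient stack $[E_\chi\backslash H]$ for the zip group $E_\chi$, a smooth affine group with $\dim E_\chi=\dim H$, so the stack is smooth of dimension $0$. Take the tautological smooth surjective atlas $a\colon H\to [E_\chi\backslash H]$, form $\widetilde S:=S_0^\Sigma\times_{[E_\chi\backslash H],\,a}H$, and let $\widetilde\zeta\colon\widetilde S\to H$ be the projection; since $a$ is smooth and surjective, $\zeta^\Sigma$ is smooth if and only if $\widetilde\zeta$ is. Here $\widetilde S$ is smooth (being smooth over $S_0^\Sigma$, which is smooth) of dimension $\dim S_0^\Sigma+\dim E_\chi$, and $H$ is a smooth group scheme, hence regular; thus $\widetilde\zeta$ is a morphism of smooth schemes with $\dim\widetilde S-\dim H=\dim S_0^\Sigma$, and it suffices to prove that it is flat with smooth fibres.

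For flatness we invoke the local criterion for flatness (``miracle flatness''): as $\widetilde S$ is Cohen--Macaulay and $H$ is regular, $\widetilde\zeta$ is flat provided every non-empty fibre is equidimensional of dimension $\dim\widetilde S-\dim H=\dim S_0^\Sigma$. Over $\overline{\kappa(h)}$ the fibre $\widetilde\zeta^{-1}(h)$ over a point $h$ in the $E_\chi$-orbit indexed by $w\in{}^{I_+}W$ is a torsor under $\mathrm{Stab}_{E_\chi}(h)$ over the Ekedahl--Oort stratum $C_{\rm EO}(w)\subset S_0^\Sigma$; by the properties of the strata recalled in the introduction $C_{\rm EO}(w)$ is equidimensional of dimension $\ell(w)$, while the description of the $E_\chi$-orbits on $H$ in \cite{AZD} gives $\dim\mathrm{Stab}_{E_\chi}(h)=\dim S_0^\Sigma-\ell(w)$. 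Hence every non-empty $\widetilde\zeta^{-1}(h)$ is equidimensional of dimension $\dim S_0^\Sigma$, and $\widetilde\zeta$ is flat. (No non-emptiness statement about the strata is used: empty fibres do not affect the criterion.)

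It remains to show that the fibres of $\widetilde\zeta$ are smooth; since the stabilizers $\mathrm{Stab}_{E_\chi}(h)$ are smooth (\cite{AZD}) this is equivalent to the smoothness of the strata $C_{\rm EO}(w)$ of $S_0^\Sigma$, equivalently to the surjectivity of $\zeta^\Sigma$ on tangent spaces at every geometric point. Over the dense open $S_0$ this is the smoothness of $\zeta=\zeta^\Sigma|_{S_0}$ proved by Zhang \cite{Chao} and Goldring--Koskivirta \cite{GK}: there the universal $H$-zip is assembled from the de Rham realisation of the universal abelian scheme with its Hodge filtration, conjugate filtration and Cartier isomorphism, and Grothendieck--Messing/Serre--Tate theory identifies the deformations of a point of $S_0$ surjectively with those of its $H$-zip. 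To treat the boundary one uses the toroidal boundary charts of \cite{MP} (as in \cite{GK}): formally along a boundary stratum, $S_0^\Sigma$ is a smooth torus-embedding fibration over a stratum which is, up to finite covers, a torus-torsor over an abelian scheme over a smaller Shimura variety $S_{0,h}$, and the $H$-zip on the chart is the pullback, along that smooth projection, of the $H$-zip of $S_{0,h}$ transported along the boundary embedding $G_h\hookrightarrow G$; thus $\zeta^\Sigma$ restricted to the chart factors as a smooth projection, followed by $\zeta_h$ (smooth by induction on dimension), followed by the morphism of zip stacks induced by $G_h\hookrightarrow G$. Granting smoothness of this last morphism, $\widetilde\zeta$ is flat with smooth fibres, hence smooth, and therefore so is $\zeta^\Sigma$.

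The step I expect to be the main obstacle is precisely this last one: identifying the degenerate de Rham $F$-zip of Mumford's semi-abelian construction near the boundary with the $F$-zip built from the abelian part of the boundary component together with the toric and \'etale contributions, and checking that the cocharacter $\chi$, the parabolics $P_{\mu^\pm}$ and the zip group $E_\chi$ behave compatibly under $G_h\hookrightarrow G$, so that the induced morphism of zip stacks is smooth. An alternative route avoiding the boundary charts is a direct verification of the infinitesimal lifting criterion, using the logarithmic deformation theory of semi-abelian schemes to show that deformations of a boundary point of $S_0^\Sigma$ surject onto those of the associated $H$-zip; the essential difficulty is the same. By contrast, the reduction to a morphism of smooth schemes and the flatness argument are formal.
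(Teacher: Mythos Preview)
Your boundary analysis in the final two paragraphs is on the right track and matches the paper's approach closely: the paper proves surjectivity on tangent spaces at a boundary point by factoring $\zeta^\Sigma$, formally locally, as the composite of the smooth projection to the smaller Shimura variety $S_0'$, the zip map $\zeta'$ for $S_0'$ (smooth by Zhang), and a morphism of zip stacks $H'\text{-}\mathrm{Zip}^{\chi'} \to H\text{-}\mathrm{Zip}^{\chi}$. The paper's Lemma~\ref{lemma:gammazipsmooth} proves this last morphism is smooth, which is exactly the step you flag as the main obstacle. So the substantive content of your proposal agrees with the paper.

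However, the miracle-flatness wrapper you place around this is both unnecessary and circular. Unnecessary, because once source and target are smooth, surjectivity on tangent spaces at every closed point already gives smoothness of the morphism directly; this is the paper's one-line reduction at the start of \S3, and it makes a separate flatness argument redundant. Circular, because your fibre-dimension count uses that $C_{\rm EO}(w)\subset S_0^\Sigma$ is equidimensional of dimension $\ell(w)$: this is known on the open $S_0$ via Zhang's smoothness of $\zeta$, but on the boundary it is a \emph{consequence} of the smoothness of $\zeta^\Sigma$ you are trying to prove, not an available input. The equidimensionality statement in the introduction presupposes Theorem~\ref{thm:mainthm2}. (There is also a smaller issue: smoothness of the stabilisers $\mathrm{Stab}_{E_\chi}(h)$ in positive characteristic is not automatic from the smoothness of the orbits and is not, as far as I can see, asserted in \cite{AZD}.) Drop the flatness detour and argue tangent-space surjectivity directly; what remains is then exactly the paper's proof.
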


\

Theorems \ref{thm:main} and \ref{thm:mainthm2}   allow to transfer questions on one stratification to questions on the other stratification. For example,  we prove the following

\begin{corollary}\label{cor:nonempty} Assume that $p\geq 5$. Then, for every $w'\in {^{I_+}}W$ and every $w\in  {^{I_-}}W$, the sets  $q_1^{-1}\bigl({C_{\rm EO}(w')}\bigr)$ and $q_2^{-1}\bigl(C_{\rm DL}(w)\bigr)$ are  non-empty. In fact, $q_1^{-1}\bigl({C_{\rm EO}(w')}\cap S_0\bigr)$ is also non-empty.

\end{corollary}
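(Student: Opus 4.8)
The plan is to deduce non-emptiness of the pulled-back strata from non-emptiness of a single, well-chosen pair of strata together with Theorem \ref{thm:main}, playing the two closure relations against each other. First I would observe that by Theorem \ref{thm:main}, if $q_1^{-1}\bigl(C_{\rm EO}(w')\bigr)\cap q_2^{-1}\bigl(C_{\rm DL}(w)\bigr)$ is non-empty, then $w\preccurlyeq_{\rm DL}\sigma_0(w')$; conversely, the source $S^{\rm ad}(p^\infty)$ is covered by the sets $q_2^{-1}\bigl(C_{\rm DL}(w)\bigr)$, so for fixed $w'$ the fibre $q_1^{-1}\bigl(C_{\rm EO}(w')\bigr)$ decomposes as a union over those $w$ with $w\preccurlyeq_{\rm DL}\sigma_0(w')$ of the intersections with $q_2^{-1}\bigl(C_{\rm DL}(w)\bigr)$. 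Thus the two questions — non-emptiness of all $q_1^{-1}\bigl(C_{\rm EO}(w')\bigr)$ and of all $q_2^{-1}\bigl(C_{\rm DL}(w)\bigr)$ — are linked through the order-reversing bijection $\sigma_0$, and it suffices to produce enough non-empty intersections to force both families to be entirely non-empty.

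The key input is that $q_2\colon S^{\rm ad}(p^\infty)\to \mathrm{F}\ell_{G,\mu^{-1},0}$, being obtained from the Hodge–Tate period map $\pi_{\rm HT}$ composed with the specialization map, is surjective: this is essentially \cite[Thm.~2.1.3]{CS} (surjectivity of $\pi_{\rm HT}$, hence surjectivity onto $\cF\ell^{\rm ad}_{G,\mu^{-1}}$, combined with surjectivity of the specialization map onto the special fibre). Hence every $q_2^{-1}\bigl(C_{\rm DL}(w)\bigr)$ is automatically non-empty for $w\in{^{I_-}}W$, giving the second assertion of the Corollary directly. For the first assertion, I would fix $w'\in{^{I_+}}W$ and consider the element $w:=\sigma_0(w')\in{^{I_-}}W$; this is the unique maximal element of $\{w : w\preccurlyeq_{\rm DL}\sigma_0(w')\}$. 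Since $q_2^{-1}\bigl(C_{\rm DL}(\sigma_0(w'))\bigr)$ is non-empty, I would argue that it must meet $q_1^{-1}\bigl(C_{\rm EO}(w')\bigr)$: indeed a point $x$ in $q_2^{-1}\bigl(C_{\rm DL}(\sigma_0(w'))\bigr)$ lies in $q_1^{-1}\bigl(C_{\rm EO}(w'')\bigr)$ for some $w''\in{^{I_+}}W$, and by Theorem \ref{thm:main} applied to that point, $\sigma_0(w')\preccurlyeq_{\rm DL}\sigma_0(w'')$, which by the order-reversing property of $\sigma_0$ forces $w''\preccurlyeq_{\rm EO}w'$; combined with an argument that the open DL stratum must hit the open-dense (within its closure) EO stratum — via a dimension count using that each $C_{\rm DL}(w)$ has dimension $\ell(w)$ and each $C_{\rm EO}(w')$ has dimension $\ell(w')$ with $\ell(\sigma_0(w'))=\ell(w_0^{I_+})-\ell(w')$ — one gets $w''=w'$. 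This yields $q_1^{-1}\bigl(C_{\rm EO}(w')\bigr)\neq\emptyset$, and the refinement over $S_0$ (rather than $S_0^\Sigma$) follows because the boundary $S_0^\Sigma\setminus S_0$ has lower dimension and the fine Deligne–Lusztig stratum of maximal dimension in the relevant fibre cannot be swallowed by it — alternatively, because $q_1$ restricted to $q_1^{-1}(S_0)$ is still surjective onto the open part of each EO stratum by the same period-map surjectivity applied over the open Shimura variety $S^{\rm ad}$.

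The main obstacle I anticipate is the dimension-matching step that upgrades ``$w''\preccurlyeq_{\rm EO}w'$ for some $w''$'' to ``$w''=w'$'': a priori the non-empty intersection $q_2^{-1}\bigl(C_{\rm DL}(\sigma_0(w'))\bigr)\cap q_1^{-1}\bigl(C_{\rm EO}(w'')\bigr)$ could sit over a proper EO sub-stratum. To handle this I would use that $\pi_{\rm HT}$ (and the specialization maps) interact well with the stratifications — concretely, that $q_1$ and $q_2$ are (up to the perfectoid issues controlled by the $p\geq 5$ hypothesis) ``complementary'' in dimension, so that a generic point of $q_2^{-1}\bigl(C_{\rm DL}(\sigma_0(w'))\bigr)$ necessarily lands in the largest available EO stratum compatible with Theorem \ref{thm:main}; here the smoothness statement of Theorem \ref{thm:mainthm2} for $\zeta^\Sigma$ can be invoked to control the local structure of $q_1$ near such a point and to guarantee that all EO strata allowed by the closure relation actually occur. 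Once the top pair $(w',\sigma_0(w'))$ is shown to give a non-empty intersection, the full statement for all $w'$ and all $w$ follows formally from the two surjectivities and the bijection $\sigma_0$.
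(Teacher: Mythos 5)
Your overall skeleton (surjectivity of $q_2$ plus Theorem \ref{thm:main}) is the right starting point, but two of your key steps are not actually supported. First, the surjectivity of $q_2$ is not ``essentially \cite[Thm.~2.1.3]{CS}'': that theorem only constructs $\pi_{\rm HT}$ and says nothing about its image, and surjectivity of the mod $p$ period map is precisely the new content of Proposition \ref{prop:HTonto}, which the paper proves by elementary projective geometry on the Pilloni--Stroh formal models (the base-point-free sheaf $\det\Omega^{\rm mod}$, a Stein factorization, and a dimension count identifying the image with the irreducible $d$-dimensional flag variety $P_-\backslash H_\kappa$). Without this, or an equivalent argument, the second assertion of the Corollary and everything you build on it is unsupported.

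Second, the upgrade from ``$w''\preccurlyeq_{\rm EO} w'$'' to ``$w''=w'$'' is a genuine gap: nothing in a ``dimension count'' or genericity heuristic forces a point of $q_2^{-1}\bigl(C_{\rm DL}(\sigma_0(w'))\bigr)$ to sit over the largest EO stratum permitted by Theorem \ref{thm:main}; the statement that this particular intersection is non-empty for \emph{every} $w'$ is strictly stronger than the Corollary and is not proved in the paper either. The paper sidesteps this by running your argument only for the maximal element $w={^{I_-}}w_0$: there antisymmetry of $\preccurlyeq_{\rm DL}$ forces $\sigma_0(w'')$ to be maximal, i.e.\ $w''=\mathbf{1}$, so one only learns that the zero-dimensional stratum $C_{\rm EO}(\mathbf{1})$ is non-empty; then Theorem \ref{thm:mainthm2} (smoothness, hence openness, of $\zeta^\Sigma$) together with the fact that $\mathbf{1}$ is the unique closed point of $\vert H\hbox{-{\rm Zip}}^\chi\vert$ shows the image of $\zeta^\Sigma$ is all of the zip stack, and surjectivity of $q_1={\rm sp}\circ\pi$ gives the first assertion for all $w'$. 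Your justification of the refinement over $S_0$ fails for a similar reason: the boundary $S_0^\Sigma\setminus S_0$ has codimension one, so ``lower dimension'' does not prevent a small EO stratum from lying entirely inside it; the paper instead uses Corollary \ref{cor:pointnotintheimageofzetaSigma} (the image of the boundary under $\zeta^\Sigma$ misses the closed point of the zip stack, hence $C_{\rm EO}(\mathbf{1})\subset S_0$) and then the openness of Zhang's $\zeta$ on $S_0$ to see that every EO stratum meets $S_0$ before applying surjectivity of $q_1$.
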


The result in the EO case has been announced by  C.-F.~Yu \cite{Yu} based on work of D.U.~Lee \cite{Lee} and M.~Kisin \cite{Kisin2}. We prove this result differently by showing  first in Proposition \ref{prop:HTonto} the stronger statement that the map $q_2$ is surjective, and hence that every DL stratum is non-empty. This is elementary  projective geometry. It implies, in particular, that $q_2^{-1}\bigl(C_{\rm DL}({^{I_-}} w_0)\bigr)$ is non-empty, where the element $w_0^{I_+}={^{I_-}} w_0\in {^{I_-}}W$ is the element of maximal length. Due to Theorem \ref{thm:main}  this implies that the zero dimensional stratum $C_{\rm EO}(\mathbf{1})$, corresponding to identity element $\mathbf{1}$,  is non-empty.  We deduce from Theorem \ref{thm:mainthm2}  that the EO stratification is closed under generalization for the order $\preccurlyeq_{\rm EO}$ and, hence, all the EO strata are non-empty as $C_{\rm EO}(\mathbf{1})$ is non-empty. As the map $q_1$ is the composite of the surjective maps $\pi\colon S^{\rm ad}(p^\infty) \to S^{\rm ad} $ and ${\rm sp}\colon S^{\rm ad}\to S_0^\Sigma$, it is also surjective and the first part of the Corollary follows.

In fact, we show in Corollary \ref{cor:pointnotintheimageofzetaSigma} that the stratum $C_{\rm EO}(\mathbf{1})$ is fully contained in $S_0$ verifying   Condition 6.4.2 in \cite{GK}. This implies the second claim of the Corollary.

\section{Algebraic Zip Data}\label{sec:AZD}

We recall the notion of Algebraic Zip Data intoroduced in \cite{AZD}. We follow the notation and conventions of \cite[\S 1.2]{GK}. Let $H$ be a connected reductive group defined over the prime field $\FF_p$. Fix an algebraic closure $k$ of $\FF_p$ and a cocharacter $\chi\colon \GG_{m,k}\to H_k$ over $k$. The pair $(H,\chi)$ is called a cocharacter pair in loc. cit. We denote by $P=P_\chi$ the parabolic subgroup of $H_k$ defined by $\chi$: it is characterized by the fact that $\Lie P\subset \Lie H_k$ is the sum of the non-negative weight spaces for the adjoint action of $\GG_{m,k}$ via $\chi$.

As $H$ is defined over a finite field, there exists a Borel subgroup $B\subset H$, with maximal torus $T$, defined over $\FF_p$. We assume that $B_k\subset P$; this is Assumption (1.2.3) of \cite{GK}. Possibly after conjugating $\chi$ by an element of $H(k)$, this holds. We let $W=W(H_k,T_k)$ be the Weyl group of $H_k$  with respect to $T_k$ and we let $\Delta\subset W$ be the subset of simple reflections  defined by $B_k$. Let $I\subset \Delta$ be the type of $P$. It is the subset of simple reflections generating the Weyl group $W_L$ of the standard  Levi subgroup $L\subset P$, the Levi subgroup containing $T_k$. We then have two cosets representatives $^I W$ and $W^I$ in $W$ of the quotient set $W_I\backslash W$, resp. $W/W_I$: for every coset  $W_Iw\in W_I\backslash W$ the corresponding element  $w'\in{^IW}$ is the unique element $w'\in W_I w$ of shortest length (and similarly for $W^I$ and $W/W_I$).

With the notation and assumptions above, we recall the follwing definition \cite[Def. 3.1]{AZD}.

\begin{definition}\label{AZD} An algebraic zip datum for the cocharacter pair $(H,\chi)$ is a triple $\mathcal{Z}:=(P,Q,\varphi)$ where $P$ is the parabolic subgroup of $H_k$ defined by $\chi$,  $Q$ is another parabolic subgroup of $H_k$ and $\varphi\colon P/R_uP\to Q/ R_u Q$ is an isogeny of the reductive quotients. The group $$E_\mathcal{Z}:=\bigl\{(p,q)\in P\times Q \vert \varphi(\pi_P(p))=\pi_Q(q) \bigr\}$$is called the zip group associated to $\mathcal{Z}$. It acts on $H_k$ on the left as follows $$E_\mathcal{Z} \times H_k \to H_k,\quad (p,q) \cdot g= p g q^{-1} $$

\end{definition}

Thanks to \cite[Prop. 3.7]{AZD} there exists $g\in H(k)$ such that 

\begin{itemize}

\item[a.] $^g B_k\subset Q$,  
\item[b.] $\varphi(\pi_P(B_k))=\pi_Q({^g B_k})$,
\item[c.] $\varphi(\pi_P(T_k))=\pi_Q({^g T_k})$. 

\end{itemize} 

Let $L\subset P$ and $M\subset Q$ be the stantard Levi subgroups defined by $T_k \subset P$ and ${^g T_k}\subset Q$ respectively. 
The isogeny $\varphi$ defines  an isogeny $\varphi\colon L\to M$ such that $\varphi (B_k\cap L)={^g B_k}\cap M$ and $\varphi (T_k)={^g T_k}$. If $J$ is the type of $Q$ this induces an isomorphism 

\begin{equation}\label{psi} \psi\colon (W_I,I)\stackrel{\cong}{\longrightarrow } (W_J,J)\end{equation}
 
of the Weyl groups $W_I\to W_J$ suh that $\Psi(I)=J$.

We also fix representatives $\widetilde{w}\in {\rm N}_{H_k}(T_k)$ for $w\in W ={\rm N}_{H_k}(T_k)/{\rm C}_{H_k}(T_k)$ such that $\widetilde{w_1 w_2}=\widetilde{w}_1 \widetilde{w}_2$ if $\ell(w_1w_2)=\ell(w_1) \ell(w_2)$. This can be done as explained in \cite[\S 2.3]{AZD}.

We assume that $\mathcal{Z}$ is {\it orbitally finite}, that is the number of orbits for the action of $E_{\mathcal{Z}}$ on $H_k$ is finite. This holds, for example, if the differential of the isogeny $\varphi$ at $1$ vanishes. For every $w\in W$ we write $H_w:=E_{\mathcal{Z}} \cdot (\widetilde{w} g^{-1})\subset H_k$ for the $E_{\mathcal{Z}}$-orbit defined by $\widetilde{w} g^{-1}$. Then, \cite[Thm. 7.5]{AZD} states that:

\begin{theorem}\label{thmIW} The map $${^I W} \longrightarrow\left\{ E_{\mathcal{Z}}-\hbox{{\rm orbits in }} H_k\right\},\qquad  w \mapsto H_w$$is a bijection. Morever,

\begin{itemize}

\item[1.] for any $w\in {^I W}$ the corresponding orbit $H_w$ is a locally closed, smooth subvariety of $H_k$ of dimension $\dim P+\ell(w)$.

\item[2.] for any $w$, $w'\in {^I W}$ we have that $H_{w'} \subset \overline{H}_w$, and we write $w' \preccurlyeq w$, if and only if there exists $y\in W_I$ such that $y w' \psi(y)^{-1} \leq w $ for the Bruhat order $\leq$ on $W$.

\end{itemize}

\end{theorem}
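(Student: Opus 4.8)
The plan is to deduce the statement from the Bruhat decomposition applied to a \emph{Borel-type} zip datum and then to transport the result along an inclusion of zip groups; this is the strategy carried out in \cite[\S\S 4--7]{AZD}. Since orbits of an algebraic group action on a variety are automatically smooth and locally closed (open in their closures), the content of the assertion is threefold: the bijection with ${^IW}$, the dimension formula, and the description of the closure relation. Throughout I fix the element $g\in H(k)$ with the properties (a), (b), (c) listed above and set $B':={^gB}\subset Q$ and $T':={^gT}\subset M$, so that $\varphi(B_k\cap L)=B'\cap M$ and $\varphi(T_k)=T'$; recall also the induced isomorphism $\psi\colon(W_I,I)\xrightarrow{\sim}(W_J,J)$ of \eqref{psi}.

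First I would analyse the Borel-type zip datum $\mathcal{Z}_B:=(B,B',\varphi|_{T})$, whose zip group is $E_B:=E_{\mathcal{Z}}\cap(B\times B')$. Using the Bruhat decomposition $H=\coprod_{w\in W}B\widetilde w B$, I would show that the $E_B$-orbit of $\widetilde w g^{-1}$ is exactly the translated cell $B\,\widetilde w\,B\,g^{-1}$. Writing elements of $B$ and of $B'$ as products of their torus and unipotent parts and conjugating by $g$, the computation of this orbit reduces to that of the orbit of $\widetilde w g^{-1}$ under the maximal torus part $E_T:=E_{\mathcal{Z}}\cap(T\times T')$ of $E_B$, and the latter equals $T\,\widetilde w\,g^{-1}$ because the morphism $T\to T$, $t\mapsto t\cdot{^w}\!\bigl(g^{-1}\varphi(t)g\bigr)^{-1}$, is an isogeny: its differential at the identity is an isomorphism under the hypothesis on $\varphi$ (it suffices that $d\varphi=0$), hence it is surjective on $k$-points. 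It then follows from the standard properties of Bruhat cells that $w\mapsto B\,\widetilde w\,B\,g^{-1}$ is a bijection from $W$ onto the set of $E_B$-orbits on $H$, that each such orbit is smooth of dimension $\dim B+\ell(w)$, and that its closure is $\bigcup_{w''\le w}B\,\widetilde{w''}\,B\,g^{-1}$.

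Next I would pass from $E_B$ to $E_{\mathcal{Z}}$. A short computation, using only that $R_uP\subset B$ and $g^{-1}(R_uQ)g\subset B$, shows that the normal subgroup $R_uP\times R_uQ$ of $E_{\mathcal{Z}}$ stabilizes each cell $B\,\widetilde w\,B\,g^{-1}$ set-theoretically. As $E_{\mathcal{Z}}=(R_uP\times R_uQ)\cdot E_L$ with $E_L:=E_{\mathcal{Z}}\cap(L\times M)\cong L$, and as each $E_{\mathcal{Z}}$-orbit is a union of $E_B$-orbits (because $E_B\subset E_{\mathcal{Z}}$), the way the $E_B$-orbits merge under $E_{\mathcal{Z}}$ is governed entirely by the reductive part $E_L$: conjugation by $L\subset P$ moves the relevant cells by the left action of $W_I$, conjugation by $M\subset Q$ by the right action of $W_J$, and the two are coupled through $\varphi$ by the isomorphism $\psi$, which is what produces the elements $y\,w\,\psi(y)^{-1}$ for $y\in W_I$. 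Carrying out the resulting analysis of cell incidences, one should obtain that each $E_{\mathcal{Z}}$-orbit contains a unique cell indexed by an element of ${^IW}$ --- giving the bijection $w\mapsto H_w$ --- that $H_w$ has the dimension of its dense $E_B$-orbit, which one computes to be $\dim P+\ell(w)$, and that $H_{w'}\subset\overline{H_w}$ if and only if $y\,w'\,\psi(y)^{-1}\le w$ for some $y\in W_I$, the closure of $H_w$ being the union of the closures of the cells it contains.

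The step I expect to be the main obstacle is this last cell-incidence bookkeeping: one must determine precisely which $E_B$-orbits lie in a prescribed $E_{\mathcal{Z}}$-orbit and then match the induced order on ${^IW}$ with the $\psi$-twisted Bruhat order, handling with care the asymmetry between the left action of $W_I$ and the right action of $W_J=\psi(W_I)$; this is where most of the work in \cite{AZD} goes. A secondary but genuinely essential point is the surjectivity of the twisted torus maps in the Borel step: this is exactly where the orbital finiteness hypothesis on $\varphi$ enters, and it fails without it --- for instance when $\varphi=\mathrm{id}$ the datum $\mathcal{Z}$ need not even be orbitally finite and the cell description collapses.
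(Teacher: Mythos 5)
First, a point of comparison: the paper does not prove Theorem \ref{thmIW} at all --- it is quoted from \cite[Thm.~7.5]{AZD} (with the change of conventions explained in the Remark that follows it), so the only meaningful comparison is with the proof in \cite{AZD}, which proceeds by an induction on nested zip data attached to Levi subgroups, not by the cell-merging scheme you propose.

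Your sketch has a genuine gap, and in fact a structural flaw, already at the Borel step. The group you call $E_B:=E_{\mathcal{Z}}\cap(B\times B')$ is not the zip group of the datum $(B,B',\varphi|_T)$: membership in $E_{\mathcal{Z}}$ forces the full Levi components to match under $\varphi$, not just the torus parts, so $E_B$ is strictly smaller than the Borel zip group as soon as $L$ is not a torus. Its dimension is $\dim B+\dim R_uQ$ (the first entry is free, the second is constrained to a coset of $R_uQ\subset B'$), so its orbits have dimension at most $\dim B+\dim R_uQ$, while the cell $B\widetilde{w}Bg^{-1}$ has dimension $\dim B+\ell(w)$; hence for $\ell(w)>\dim R_uQ$ (for instance $w=w_0$ whenever $Q$ is not a Borel) the $E_B$-orbit of $\widetilde{w}g^{-1}$ cannot be the whole translated cell. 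If instead you use the honest zip group of $(B,B',\varphi|_T)$, where only torus parts are matched, your cell computation does go through, but that group is not contained in $E_{\mathcal{Z}}$, so the assertion that every $E_{\mathcal{Z}}$-orbit is a union of such cells loses its justification. Beyond this, the part you defer as ``cell-incidence bookkeeping'' --- which cells merge, why each orbit contains a unique element of ${}^IW$, and why the closure relation is the $\psi$-twisted Bruhat order --- is precisely the content of the theorem and occupies most of \cite{AZD}; there it is obtained not by merging Bruhat cells but by induction on smaller zip data attached to Levi subgroups, with the closure relation requiring its own separate argument. A smaller issue: your surjectivity argument for the twisted torus maps uses $d\varphi=0$, whereas the theorem assumes only orbital finiteness; in \cite{AZD} orbital finiteness is shown to be equivalent to these twisted maps being isogenies, and it is that equivalence one must invoke.
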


\begin{remark} The Assumption that $B\subset P$ is related to the concept of frame in \cite[Def. 3.6]{AZD} for  $\mathcal{Z}$.  If we set $(B',T',g')$ with $B':={^{g^{-1}}}B_k$, $T':={^{g^{-1}}}T_k$ and $g'=g$, the triple $(B',T',g')$ is a frame for $\mathcal{Z}$.    We prefer to use the conventions of \cite{GK} fixing a Borel $B_k\subset P$ instead of a Borel in $Q$. This has the effect, for example, that the statement of \cite[Thm. 7.5]{AZD}  is slightly changed.

\end{remark}

There is a dual presentation of the orbit space  for the action of $E_{\mathcal{Z}}$ on $H_k$, described in \cite[Thm. 11.2]{AZD}, in terms of the subset $W^J\subset W$. As explained in \cite[Prop. 9.13 \& 9.14]{AZD} there exists a length preserving bijection $$\sigma\colon {^I W}\to W^J$$characterized by the propert that for any $w\in {^IW}$ there exists a $y\in W_I$ such that $\sigma(w)=y w \psi(y)^{-1}$.  Then \cite[Thm. 11.2 \& 11.5]{AZD} state that:

\begin{theorem}\label{thmWJ} The map $W^J \longrightarrow\left\{ E_{\mathcal{Z}}-\hbox{{\rm orbits in }} H_k\right\}$,  given by $ w \mapsto H_w:=E_{\mathcal{Z}} \cdot (\widetilde{w} g^{-1})$, is a bijection. Morever,

\begin{itemize}

\item[1.] for any $w\in {^IW}$ we have $H_w=H_{\sigma(w)}$. In particular,   for any $u\in {W^J}$ the corresponding orbit $H_u$  is a locally closed, smooth subvariety of $H_k$ of dimension $\dim P+\ell(u)$.

\item[2.] for any $w$, $w'\in {W^J}$ we have that $H_{w'} \subset \overline{H}_w$, and we write $w' \preccurlyeq w$, if and only if there exists $y\in W_I$ such that $y w' \psi(y)^{-1} \leq w $ for the Bruhat order $\leq$ on $W$.

\end{itemize}

\end{theorem}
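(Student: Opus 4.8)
The plan is to derive Theorem~\ref{thmWJ} from Theorem~\ref{thmIW} by transporting everything along the length-preserving bijection $\sigma\colon {^IW}\to W^J$ of \cite[Prop.~9.13 \& 9.14]{AZD}; this is also the architecture of \cite[Thm.~11.2 \& 11.5]{AZD}. Once the identity $H_w=H_{\sigma(w)}$ of part~(1) is available, everything else is formal: the map $u\mapsto H_u$ on $W^J$ is the composite of the bijection $\sigma^{-1}\colon W^J\to{^IW}$ with the bijection of Theorem~\ref{thmIW}, hence bijective, and it sends $u$ to $E_{\mathcal{Z}}\cdot(\widetilde u\,g^{-1})$ precisely because that is what the identity records; the assertion that $H_u$ is a smooth locally closed subvariety of dimension $\dim P+\ell(u)$ then follows from Theorem~\ref{thmIW}(1) and the fact that $\sigma$ is length-preserving.

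To establish $H_w=H_{\sigma(w)}$, I would use that by construction $\sigma(w)=y\,w\,\psi(y)^{-1}$ for a suitable $y\in W_I$ and prove the more general statement that $\psi$-twisted conjugation by elements of $W_I$ preserves the $E_{\mathcal{Z}}$-orbits on $H_k$: for every $y\in W_I$ and every $v\in W$ the points $\widetilde v\,g^{-1}$ and $\widetilde{y\,v\,\psi(y)^{-1}}\,g^{-1}$ lie in a single orbit. The element of $E_{\mathcal{Z}}$ to use is $(p,q):=(\widetilde y,\varphi(\widetilde y))$, with $\widetilde y\in{\rm N}_L(T_k)\subset P$ and $\varphi(\widetilde y)\in M\subset Q$ the image under the isogeny $\varphi\colon L\to M$; membership in $E_{\mathcal{Z}}$ is immediate from the compatibility of $\varphi$ with the projections to the reductive quotients. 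A direct computation, using $\varphi(T_k)={^g T_k}$ and that $\varphi$ induces the isomorphism $\psi$ on Weyl groups, then shows that $p\,(\widetilde v\,g^{-1})\,q^{-1}$ differs from $\widetilde{y\,v\,\psi(y)^{-1}}\,g^{-1}$ only by a left torus factor $t\in T_k$. One absorbs $t$ by replacing $(p,q)$ with $(t'\,\widetilde y,\varphi(t'\,\widetilde y))$ for a suitable $t'\in T_k$; the condition on $t'$ is a Lang-type equation $t'\,\rho(t')^{-1}=t^{-1}$, where $\rho$ is the composite of the isogeny $s\mapsto g^{-1}\varphi(s)\,g$ of $T_k$ with conjugation by $y\,v\,\psi(y)^{-1}$. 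It is solvable because $\rho$ has no nonzero fixed cocharacter, which is ensured by the orbital finiteness of $\mathcal{Z}$. All of this, together with the relations among the fixed representatives $\widetilde w$ of \cite[\S 2.3]{AZD}, is carried out in \cite[\S 9]{AZD}.

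For part~(2), I would observe that the relation ``$H_{w'}\subset\overline H_w$'' is intrinsic to the partition $H_k=\bigsqcup_w H_w$, so by part~(1) it coincides, for $w,w'\in W^J$, with the closure relation of Theorem~\ref{thmIW}(2) read on $\sigma^{-1}(w),\sigma^{-1}(w')\in{^IW}$. It then remains to rewrite that criterion intrinsically on $W^J$. Since $\sigma(w')=y_0\,w'\,\psi(y_0)^{-1}$ for some $y_0\in W_I$ and $W_I$ is a group, the subsets $\{\,y\,w'\,\psi(y)^{-1}:y\in W_I\,\}$ and $\{\,y\,\sigma(w')\,\psi(y)^{-1}:y\in W_I\,\}$ of $W$ coincide, so one may replace $w'$ by $\sigma(w')$ in the Bruhat-domination condition; that one may at the same time test against $\sigma(w)$ in place of $w$ is \cite[Prop.~9.14]{AZD}, equivalently \cite[Thm.~11.5]{AZD}. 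This yields the stated formulation.

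The main obstacle is the orbit computation of the second paragraph, and in particular the cancellation of the torus factor: that is where one must keep careful track of the globally fixed representatives $\widetilde w$, of the identifications $W(L,T_k)\cong W_I$ and $W(M,{^g T_k})\cong W_J$ implicit in $\psi$, and of the fact that $\varphi$ restricted to $T_k$ is not literally conjugation by $g$. Everything else is formal transport along $\sigma$, with the combinatorial input for part~(2) quoted from \cite{AZD}.
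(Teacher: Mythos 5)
The paper itself does not prove this statement: Theorem \ref{thmWJ} is quoted (up to the change of frame conventions noted in the Remark after Theorem \ref{thmIW}) directly from \cite[Thm.~11.2 \& 11.5]{AZD}, just as Theorem \ref{thmIW} is quoted from \cite[Thm.~7.5]{AZD}. Your reconstruction — transport along $\sigma$, orbit invariance under $\psi$-twisted $W_I$-conjugation using elements $(\widetilde y,\varphi(\widetilde y))\in E_{\mathcal{Z}}$ together with a Lang-type argument on $T_k$, and the Bruhat-order bookkeeping for the closure relation — follows the same route as that cited source and is sound, with the genuinely technical steps (solvability of the twisted Lang equation via orbital finiteness, and replacing $w$ by $\sigma(w)$ on the right of $\leq$) correctly delegated to \cite[\S 9, Thm.~11.5]{AZD}, which is precisely what the paper relies on as well.
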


In fact, we have a richer structure of quotient stacks; see for example \cite[\S 2.2]{Fzips}. Recall that if $X$ is a $k$-scheme and $Q$ is a linear algebaric $k$-group scheme acting on the left on $X$ the quotient stack $[Q\backslash X]$ is the stack fibered over the category of $k$-schemes whose $S$-valued points consist of pairs $(I,\alpha)$ with (1) $I$  a left $Q$-torsor over $S$ (2) $\alpha\colon I \to X$ ia  morphism commuting with the left $Q$-actions on $I$ and on $X$. There is a notion of topological space asociated to $[Q\backslash X]$ that we denote $\vert [Q\backslash X]\vert$

If the number of orbits  $Q\backslash X$ in $X$ for the action of $H$ is finite, then such space has a unique topology compatible with the closure relation among orbits; see \cite[Prop. 2.1]{Fzips}. Furthermore, \cite[Prop. 2.2]{Fzips} guarantees that the natural map $X\to [Q\backslash X]$ induces an homeomorphism $(Q(k)\backslash X(k))\cong \vert [Q\backslash X]\vert$. So Theorems \ref{thmIW} and  \ref{thmWJ} describe the topological space underlying the quotient stack $[ E_{\mathcal{Z}}\backslash  H_k ]$.

\subsection{Fine Deligne-Lusztig varieties}\label{sec:DL}

We use the notation of the previous section. We start with a connected reductive group $H$ over $\FF_p$ and a cocharacter $\chi\colon \GG_{m,k}\to H_k$, over $k=\overline{\FF}_p$.
This defines a parabolic subgroup $P\subset H_k$ that we assume to contain a Borel subgroup $B_k$, defined over $\FF_p$.

Let $Q:=P^{(p)}$ be the base change of $P\subset H_k$ via Frobenius on $k$. It is a parabolic subgroup of $(H_k)^{(p)}=H_k$ (as $H$ is defined over $\FF_p$). It contains the Borel subgroup $(B_k)^{(p)}=B_k$ (as $B_k$ is also defined over $\FF_p$). The Frobenius map $\varphi\colon P \to P^{(p)}=Q$ defines an isogeny $\varphi\colon P/{\rm R}_u P \to Q/{\rm R}_u Q$. 

Then $\mathcal{Z}:=\mathcal{Z}_{\rm DL}:=(P,Q,\varphi)$ is a zip datum as in Definition \ref {AZD}. Note that $Q/{\rm R}_u Q\cong (P/{\rm R}_u P)^{(p)}$ and then $\varphi$ is Frobenius.  In particular, the differential of $\varphi$ at $1$ is zero and $\mathcal{Z}_{\rm DL}$ is orbitally finite.

Write $ \mathrm{F}\ell_{H_k,\chi}:=P\backslash H_k$. The projection map $\rho\colon H_k\to  \mathrm{F}\ell_{H_k,\chi}  $ is a $P$-torsor (for the \'etale topology) so that $ \mathrm{F}\ell_{H_k,\chi}$ represents the quotient stack $[P\backslash H_k]$. The  inverse of Lang isogeny  $\gamma\colon H_k \to H_k$, $h\mapsto h \varphi(h)^{-1}$ defines a map of quotient stack $$\overline{\gamma}\colon [P\backslash H_k]\longrightarrow [E_{\mathcal{Z}}\backslash H_k].$$

\begin{proposition}\label{prop:gammabar} The map $\overline{\gamma}$ is smooth. In particular, the induced map on underlying topological spaces is surjective, continuous and open. 

\end{proposition}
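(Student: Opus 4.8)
The plan is to exhibit $\overline{\gamma}$ as a quotient of a smooth morphism and deduce smoothness from that. First I would observe that the inverse Lang map $\gamma\colon H_k \to H_k$, $h\mapsto h\varphi(h)^{-1}$, is itself a smooth (in fact \'etale-locally trivial) morphism: indeed $\gamma$ is the composite of $H_k \to H_k\times H_k$, $h\mapsto (h,h)$ with $\mathrm{id}\times\varphi$ and the multiplication map in the second factor inverted, and its fibers are torsors under the finite group $H(\FF_p)$ acting by left multiplication $h\mapsto \xi h$; since $H$ is smooth over $\FF_p$ and $\gamma$ is a torsor under a finite \'etale group scheme, $\gamma$ is smooth and surjective.

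Next I would identify the two quotient presentations compatibly. The key point is that $\gamma$ intertwines the left $P$-action on the source with the left $E_{\mathcal{Z}}$-action on the target: for $h\in H_k$ and $p\in P$ one computes
\begin{equation*}
\gamma(p h)= p h \varphi(p h)^{-1}= p\, \bigl(h\varphi(h)^{-1}\bigr)\,\varphi(p)^{-1}= (p,\varphi(p))\cdot \gamma(h),
\end{equation*}
where in the last step one uses that $\bigl(p,\varphi(p)\bigr)\in E_{\mathcal Z}$ — here $\varphi(p)$ means a lift to $Q$ of the image $\varphi(\pi_P(p))\in Q/R_uQ$; more precisely, since $\varphi$ is Frobenius on $P$, the element $p^{(p)}\in Q=P^{(p)}$ does the job and $(p,p^{(p)})\in E_{\mathcal{Z}}$ by definition of the zip group. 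Thus $\gamma$ is $P$-equivariant for the homomorphism $P\to E_{\mathcal{Z}}$, $p\mapsto (p,p^{(p)})$, and this exhibits $[P\backslash H_k]\to [E_{\mathcal{Z}}\backslash H_k]$ as the morphism $\overline{\gamma}$ induced on quotient stacks.

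Then I would conclude smoothness by a descent/fibration argument: $\overline\gamma$ fits into a $2$-cartesian square
\begin{equation*}
\begin{CD}
H_k @>\gamma>> H_k\\
@VVV @VVV\\
[P\backslash H_k] @>\overline\gamma>> [E_{\mathcal{Z}}\backslash H_k],
\end{CD}
\end{equation*}
where the vertical maps are the canonical atlases (the left one because $\rho\colon H_k\to \mathrm{F}\ell_{H_k,\chi}$ is a $P$-torsor, the right one because $H_k\to [E_{\mathcal{Z}}\backslash H_k]$ is an $E_{\mathcal{Z}}$-torsor, using that $E_{\mathcal{Z}}$ acts on $H_k$ making it a torsor over the stack). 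Since $\gamma$ is smooth and surjective and smoothness of a morphism of algebraic stacks can be checked after smooth base change to a scheme, $\overline\gamma$ is smooth. The final sentence of the proposition is then automatic: a smooth morphism of finite type is flat and locally of finite presentation, hence open on the underlying topological spaces, and it is surjective because $\gamma$ is surjective on $k$-points and, by \cite[Prop. 2.2]{Fzips}, the topological space of each quotient stack is the set of $k$-orbits; continuity holds for any morphism of stacks.

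The main obstacle I expect is purely bookkeeping: making the equivariance computation $\gamma(ph)=(p,p^{(p)})\cdot\gamma(h)$ rigorous requires being careful that the zip group $E_{\mathcal{Z}}$ is defined via the reductive quotients $P/R_uP$ and $Q/R_uQ$, not $P$ and $Q$ themselves, so one must either work with the homomorphism $P\to E_{\mathcal{Z}}$ landing in the subgroup of pairs $(p,q)$ with $q$ any lift of $p^{(p)}$, or — cleaner — first replace $E_{\mathcal{Z}}$-torsors over $H_k$ by the description in \cite[\S 2.2]{Fzips} and check the square is cartesian there. Once the square is set up correctly, smoothness is a formal consequence and there is nothing else to prove.
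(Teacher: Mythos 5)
Your equivariance computation $\gamma(ph)=(p,p^{(p)})\cdot\gamma(h)$ is correct and is exactly what makes $\overline\gamma$ well defined, but the square you build on it is not $2$-cartesian, and that is a genuine gap. Pulling back the atlas $H_k\to[E_{\mathcal{Z}}\backslash H_k]$ along $\overline\gamma$ yields an $E_{\mathcal{Z}}$-torsor over $\mathrm{F}\ell_{H_k,\chi}$, namely the pushout $E_{\mathcal{Z}}\times^P H_k$ of the $P$-torsor $H_k\to \mathrm{F}\ell_{H_k,\chi}$ along $p\mapsto(p,p^{(p)})$, equipped with the $E_{\mathcal{Z}}$-equivariant map $\nu\bigl((p,q),h\bigr)=p\,\gamma(h)\,q^{-1}$; this has relative dimension $\dim E_{\mathcal{Z}}=\dim P+\dim \mathrm{R}_u Q$ over the flag variety, whereas your proposed top row $\gamma\colon H_k\to H_k$ is finite \'etale. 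So your square would force $H_k\cong E_{\mathcal{Z}}\times^P H_k$, which fails whenever $Q\neq H_k$, and the step ``smoothness can be checked after smooth base change to a scheme'' has no cartesian diagram to apply to as written. (A minor slip as well: the fibres of $\gamma$ are torsors under $H(\FF_p)$ acting by \emph{right} multiplication; left multiplication conjugates $\gamma(h)$ rather than preserving it.)

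The argument can be repaired in two ways, and the paper takes the first: form the genuinely cartesian square with $E_{\mathcal{Z}}\times^P H_k$ and $\nu$ in the top row, and prove that $\nu$ is smooth; since source and target are smooth over $k$, this reduces to surjectivity on tangent spaces at $k$-points, which holds because $\gamma$ is \'etale, so already the composite $H_k\subset E_{\mathcal{Z}}\times^P H_k\stackrel{\nu}{\longrightarrow}H_k$ is surjective on tangent spaces. Alternatively, keep your square as merely $2$-commutative and use smooth-locality on the \emph{source} instead of base change: the composite of $\rho\colon H_k\to \mathrm{F}\ell_{H_k,\chi}$ with $\overline\gamma$ is isomorphic to $H_k\stackrel{\gamma}{\longrightarrow}H_k\to[E_{\mathcal{Z}}\backslash H_k]$, a composite of smooth morphisms, and since $\rho$ is smooth and surjective, smoothness descends to $\overline\gamma$. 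With either patch the remaining assertions go through as you say: openness and continuity come from smoothness, and surjectivity on topological spaces uses Lang's theorem (surjectivity of $\gamma$ on $k$-points) together with the identification of the points of the quotient stack with $E_{\mathcal{Z}}(k)$-orbits.
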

\begin{proof} Recall that to give a morphism $S \to  [E_{\mathcal{Z}}\backslash H_k]$ is equivalent to give an $E_{\mathcal{Z}}$-torsor $Y$ over $S$ and a $E_{\mathcal{Z}}$-equivariant morphism $Y\to H_k$. In our case, using that $ \mathrm{F}\ell_{H_k,\chi}$ represents  $[P\backslash H_k]$, the morphism $\overline{\gamma}$ is defined by 
\smallskip

(1) the $E_{\mathcal{Z}}$-torsor $E_{\mathcal{Z}}\times^P H_k$ over  $\mathrm{F}\ell_{H_k,\chi} $ defined by pushing-forward $P\backslash \bigl(E_{\mathcal{Z}}\times_k H_k\bigr)$ of  the inclusion $P\subset H_k$ via the group homomorphism $P\hookrightarrow E_{\mathcal{Z}}$, $ h\mapsto (h,\varphi(h)\bigr)$;

\smallskip
(2) the  $E_{\mathcal{Z}}$-equivariant map  $\nu\colon E_{\mathcal{Z}}\times^P H_k \to H_k$ defined by $\bigl((p,q),h)\mapsto (p,q)\cdot \gamma(h)=p \gamma(h) q^{-1}$. 
\smallskip

Thus the diagram 

$$\begin{matrix}
E_{\mathcal{Z}}\times^P H_k &\stackrel{\nu}{\longrightarrow} & H_k \cr
\downarrow & & \downarrow\cr
 \mathrm{F}\ell_{H_k,\chi} &\stackrel{\overline{\gamma}}{\longrightarrow} &  [E_{\mathcal{Z}}\backslash H_k], \cr
\end{matrix}$$
where the vertical maps are the natural projection maps, is cartesian and in order to prove the claim it sufffices to show that $\nu$ is smooth. The scheme  $E_{\mathcal{Z}}\times^P H_k$ is a smooth over $k$, being a tosor under the smooth algebraic group $E_{\mathcal{Z}}$ over the smooth scheme $\mathrm{F}\ell_{H_k,\chi}$. As also $H_k$ is smooth, in order to prove that $\nu$ is smooth it suffices to show that it is surjective on the tangent space at every $k$-valued point. 

The map $\gamma$ is finite and \'etale so that inclusion $ H_k \subset E_{\mathcal{Z}}\times^P H_k \to  H_k$ is already surjective on tangent spaces and the claim follows. 
\end{proof}

Recall from Theorems \ref{thmIW} and \ref{thmWJ} that $\vert [E_{\mathcal{Z}}\backslash H_k] \vert$ is homeomorphic to ${^I}W$ (resp.~$W^J$), where $W$ is the Weyl group of $H_k$, $I$ is the type fo the parabolic $P$, $J$ is the type of $Q$, we can take $g={\rm Id}$ in loc.~cit.~and the topologial structure on ${^I}W$ (resp.~$W^J$) is defined by the order relation $w' \preccurlyeq w$ described as follows. 

\smallskip

Let $I$ be the type of $P$ and let $J=\varphi(I)$ be the type of $Q=P^{(p)}$. Frobenius on $H_k$ defines an isomorphism of Weyl groups $\varphi\colon W\to W$ and a bijection on simple reflections, as the Weyl group and the simple reflections are defined with respect to a Borel subgroup and a torus defined over $\FF_p$. Then $\varphi$ defines an isomorphism 
\begin{equation}\label{foulapsiDL} \psi_{\rm DL}\colon W_I \longrightarrow W_J, \quad w\mapsto \varphi(w)\end{equation}  
and, thanks to Theorems \ref{thmIW} and \ref{thmWJ}, for $w$ and $w'\in {^I W}$ (or in $W^J$) we have $w' \preccurlyeq w$ iff there exists $y\in W_I$ such that $y w' \psi_{\rm DL}(y)^{-1} \leq w$.

\begin{definition}  For every $w\in {^I} W $ we let $\mathrm{F}\ell_{H_k,\chi,w}$ be the inverse image of $w\in {^I}W$ via the map of topologial space  $\mathrm{F}\ell_{H_k,\chi}\to \vert [E_{\mathcal{Z}}\backslash H_k] \vert \cong {^I}W$ with reduced induced scheme structure.

Similarly for $u\in W^J$ we let $\mathrm{F}\ell_{H_k,\chi}^u$ be the inverse image of $u\in W^J$ via the map of topologial space  $\mathrm{F}\ell_{H_k,\chi}\to \vert [E_{\mathcal{Z}}\backslash H_k] \vert \cong W^J$.
\end{definition}

By theorem \ref{thmWJ} there is a bijection $\sigma\colon {^I W}\cong W^J$ and for every $w\in {^I}W$ we have  $\mathrm{F}\ell_{H_k,\chi,w}=\mathrm{F}\ell_{H_k,\chi}^{\sigma(w)}$.  

\begin{corollary} Each $\mathrm{F}\ell_{H_k,\chi,w}$ and each $\mathrm{F}\ell_{H_k,\chi}^u$ is smooth and equidimensional of dimension $\ell(w)$, resp.~$\ell(u)$. Moreover  $\mathrm{F}\ell_{H_k,\chi,w'}$ is in the closure of $ \mathrm{F}\ell_{H_k,\chi,w}$ (respectively $\mathrm{F}\ell_{H_k,\chi}^{u'}$ is in the closure of $\mathrm{F}\ell_{H_k,\chi}^u$) if and only if  $w' \preccurlyeq w$ (respectively $u'\preccurlyeq  u$).

\end{corollary}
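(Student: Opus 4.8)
The plan is to deduce everything from Proposition \ref{prop:gammabar} together with Theorems \ref{thmIW} and \ref{thmWJ}. The strata $\mathrm{F}\ell_{H_k,\chi,w}$ (resp.\ $\mathrm{F}\ell_{H_k,\chi}^u$) are by definition the fibers of the continuous map $f\colon \mathrm{F}\ell_{H_k,\chi}\to |[E_{\mathcal{Z}}\backslash H_k]|$ over the points of $|[E_{\mathcal{Z}}\backslash H_k]|$ identified with ${^I}W$ (resp.\ $W^J$) via loc.\ cit. Since $\overline{\gamma}$ is smooth, hence flat with smooth fibers and open on topological spaces, the first step is to compute the scheme-theoretic fibers of $f$: in the cartesian diagram of Proposition \ref{prop:gammabar}, the preimage of $H_w\subset H_k$ is $\nu^{-1}(H_w)\subset E_{\mathcal{Z}}\times^P H_k$, which surjects onto $\mathrm{F}\ell_{H_k,\chi,w}$ via the smooth projection $E_{\mathcal{Z}}\times^P H_k\to \mathrm{F}\ell_{H_k,\chi}$ (this projection has fibers isomorphic to $E_{\mathcal{Z}}$, of dimension $\dim E_{\mathcal{Z}}=\dim P$). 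By Theorem \ref{thmIW}(1), $H_w$ is smooth locally closed of dimension $\dim P+\ell(w)$ in $H_k$; since $\nu$ is smooth and surjective, $\nu^{-1}(H_w)$ is smooth of dimension $\dim(E_{\mathcal{Z}}\times^P H_k)-\dim H_k+\dim H_w=\dim H_k+\dim P-\dim H_k+\dim P+\ell(w)$, and dividing by the $\dim P$-dimensional fibers of the $E_{\mathcal{Z}}$-projection (which is smooth surjective) gives that $\mathrm{F}\ell_{H_k,\chi,w}$ is smooth of dimension $\dim P+\ell(w)-\dim P=\ell(w)$. Equidimensionality follows because each stratum, being a single orbit-fiber, is irreducible-componentwise governed by the smoothness of the maps involved — more precisely, smooth surjective morphisms transfer equidimensionality, and $H_w$ is equidimensional of dimension $\dim P+\ell(w)$ by Theorem \ref{thmIW}(1).

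For the closure relations, the clean argument is topological. The map $f\colon \mathrm{F}\ell_{H_k,\chi}\to |[E_{\mathcal{Z}}\backslash H_k]|$ factors as $\mathrm{F}\ell_{H_k,\chi}\xrightarrow{\sim}|[P\backslash H_k]|\xrightarrow{|\overline{\gamma}|}|[E_{\mathcal{Z}}\backslash H_k]|$, and by Proposition \ref{prop:gammabar} the second map is continuous, open and surjective. A continuous open surjective map of topological spaces reflects and preserves specialization: if $x'$ specializes to $x$ then $f(x')$ specializes to $f(x)$ (continuity), and conversely if $f(x')$ specializes to $f(x)$ one can lift (openness plus the fact that on the target the closure relation is exactly $\preccurlyeq$ by Theorems \ref{thmIW}(2) and \ref{thmWJ}(2)). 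Concretely, $\overline{\mathrm{F}\ell_{H_k,\chi,w}}=f^{-1}(\overline{\{w\}})$ because $\overline{\gamma}$ is open: for an open surjection $g$, $g^{-1}(\overline{A})=\overline{g^{-1}(A)}$ for any subset $A$ (the inclusion $\supseteq$ is continuity; $\subseteq$ uses that $g$ open implies $g(\overline{g^{-1}(A)})$ is closed and contains $A$, hence contains $\overline{A}$). Thus $\mathrm{F}\ell_{H_k,\chi,w'}\subset \overline{\mathrm{F}\ell_{H_k,\chi,w}}$ iff $w'\in \overline{\{w\}}$ in ${^I}W$ iff $w'\preccurlyeq w$, and identically for $W^J$ using Theorem \ref{thmWJ}(2). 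The identification $\mathrm{F}\ell_{H_k,\chi,w}=\mathrm{F}\ell_{H_k,\chi}^{\sigma(w)}$ noted before the corollary reconciles the two indexings, so the two closure statements are the same statement.

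The main obstacle I expect is bookkeeping the dimension count through the two smooth surjections $\nu$ and the $E_{\mathcal{Z}}$-projection — one has to be careful that "smooth surjective of relative dimension $d$" is what transfers equidimensionality and shifts dimension by $d$, and that the relative dimension of $\nu$ is $\dim(E_{\mathcal{Z}}\times^P H_k)-\dim H_k=\dim P$ while that of $E_{\mathcal{Z}}\times^P H_k\to \mathrm{F}\ell_{H_k,\chi}$ is also $\dim P$, so they cancel and we land at $\ell(w)$ as claimed. A secondary point requiring care is that "smooth" for the stratum with its reduced induced structure really does follow: the preimage $\nu^{-1}(H_w)$ maps to $\mathrm{F}\ell_{H_k,\chi,w}$ by a smooth surjective morphism onto the reduced locally closed subscheme, and since smoothness descends along faithfully flat maps and the source is smooth (being smooth over the smooth $H_w$), the target is smooth; one should note $\mathrm{F}\ell_{H_k,\chi,w}$ is already reduced since it is defined with reduced structure and the source being smooth forces this to be consistent. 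Everything else is formal once Proposition \ref{prop:gammabar} is in hand.
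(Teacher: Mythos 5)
Your proposal follows essentially the same route as the paper: both rest on Proposition \ref{prop:gammabar} (smoothness, hence openness and surjectivity, of $\overline{\gamma}$) combined with Theorems \ref{thmIW} and \ref{thmWJ}, with the closure relations obtained purely topologically from the continuous open surjection onto $\vert[E_{\mathcal{Z}}\backslash H_k]\vert$ and the smoothness/dimension statements descended through the cartesian diagram; the paper merely compresses the dimension step into the remark that an open continuous surjection respects codimensions. Your topological lemma $\overline{g^{-1}(A)}=g^{-1}(\overline{A})$ and the descent of smoothness along the smooth surjection $\nu^{-1}(H_w)\to \mathrm{F}\ell_{H_k,\chi,w}$ are both fine in substance.

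However, the dimension bookkeeping as written contains genuine numerical errors, even though the final answer $\ell(w)$ is correct. One has $\dim E_{\mathcal{Z}}=\dim P+\dim {\rm R}_u Q$, and since here $Q=P^{(p)}$ this equals $\dim P+\dim {\rm R}_u P=\dim H_k$, not $\dim P$ as you assert; consequently $\dim\bigl(E_{\mathcal{Z}}\times^P H_k\bigr)=\dim E_{\mathcal{Z}}+\dim H_k-\dim P=\dim H_k+\dim {\rm R}_u P$, not $\dim H_k+\dim P$, and your displayed computation is also internally inconsistent (it yields $2\dim P+\ell(w)$ for $\dim\nu^{-1}(H_w)$ but you then subtract as if it were $\dim P+\ell(w)$). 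The corrected count is: $\nu$ has relative dimension $\dim {\rm R}_u P$, so $\nu^{-1}(H_w)$ is smooth of dimension $\dim P+\ell(w)+\dim {\rm R}_u P=\dim H_k+\ell(w)$; the torsor projection to $\mathrm{F}\ell_{H_k,\chi}$ has fibres of dimension $\dim E_{\mathcal{Z}}=\dim H_k$, so the stratum has dimension $\ell(w)$. Equivalently, and closer to the paper, one can avoid all of this by noting that the open continuous surjection preserves codimension, so the stratum has codimension $\dim H_k-\dim P-\ell(w)$ in $\mathrm{F}\ell_{H_k,\chi}$, whose dimension is $\dim H_k-\dim P$, again giving $\ell(w)$.
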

\begin{proof} We provide a proof for the strata $\mathrm{F}\ell_{H_k,\chi,w}$. For the strata $\mathrm{F}\ell_{H_k,\chi}^u$ the argument is the same. The closure relation and  the fact that  each $\mathrm{F}\ell_{H_k,\chi,w}$ is non-empty  follows from the fact that the map of topological spaces induced by $\overline{\gamma}$ is continuous, open and surjective.  This also implies that such map respects codimensions. Since by Theorem \ref{thmIW} the codimension of $w\in {^I}W$ is  $\dim H_k-(\dim P+\ell(w))$ it follows that the codimension of $\mathrm{F}\ell_{H_k,\chi,w}$ in $\mathrm{F}\ell_{H_k,\chi}$ is also $\dim H_k-(\dim P+\ell(w))$. Hence its dimension is $\ell(w)$ as wanted. The smoothness assertion follows from the smoothness of $\overline{\gamma}$. 

\end{proof}

In particular, we get a stratification on $\mathrm{F}\ell_{H_k,\chi}$ with strata $\mathrm{F}\ell_{H_k,\chi,w}$, for $w\in {^I}W$, called {\it fine Deligne-Lusztig varieties}, introduced by He \cite{He}.

\subsubsection{A variant}\label{sec:variant} We consider the  the flag variety $\mathrm{F}\ell_{H_k,\chi^ {-1}}:=P_-\backslash H_\kappa$.  We describe an isomorphism $$ \mathrm{F}\ell_{H_k,\chi^ {-1}} \cong  \mathrm{F}\ell_{H_k,{^{w_0}}\chi^{-1}},$$ where $w_0\in W$ the longest element in the Weyl group $W$, compatible with stratifications. Later in the paper we will be lead to work with the first space but we will need the dictionary provided by this isomorphism to get a description of the stratification in terms of the Weyl group $W$ and the fixed set of simple reflections $\Delta$.

Recall that $I$ is the type of $P$ and $I_-$ is the type of the parabolic $P_-$ defined by the cocharacter $\chi^{-1}$. As $P_-$ is the parabolic opposite to $P$ we have $P_{I_-}={^{w_0}}P_-$ so that $I_-={^{w_0} I}$.  Recall also that Frobenius defines an automorphism $\varphi$ of the Weyl group $W$ and that $J:=\varphi(I)$ is the type of $P^{(p)}$ and $K=\varphi(I_-)={^{w_0}J}$ is the type of $Q=P_-^{(p)}$. Consider the elements $y:={^I}w_0=w_0^{I_-}$ and $z=\varphi(y)={^J}w_0=w_0^K$ of $W$ defined in Lemma \ref{lemma:z}. Then $P_-={^y} P_{I_-} $ and conjugation by $\widetilde{y}^{-1}$ provides the identification $L \cong {^{w_0}} L$ of the Levi subgroup $L=P\cap P_-$ of $P_-$ and the Levi subgroup of $P_{I_-}$ so that $B\cap L \cong B\cap {^{w_0}} L$. Similarly,   conjugation by $\widetilde{z}^{-1}$ provides the identification $L^{(p)} \cong {^{w_0}} L^{(p)}$ of the Levi subgroup $Q=P_-^{(p)}$ and the Levi subgroup of $P_K=P_{I_-}^{(p)}$ so that $B\cap L^{(p)} \cong B\cap  {^{w_0}} L^{(p)}$. Consider the zip data $\mathcal{Z}_{\rm opp}:=(P_-,Q,\varphi)$ and $\mathcal{Z}_-:=(P_{I_-},P_K,\varphi)$.

\begin{lemma}\label{lemma:EZEZ'} The map $ E_{\mathcal{Z}_{\rm opp}}\longrightarrow  E_{\mathcal{Z}_-}$, $(p,q)\mapsto \bigl(\widetilde{y}^{-1}p \widetilde{y},\widetilde{z}^{-1}q \widetilde{z}\bigr)$, defines an isomorphism of algebraic groups.

Similarly, the map $H_\kappa\longrightarrow H_\kappa$, $h\mapsto \widetilde{y}^{-1} h  \widetilde{z}$, defines an isomorphism which is equivariant for the action of  $ E_{\mathcal{Z}_{\rm opp}}$ and $  E_{\mathcal{Z}_-}$ respectively and an isomorphism  $ \mathrm{F}\ell_{H_k,\chi^ {-1}} \longrightarrow  \mathrm{F}\ell_{H_k,{^{w_0}}\chi^{-1}}$ making the following diagram (see Proposition \ref{prop:gammabar})

$$\begin{matrix} \mathrm{F}\ell_{H_k,\mu^ {-1}}  & \longrightarrow & [E_{\mathcal{Z}_{\rm opp} }\backslash H_k]\cr
 \big\downarrow & & \big\downarrow \cr
 \mathrm{F}\ell_{H_k,{^{w_0}}\mu^{-1}} & \longrightarrow & [E_{\mathcal{Z}_-}\backslash H_k]
\end{matrix}$$commute. In particular, the topological space underlying $[E_{\mathcal{Z}_-}\backslash H_k]$ admits a description in terms of ${^{I_-}}W$ with the order  $\preccurlyeq$ as in Theorem \ref{thmIW} and the scheme $ \mathrm{F}\ell_{H_k,\mu^ {-1}} $ admits an induced startification labeled by ${^{I_-}}W$.
\end{lemma}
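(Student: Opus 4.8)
\textbf{Proof proposal for Lemma \ref{lemma:EZEZ'}.}

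The plan is to check the three assertions directly from the defining relations, using that conjugation by a representative $\widetilde{w}$ is a group automorphism of $H_k$ that carries one parabolic to another according to the recipe ${^{w}}P_S = P_{wSw^{-1}}$ on types, and that the isogeny $\varphi$ is Frobenius, hence compatible with the conjugation maps since $\widetilde{y}$, $\widetilde{z}$ normalize $T_k$ and $z=\varphi(y)$. First I would record the structural facts already assembled in the paragraph preceding the statement: $P_- = {^y}P_{I_-}$ with $y = {^I}w_0 = w_0^{I_-}$, $Q = P_-^{(p)} = {^z}P_K$ with $z = \varphi(y) = {^J}w_0 = w_0^K$, and that conjugation by $\widetilde{y}^{-1}$ (resp.\ $\widetilde{z}^{-1}$) carries $(P_-, B\cap L)$ to $(P_{I_-}, B\cap {^{w_0}}L)$ (resp.\ $(Q, B\cap L^{(p)})$ to $(P_K, B\cap {^{w_0}}L^{(p)})$). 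These are exactly what is needed to see the maps land in the right groups.

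For the first assertion: the map $(p,q)\mapsto(\widetilde y^{-1}p\widetilde y, \widetilde z^{-1}q\widetilde z)$ is visibly a homomorphism $P_-\times Q \to P_{I_-}\times P_K$ with inverse given by conjugation by $\widetilde y$, $\widetilde z$, so it suffices to check it carries the closed subgroup $E_{\mathcal{Z}_{\rm opp}} = \{(p,q): \varphi(\pi_{P_-}(p)) = \pi_Q(q)\}$ onto $E_{\mathcal{Z}_-} = \{(p',q'): \varphi(\pi_{P_{I_-}}(p')) = \pi_{P_K}(q')\}$. This reduces to a compatibility between the projection-to-Levi-quotient maps and the conjugations, together with the identity $\varphi\circ(\text{conj by }\widetilde y^{-1}) = (\text{conj by }\widetilde z^{-1})\circ\varphi$ on the relevant Levi quotients, which holds because $\varphi$ is Frobenius and $z=\varphi(y)$, so $\varphi(\widetilde y) = \widetilde z$ up to the chosen representatives (here one uses the multiplicativity property $\widetilde{w_1w_2}=\widetilde w_1\widetilde w_2$ when lengths add, reducing to simple reflections). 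So one reads off the equivalence of the two torsor conditions and concludes the isomorphism.

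For the second assertion: the map $h\mapsto \widetilde y^{-1}h\widetilde z$ is an isomorphism of $H_k$ (inverse $h\mapsto\widetilde y h\widetilde z^{-1}$), and for $(p,q)\in E_{\mathcal{Z}_{\rm opp}}$ one computes $\widetilde y^{-1}(p h q^{-1})\widetilde z = (\widetilde y^{-1}p\widetilde y)(\widetilde y^{-1}h\widetilde z)(\widetilde z^{-1}q\widetilde z)^{-1}$, which is exactly the action of the image pair under the group isomorphism of the first part; hence it is equivariant and descends to an isomorphism of quotient stacks $[E_{\mathcal{Z}_{\rm opp}}\backslash H_k]\xrightarrow{\sim}[E_{\mathcal{Z}_-}\backslash H_k]$. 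Restricting to $P_-$-torsors it descends to $\mathrm{F}\ell_{H_k,\chi^{-1}} = P_-\backslash H_k \xrightarrow{\sim} P_{I_-}\backslash H_k = \mathrm{F}\ell_{H_k,{^{w_0}}\chi^{-1}}$, and the square with the two maps $\overline\gamma$ of Proposition \ref{prop:gammabar} commutes because all four maps are induced by the same pair of conjugations on $H_k$ and on the torsor groups. The description of $|[E_{\mathcal{Z}_-}\backslash H_k]|$ in terms of ${^{I_-}}W$ with the order $\preccurlyeq$ is then Theorem \ref{thmIW} applied to the zip datum $\mathcal{Z}_-$ (whose parabolic $P_{I_-}$ contains $B_k$ by construction, so the standing hypothesis of Section \ref{sec:AZD} is met), and the induced stratification on $\mathrm{F}\ell_{H_k,\chi^{-1}}$ labeled by ${^{I_-}}W$ is transported through the isomorphism just built.

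The only genuinely delicate point is bookkeeping the representatives: one must make sure that $\varphi(\widetilde y)$ and $\widetilde z$ agree (not merely up to $T_k$), so that the conjugation maps are strictly intertwined by $\varphi$ rather than only up to an inner automorphism by a torus element; this is handled by the compatible choice of the $\widetilde w$ via \cite[\S 2.3]{AZD} and the fact that $z = \varphi(y)$ as elements of $W$, reducing to simple reflections where $\varphi$ permutes the chosen $\widetilde s$. Everything else is formal manipulation of the zip-datum definitions and of quotient stacks, so I expect no further obstacle.
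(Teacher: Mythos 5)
You are right that the first two assertions are routine, and your flagging of the representative issue is well placed: for the map $(p,q)\mapsto(\widetilde{y}^{-1}p\widetilde{y},\widetilde{z}^{-1}q\widetilde{z})$ to land in $E_{\mathcal{Z}_-}$ one needs $\varphi(\widetilde{y})=\widetilde{z}$ on the nose (or at least up to the centre of the Levi), and this is an \emph{extra} choice beyond the multiplicativity property quoted from \cite[\S 2.3]{AZD}; it can be arranged because $B$, $T$, hence ${\rm N}_{H}(T)$, are defined over $\FF_p$, so one can pick the representatives of the simple reflections in Frobenius-stable fashion (Lang's theorem on each $\varphi$-orbit) and then $\varphi(\widetilde{w})=\widetilde{\varphi(w)}$ for all $w$. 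The equivariance computation and the descent to $P_-\backslash H_k\to P_{I_-}\backslash H_k$ are as you say. (For calibration: the paper offers no proof of this lemma, so your write-up has to carry the whole weight itself.)

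The gap is in the commutativity of the square, which you dismiss as formal ("all four maps are induced by the same pair of conjugations"). It is not: the horizontal maps involve Frobenius, and the two ways around the square applied to $P_-h$ give the $E_{\mathcal{Z}_-}$-classes of $\widetilde{y}^{-1}h\varphi(h)^{-1}\widetilde{z}$ and of $(\widetilde{y}^{-1}h\widetilde{z})\varphi(\widetilde{y}^{-1}h\widetilde{z})^{-1}=\widetilde{y}^{-1}h\,\bigl(\widetilde{z}\varphi(\widetilde{z})^{-1}\bigr)\,\varphi(h)^{-1}\widetilde{z}$, which differ by inserting a representative of $z^{-1}\varphi(z)=w_{K,0}\,w_{\varphi(K),0}$ — a nontrivial Weyl element whenever $\varphi(I)\neq I$, a case that certainly occurs for Hodge-type Shimura data (non-split $H$). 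Concretely, at $h=\widetilde{y}\widetilde{z}^{-1}$ one path gives the class of $1$ and the other the class of a representative of $z^{-1}\varphi(z)$; for the quasi-split unitary group in three variables with $I=\{s_1\}$ one finds $z^{-1}\varphi(z)=s_1s_2\notin W_{I_-}W_{K}$, so this representative does not even lie in $P_{I_-}P_{K}\supset E_{\mathcal{Z}_-}\cdot 1$, and the two classes are genuinely different. So no purely formal argument can close this step for the vertical map $P_-h\mapsto P_{I_-}\widetilde{y}^{-1}h\widetilde{z}$: what does commute on the nose (once $\varphi(\widetilde{y})=\widetilde{z}$) is the square whose left vertical is the left-translation isomorphism $P_-h\mapsto P_{I_-}\widetilde{y}^{-1}h$, because then $(\widetilde{y}^{-1}h)\varphi(\widetilde{y}^{-1}h)^{-1}=\widetilde{y}^{-1}\bigl(h\varphi(h)^{-1}\bigr)\widetilde{z}$ identically. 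Your proof must either work with that identification of the two flag varieties (and check it is the one compatible with the labelling by ${}^{I_-}W$ used afterwards), or prove that right translation by $\widetilde{z}$ preserves the fibres of the Lang-type map $P_{I_-}\backslash H_k\to\bigl[E_{\mathcal{Z}_-}\backslash H_k\bigr]$ — and the computation above shows the latter fails in general. As written, the proposal passes over the only non-formal point of the lemma.
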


\subsection{$H$-Zips}\label{sec:Zips}

Recall the notation. We have a connected reductive group $H$ over $\FF_p$ and a cocharacter $\chi\colon \GG_{m,k}\to H_k$, over $k=\overline{\FF}_p$, that  defines a parabolic subgroup $P=P_+\subset H_k$. We assume it contains a Borel subgroup $B_k$, defined over $\FF_p$.

Denote by $P_-$ be the associated opposite parabolic subgroup of $H_k$ defined by the cocharater $\chi^{-1}$. It is characterized by the property that $\Lie P_-\subset \Lie H_k$ is the sum of the non-positive weight spaces for the adjoint action of $\GG_{m,k}$ via $\chi$. Let $U$ and $V$ be the unipotent radicals of $P$ and $P_-$ respectively.  The quotients $P/U$ and $P_-/V$ are isomorphic reductive groups and we denote such  algebraic group by $L$. Set $Q:=P_-^{(p)}\subset H_k^{(p)}=H_k$. It is a parabolic subgroup of $H_k$ with unipotent radical $V^{(p)}$. The Frobenius map $\varphi\colon P \to P^{(p)}$ defines an isogeny $\varphi\colon P/U\cong L \to L^{(p)} \cong Q/V^{(p)}$. 

Also in this case $\mathcal{Z}:=\mathcal{Z}_{\rm EO}:=(P,Q,\varphi)$ is a zip datum as in Definition \ref {AZD}. It is orbitally finite as $\varphi$ has trivial differential at $1$. We consider the  quotient stack $\bigl[E_{\mathcal{Z}_{\rm EO}} \backslash H_k\bigr]$.
Following \cite[Def. 1.4]{Fzips} we give the following:

\begin{definition}\label{def:HZIP} We define the  category $H-\hbox{\rm Zip}^\chi$ 
fibred over the category $\Sch_k$ of schemes over $k$  as follows.\smallskip

For a scheme $S$ over $k$ the objects  are quadruples $\bigl(I,I_+,I_-,\iota\bigr)$ where $I$ is a right $H_k$-torsor, $I_+\subset I$ is a right $P$-torsor,
$I_-\subset I$ is a right $Q$-torsor and $\iota\colon I_+^{(p)}/U^{(p)} \to I_-/V^{(p)}$ is an isomorphism of $L^{(p)}$-torsors.\smallskip

A morphism of two objects $\bigl(I,I_+,I_-,\iota\bigr))\to \bigl(I',I_+',I_-',\iota'\bigr) $  is a triple of equivariant morphisms $I\to I'$, $I_{\pm} \to
I_{\pm}'$, which are compatible with the inclusions $I_{\pm}\subset I$ and $I_{\pm}'\subset I'$ and and with $\iota$ and $\iota'$.\smallskip

\end{definition}

It is easily verified that  $H-\hbox{\rm Zip}^\chi$  is a stack over  $\Sch_k$.  There is a morphism $H_k\to  H-\hbox{\rm Zip}^\chi$, given by \cite[Construction 3.4]{Fzips}. To a $k$-scheme $S$ and an element $g\in H_k(S)$ we associate 
$I_g=S\times_k H_k$, $I_{g,+}:=S\times_k P$, $I_{g,-}:=g\cdot (S\times_k Q)\subset S\times_k H_k$ , $$\iota_g\colon S\times_k P/U=S\times_k L \stackrel{1\times \varphi}{\longrightarrow} S\times_k L^{(p)}= (S\times_k Q/V^{(p)}) \stackrel{g\cdot \_}{\longrightarrow} 
I_{-,g}/V^{(p)}.$$Then $I_{\underline{g}}:=\bigl(I_g,I_{g,+},I_{g,-},\iota_g\bigr)\in H-\hbox{\rm Zip}^\chi(S) $. By
\cite[Prop. 3.11]{Fzips}:

\begin{proposition}\label{prop:Ig} The map $H_k\to H-\hbox{\rm Zip}^\chi$,  $g\mapsto I_{\underline{g}}$, induces an isomorphism of stacks  $\bigl[E_{\mathcal{Z}_{\rm EO}} \backslash H_k\bigr] \cong H-\hbox{\rm Zip}^\chi$. 
\end{proposition}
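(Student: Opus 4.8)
The plan is to exhibit mutually inverse morphisms of stacks between $\bigl[E_{\mathcal{Z}_{\rm EO}} \backslash H_k\bigr]$ and $H-\hbox{\rm Zip}^\chi$, using that the construction $g\mapsto I_{\underline{g}}$ is $E_{\mathcal{Z}_{\rm EO}}$-invariant in a suitably strong sense, and then invoking effective descent for stacks. First I would record that for a $k$-scheme $S$, $g\in H_k(S)$, and $(p,q)\in E_{\mathcal{Z}_{\rm EO}}(S)$, there is a canonical isomorphism $I_{\underline{g}}\cong I_{\underline{(p,q)\cdot g}} = I_{\underline{p g q^{-1}}}$ in $H-\hbox{\rm Zip}^\chi(S)$: the isomorphism of $H_k$-torsors $I_g=S\times_k H_k\to I_{pgq^{-1}}=S\times_k H_k$ is right multiplication by the element carrying $I_{g,+}=S\times_k P$ to $I_{pgq^{-1},+}=S\times_k P$ and $I_{g,-}=g\cdot(S\times_k Q)$ to $I_{pgq^{-1},-}=pgq^{-1}\cdot(S\times_k Q)$; one checks (this is the content of \cite[Construction 3.4, Prop. 3.11]{Fzips}) that left translation by $p$ on the source torus side and by $q$ on the target, tied together by the defining relation $\varphi(\pi_P(p))=\pi_Q(q)$ of $E_{\mathcal{Z}_{\rm EO}}$, is precisely what makes this map compatible with $\iota_g$ and $\iota_{pgq^{-1}}$. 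This gives a morphism from the quotient prestack to $H-\hbox{\rm Zip}^\chi$, and since $H-\hbox{\rm Zip}^\chi$ is a stack (already noted in the excerpt), it factors through the stackification $\bigl[E_{\mathcal{Z}_{\rm EO}} \backslash H_k\bigr]$.

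Next I would construct a morphism in the other direction, or equivalently show the first one is an isomorphism by checking it is fully faithful and that every object of $H-\hbox{\rm Zip}^\chi$ is fppf-locally (indeed étale-locally) in the essential image. For essential surjectivity: given $(I,I_+,I_-,\iota)$ over $S$, the $H_k$-torsor $I$ is étale-locally trivial, so after an étale cover we may assume $I=S\times_k H_k$; then $I_+$ is a $P$-torsor inside the trivial $H_k$-torsor, hence reducible étale-locally, and similarly $I_-$ is a $Q$-torsor, so after a further étale cover we may write $I_+=S\times_k P$ and $I_-=g\cdot(S\times_k Q)$ for some $g\in H_k(S)$; finally the datum $\iota$, an isomorphism of $L^{(p)}$-torsors between $(S\times_k P)^{(p)}/U^{(p)}=S\times_k L^{(p)}$ and $g\cdot(S\times_k Q)/V^{(p)}$, can be brought into the standard form $\iota_g$ after twisting $g$ by an element of $Q$, using that $L^{(p)}$-torsor isomorphisms between trivialized torsors are classified by $L^{(p)}(S)$ and $\varphi\colon P/U\to Q/V^{(p)}$ is surjective (it is Frobenius followed by an isomorphism). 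So $(I,I_+,I_-,\iota)\cong I_{\underline{g}}$ locally. For full faithfulness: an isomorphism $I_{\underline{g}}\to I_{\underline{g'}}$ over $S$ is given by an automorphism-type datum, i.e. an element $h\in H_k(S)$ with $hgq'^{-1}$-type conditions; unwinding, the morphisms of torsors compatible with all structures are exactly parametrized by pairs $(p,q)\in E_{\mathcal{Z}_{\rm EO}}(S)$ with $g'=pgq^{-1}$, which is precisely the description of $\mathrm{Hom}$ in the quotient stack.

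I would then assemble these observations: the morphism $\bigl[E_{\mathcal{Z}_{\rm EO}} \backslash H_k\bigr]\to H-\hbox{\rm Zip}^\chi$ is fully faithful (by the $\mathrm{Hom}$-computation) and essentially surjective after an étale base change (by the reduction argument), hence an equivalence of stacks, which is the assertion of Proposition \ref{prop:Ig}. Since this is verbatim \cite[Prop. 3.11]{Fzips} I would simply cite it, but the outline above is the proof one would write out.

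The main obstacle, and the only genuinely non-formal point, is verifying the compatibility of the isomorphism of torsors with the connecting isomorphisms $\iota$: one must check that the ``twisted'' trivializations $I_{g,-}=g\cdot(S\times_k Q)$ interact correctly with the Frobenius twist $I_{g,+}^{(p)}/U^{(p)}$, i.e. that the relation $\varphi\circ\pi_P=\pi_Q$ on the $E_{\mathcal{Z}_{\rm EO}}$ side matches the definition of $\iota_g$ as the composite $1\times\varphi$ followed by left translation by $g$. This is a direct but slightly fiddly diagram chase with torsors and their Frobenius twists; everything else (local triviality of torsors, stackification, the $\mathrm{Hom}$-set computation) is routine descent theory.
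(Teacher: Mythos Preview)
Your proposal is correct and matches the paper's treatment: the paper gives no proof and simply cites \cite[Prop.~3.11]{Fzips}, which is exactly the result you identify and whose argument you outline. Your sketch of the $E_{\mathcal{Z}_{\rm EO}}$-equivariance, local essential surjectivity, and full faithfulness is the standard proof of that proposition in \cite{Fzips}, so nothing further is needed here.
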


Let $I$ be the type of $P$, let $J=\varphi(I)$ be the type of $P^{(p)}$ and let $K={^{w_0}J}$ be the type of $Q$ which is the parabolic opposite to $P^{(p)}$. Here we use that Frobenius on $H_k$ defines an isomorphism of Weyl groups $\varphi\colon W\to W$ and a bijection on simple reflections, as the Weyl group and the simple reflections are defined with respect to a Borel and a torus defined over $\FF_p$. In particular, by Theorem \ref{thmIW} and Theorem \ref{thmWJ}, we have

\begin{corollary}\label{cor:HZipIW} The map  ${^I}W \to H-\hbox{\rm Zip}^\chi$, $w\mapsto [\widetilde{w} \widetilde{z}^{-1}]$ with $z=w_0^K$,  defines a homeomorphism of underlying topological spaces, and  equivalently for $W^K$ instead of ${^I}W $, with the topological structure defined by the partial order $ \preccurlyeq$.

\end{corollary}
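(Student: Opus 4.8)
The plan is to deduce Corollary~\ref{cor:HZipIW} directly from the preceding results, especially Proposition~\ref{prop:Ig}, Theorem~\ref{thmIW} and Theorem~\ref{thmWJ}, by carefully unwinding the identification $[E_{\mathcal{Z}_{\rm EO}}\backslash H_k]\cong H-\hbox{\rm Zip}^\chi$ and tracking which element of $H_k(k)$ corresponds to a given $w\in{}^I W$ under the parametrization of $E_{\mathcal{Z}}$-orbits. Recall that by Proposition~\ref{prop:Ig} the map $g\mapsto I_{\underline g}$ induces an isomorphism of stacks, hence a homeomorphism of underlying topological spaces $|[E_{\mathcal{Z}_{\rm EO}}\backslash H_k]|\cong |H-\hbox{\rm Zip}^\chi|$; so it suffices to identify the $E_{\mathcal{Z}_{\rm EO}}$-orbit parametrization of $|[E_{\mathcal{Z}_{\rm EO}}\backslash H_k]|$ given by Theorems~\ref{thmIW} and~\ref{thmWJ} with the claimed map $w\mapsto[\widetilde w\widetilde z^{-1}]$.

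The key point is bookkeeping of the element $g$ in the ``frame'' data. In the general setup of Section~\ref{sec:AZD}, Theorem~\ref{thmIW} says the orbit attached to $w\in{}^I W$ is $H_w=E_{\mathcal{Z}}\cdot(\widetilde w g^{-1})$, where $g\in H(k)$ is the element of \cite[Prop. 3.7]{AZD} satisfying conditions (a)--(c), i.e.\ ${}^gB_k\subset Q$ and $\varphi$ is compatible with the Borels and tori. In the $H$-Zip situation $\mathcal{Z}_{\rm EO}=(P,Q,\varphi)$ with $Q=P_-^{(p)}$ the parabolic opposite to $P^{(p)}$, so $Q$ has type $K={}^{w_0}J$ where $J=\varphi(I)$; the required $g$ can be taken to be a representative of the element $z=w_0^K$, so that $g^{-1}$ is represented by $\widetilde z^{-1}$ (this uses that conjugating $B_k$ by a representative of $w_0^K$ lands it inside the parabolic of type $K$, together with $\varphi$ carrying the standard Borel/torus of $P$ to those of $P^{(p)}$, which are the standard ones since $B,T$ are defined over $\FF_p$). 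Granting this, $H_w=E_{\mathcal{Z}_{\rm EO}}\cdot(\widetilde w\widetilde z^{-1})$, which under the homeomorphism $X\to[E_{\mathcal{Z}}\backslash X]$ of \cite[Prop. 2.2]{Fzips} corresponds exactly to the class $[\widetilde w\widetilde z^{-1}]$ in the stack; composing with Proposition~\ref{prop:Ig} gives the first homeomorphism. The assertion for $W^K$ is then immediate from Theorem~\ref{thmWJ}, since $H_w=H_{\sigma(w)}$ for $\sigma\colon{}^I W\cong W^K$, and one rewrites the same orbit as $[\widetilde u\widetilde z^{-1}]$ for $u\in W^K$. Finally, the topological structure being the one induced by the partial order $\preccurlyeq$ is precisely the content of Theorems~\ref{thmIW}(2) and~\ref{thmWJ}(2), transported through the stack isomorphism.

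The main obstacle I anticipate is pinning down that $z=w_0^K$ is a legitimate choice of $g$ in the sense of \cite[Prop. 3.7]{AZD}, i.e.\ verifying conditions (a)--(c) for $g=\widetilde z$ in the specific zip datum $\mathcal{Z}_{\rm EO}$. This is where one must use that $Q=P_-^{(p)}$ is opposite to $P^{(p)}$ (so that ${}^{w_0^K}B_k\subset Q$ and moreover $B_k\subset Q$ as well, since $B_k$ is defined over $\FF_p$ and both $P^{(p)}$ and its opposite contain it), and that the Frobenius isogeny $\varphi$ sends the pair $(B_k\cap L, T_k)$ to $(B_k\cap L^{(p)}, T_k)$ because Frobenius fixes $B_k$ and $T_k$. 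Once this compatibility is checked, the induced isomorphism $\psi$ of Weyl groups in~(\ref{psi}) is $\psi_{\rm DL}$-like, given by $\varphi$ restricted to $W_I$, matching the order relation stated via~(\ref{foulapsiDL})-type formulas, and the corollary follows formally. Everything else is a direct application of the quoted theorems, with no substantial new computation required.
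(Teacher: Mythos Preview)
Your approach is correct and coincides with the paper's: the corollary is stated there as an immediate consequence of Proposition~\ref{prop:Ig} together with Theorems~\ref{thmIW} and~\ref{thmWJ}, and right after the statement the paper records that in the notation of Theorem~\ref{thmIW} one has $g=\widetilde z$ with $z=w_0^K$. You supply more detail than the paper by checking conditions (a)--(c) for this choice of $g$, which is helpful.

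Two small corrections. First, your parenthetical claim that ``$B_k\subset Q$ as well, since \ldots\ both $P^{(p)}$ and its opposite contain it'' is false: the opposite parabolic $Q=P_-^{(p)}$ contains the opposite Borel ${}^{\widetilde{w_0}}B_k$, not $B_k$ itself. Fortunately you do not use this; what matters is ${}^{\widetilde z}B_k\subset Q$, and this holds because $z=w_0^K={}^Jw_0=w_{J,0}w_0$, so ${}^{\widetilde z}B_k={}^{\widetilde w_{J,0}}({}^{\widetilde{w_0}}B_k)$ with ${}^{\widetilde{w_0}}B_k\subset Q$ and $w_{J,0}$ in the Levi $L^{(p)}\subset Q$. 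Second, your final remark that the induced $\psi$ is ``$\psi_{\rm DL}$-like, given by $\varphi$ restricted to $W_I$'' is not quite right in the EO case: as recorded in~(\ref{formulapsizip}), here $\psi_{\rm EO}(w)=z\varphi(w)z^{-1}$, i.e.\ Frobenius followed by conjugation by $z$. This does not affect the corollary itself but does matter for the explicit description of $\preccurlyeq$.
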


We can describe the order relation as follows.  In the notation of Theorem  \ref{thmIW}, we have $g=\widetilde{z}$ with $z=w_0^K$ and $\psi$ is the map 
\begin{equation}\label{formulapsizip} \psi_{\rm EO}\colon (W_I,I)\cong (W_K,K), \qquad w\mapsto z\varphi(w) z^{-1}.\end{equation} 
Thanks to Theorems \ref{thmIW} and \ref{thmWJ},  for $w$ and $w'\in {^I W}$ or $w$ and $w'\in W^K$, we have $w' \preccurlyeq w$ iff there exists $y\in W_I$ such that $y w' \psi_{\rm EO}(y)^{-1} \leq w$.

\subsection{The Ekedahl-Oort stratification}\label{sec:EO}

Recall from  the introduction that we have a scheme $S=S_K$, the integral canonical model  of the Shimura variety  $\mathrm{Sh}_K(G,X)$ over $\cO_{E,v}$, and its sepcial fiber $S_0$ over the   residue field $\kappa$ of $\cO_{E,v}$.

By \cite[\S 2.3.1]{Kisin} we may assume that the inclusion $G\subset {\rm GSp}_{2g}$ extends to a morphism of algebraic groups $G_{\ZZ_p} \subset {\rm GSp}_{2g,\ZZ_p}$ where  $G_{\ZZ_p}$ is the given model of $G$ over $\ZZ_p$ and  ${\rm GSp}_{2g,\ZZ_p}$ is the base change from $\ZZ_{(p)}\to \ZZ_p$ of the group of symplectic similitudes of free $\ZZ_{(p)}$-module $\Lambda$ of rank $2g$ endowed with a symplectic perfect pairing $\psi$. The subgroup $G_{\ZZ_p}$ can be realized as the subgroup fixing a finte set  of elements $(s_\alpha)_\alpha$ in the tensor algebra of $\Lambda$.

Furthermore,  possibly after refining the prime-to-$p$ level subgroup $K^p\subset G(\AA^{f,(p)}$ we may assume that there exists an hyperspecial level $K_g:=K_{g,p} K_g^p\subset {\rm GSp}_{2g,\ZZ_p}(\AA^f)$ inducing a morphism of Shimura varieties $\mathrm{Sh}_K(G,X)\to \mathrm{Sh}_{K_g}\bigl({\rm GSp}_{2g},\mathbb{H}_g\bigr)$. This provides  a finite morphism of the integral models $$\xi\colon S_K \longrightarrow S_{K_g} \otimes_{\ZZ_{(p)}} \cO_{E,v},$$where $S_{K_g} $ is a Siegel modular variety. In particular, we may assume that it is a fine moduli space so that it carries a universal abelian scheme.

We set $H:=(G_{\ZZ_p})_{\FF_p}$.  As explained in \cite[\S 4.1.7]{GK} there exists a Borel subgroup $B_{\ZZ_p}\subset G_{\ZZ_p}$. The Hodge cocharacter admits a representative $\widetilde{\mu}$ in its conjugacy class defined over $\widehat{\cO}_{E,v}$ ($\cong \WW(\kappa)$).  Let $\chi\colon \GG_{m,\kappa}\to H_\kappa$ be the reduction of $\widetilde{\mu}$ modulo $p$. The composite of $\widetilde{\mu}$ with the inclusion $G_{\WW(\kappa)} \subset {\rm GSp}_{2g,\WW(\kappa)}$ defines a splitting $\Lambda\otimes \WW(\kappa)=\Lambda_0 \oplus \Lambda_{-1}$ where $\GG_m$ acts on $\Lambda_i$  via the character $z\mapsto z^{-i}$ on $\Lambda_{-i}$ for $i=0$, $-1$.

The main result of \cite[Thm. 3.1.2]{Chao} provides a smooth map $$\zeta\colon  S_0\longrightarrow H-\hbox{{\rm Zip}}^\chi.$$This is refined in \cite[Thm.~6.2.1]{GK}, using the work \cite{MP} on toridal compactifications,  to a morphism $$\zeta^\Sigma \colon  S_0^\Sigma\longrightarrow H-\hbox{\rm Zip}^\chi,$$where $S_0^\Sigma$ is the special fiber of the smooth and projective toroidal compactification  $S^\Sigma$ of $S$ associated to a finite and admissible rational cone decomposition $\Sigma$. We make the following Assumption, following \cite{GK}, in order to define a well behaved compactification of the universal abelian scheme over $S_{K_g}^{\Sigma_g}$ and hence over $S^\Sigma$:

\begin{assumption}\label{ass} $\Sigma$  is supposed to refine the pull-back via $\xi$ of a  finite and admissible  rational cone decomposition $\Sigma_g$ for the Siegel case such that $\Sigma_g$ is log-integral (see \cite[\S 6.1.2]{GK}.  
\end{assumption}

We denote by $\xi^\Sigma\colon S^\Sigma\to S_{K_g}^{\Sigma_g}$ the induced morphism on toroidal compactifications.

\

For the rest of this section we  recall the construction of the map $\zeta^\Sigma$. Let $\cS^{\Sigma}$ be the $p$-adic formal scheme defined by completing $S^{\Sigma}$ along its special fiber $S_{0}^{\Sigma}$. The first de Rham cohomology of the universal  abelian scheme over $S$, obtained by pull-back from $S_{K_g}$, extends to a locally free $\cO_{S^{\Sigma}}$-module ${\rm H}_{\rm dR}^1$ of rank $2g$ endowed with

\smallskip

i)  the Hodge filtration $\Omega\subset {\rm H}_{\rm dR}^1$ which is a locally free sheaf of rank $g$, with locally free quotient;

\smallskip

ii) a connection $\nabla$  with log poles along the complement of $S$ in $S^{\Sigma}$, extending the Gauss-Manin connection.
The  base change $\cH$ of ${\rm H}_{\rm dR}^1$ to $\cS^{\Sigma}$, together with the connection, is the evaluation along the DP-thickening $S_0^\Sigma\subset \cS^{\Sigma} $ of the logarythmic crystal over $S_0^\Sigma$ defines as the dual of the covariant log Dieudonn\'e module of the mod $p$ reduction of the degenerating universal abelian scheme over $S_0^\Sigma$ (see   \cite[Prop. 1.3.5]{MP} and the discussion in \cite[\S 4.3.1]{MP});

\smallskip

iii) if ${\rm Spf}(A)\subset \cS^{\Sigma}$ is an affine open and we choose a lift $\varphi\colon A\to A$ of Frobenius on $A_0=A/pA$, then the evaluation $\cH_A$ of the crystal $\cH$ at $A$ is endowed with a $\varphi$-linear map, the Frobenius, $$F\colon \cH_A^{(p)} \longrightarrow \cH_A.$$

\smallskip

iv)  elements $(s_{\rm dR,\alpha})_\alpha$ in the tensor algebra of ${\rm H}_{\rm dR}^1$ such that $$J:={\rm Isom}\left(\bigl(\bigl(\Lambda , (s_\alpha)_\alpha\bigr)\otimes \cO_{S^\Sigma}, {\rm H}_{\rm dR}^1,(s_{\rm dR,\alpha})_\alpha\bigr)\right)$$is a right $G_{\ZZ_p}$-torsor over $S^\Sigma$. If we further consider $g\in J$ such that $g$ sends $\Omega$ isomorphically onto $\Lambda_0\otimes \cO_{S^\Sigma}$ we have a torsor $J_+$ under the parabolic $P^+_{\widetilde{\mu}}$  defined by $\widetilde{\mu}$. 

\

Choose $A$ and $\varphi$ as in (iii).  Then the  restriction of $F$  to $\Omega$ is divisible by $p$ and it is proven in \cite[Lemma 6.3.4]{GK} (see especially the first paragraph on page $938$)  that $\frac{F}{p}(\Omega) + {\rm Im} F=\cH_A$. 
Take $A$ small enough so that $J_+(A)$ is non-empty and choose $t\in J_+(A)$.  We deduce that the $A$-linear map $$g_t:=t^{-1}\circ \left(\frac{F}{p}\vert_{t^{(p)}(\Lambda_0\otimes A)} \oplus F\vert_{t^{(p)}(\Lambda_{-1}\otimes A)}\right) \circ t^{(p)}  \colon \Lambda\otimes A \to \Lambda \otimes A$$defines an element of $G(A)$. Let $g_{t,0}\in G(A_0)$ be its reduction modulo $p$.  

\

{\it Definition of the $H$-zip $\zeta^\Sigma(A_0)$}.\enspace  The element $g^{-1} t=(t^{-1} g)^{-1}$ defines an isomorphism  $\cH_A \cong  \Lambda\otimes A$. Denote by $J_-$ the $P_-^{(p)}$-torsor of all its trivializations. Let $I$, $I_+$, $I_-$ be the reduction modulo $p$ of $J$, $J_+$ and $J_-$. They are right $H_k$, resp.~$P$, resp.~$Q$-torsors. The map  $\gamma:=t \circ g_t \circ  \bigl(t^{(p)}\bigr)^{-1} = \left(\frac{F}{p}\vert_{t^{(p)}(\Lambda_0\otimes A)} \oplus F\vert_{t^{(p)}(\Lambda_{-1}\otimes A)}\right)\colon \cH_A^{(p)} \to \cH_A$ defines an isomorphism, as explained above. Let $\gamma_0$ be its reduction modulo $p$. Composing with $\gamma_0$ we get  an isomorphism $\iota\colon I_+^{(p)}/U^{(p)} \to I^-/ V^{(p)}$ of $L^{(p)}$-torsors. Then, $(I,I_+,I_-,\iota)$ defines an $H$-zip in the sense of Dedinition \ref{def:HZIP} and $$\zeta^\Sigma(A_0):=(I,I_+,I_-,\iota).$$

Then, in the notation of Proposition \ref{prop:Ig} (cf. \cite[Rmk. 3.2.8]{Chao}  and  \cite[Prop. 6.7]{Wortman}),

\begin{lemma}\label{lemma:zetaSigma}  We have $\zeta^\Sigma(A_0)=I_{\underline{g}_{t,0}}$. 

Furthemore, if  $z\in \widetilde{P}(A)$,  we consider the element $ t \circ z   \in J_+(A)$ and we write $z=\ell u$ with $\ell\in \widetilde{L}(A)$ and $u\in {\rm R}_u \widetilde{P}(A)$ then  $$g_{t \circ z ,0} \equiv z_0^{-1}  g_{t,0} \ell_0^{(p)} .$$

\end{lemma}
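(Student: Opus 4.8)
The plan is to unwind the definitions on both sides of the asserted identity and match them term by term. Recall that $\zeta^\Sigma(A_0)$ is built from the map $g_{t,0}\in G(A_0)$ via the construction $g\mapsto I_{\underline g}$ of Proposition \ref{prop:Ig}, so the first assertion $\zeta^\Sigma(A_0)=I_{\underline g_{t,0}}$ is really just the observation that the four pieces $(I,I_+,I_-,\iota)$ produced in the ``Definition of the $H$-zip'' paragraph coincide, up to the chosen trivializations $t_0$ and $(g^{-1}t)_0$, with the four pieces $(I_g,I_{g,+},I_{g,-},\iota_g)$ of \cite[Construction 3.4]{Fzips} attached to $g=g_{t,0}$. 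I would spell this out: the trivialization $t_0\in I_+(A_0)$ identifies $I$ with the trivial $H_k$-torsor $S_0\times_k H_k$ and $I_+$ with $S_0\times_k P$; the trivialization $(g^{-1}t)_0$ identifies $I_-$ with $S_0\times_k Q$ but \emph{translated by $g_{t,0}$}, since $g^{-1}t=(t^{-1}g)^{-1}$ and $t^{-1}g$ reduces to $g_{t,0}$; finally, under these identifications the isomorphism $\iota$, which by construction is $\gamma_0$ composed with the tautological maps, becomes exactly the composite $1\times\varphi$ followed by left translation by $g_{t,0}$, i.e.\ $\iota_{g_{t,0}}$. This is a bookkeeping argument; the only subtlety is keeping straight whether $g_{t,0}$ or its inverse appears, which is forced by the conventions $I_{g,-}=g\cdot(S\times_k Q)$ and $\gamma=t\circ g_t\circ(t^{(p)})^{-1}$.

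For the second assertion I would compute $g_{t\circ z,0}$ directly from the defining formula
\[
g_{t\circ z}=(t\circ z)^{-1}\circ\Bigl(\tfrac{F}{p}\big|_{(t\circ z)^{(p)}(\Lambda_0\otimes A)}\oplus F\big|_{(t\circ z)^{(p)}(\Lambda_{-1}\otimes A)}\Bigr)\circ(t\circ z)^{(p)}.
\]
The key point is that $z\in\widetilde P(A)$ preserves the filtration $\Omega\subset\cH_A$ corresponding to $\Lambda_0\subset\Lambda\otimes A$, so $(t\circ z)^{(p)}(\Lambda_0\otimes A)=t^{(p)}\bigl(z^{(p)}(\Lambda_0\otimes A)\bigr)$ and, because $z^{(p)}\in\widetilde{P}^{(p)}(A)$ stabilizes $\Lambda_0\otimes A$, this equals $t^{(p)}(\Lambda_0\otimes A)$; similarly for the $\Lambda_{-1}$-part after reduction mod $p$, where the image of $\Lambda_{-1}$ under $z^{(p)}$ differs from $\Lambda_{-1}$ only by something in $\Lambda_0$, which the already-used equality on the $\Lambda_0$-piece absorbs. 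Hence the middle operator is unchanged and
\[
g_{t\circ z}=z^{-1}\circ\Bigl(t^{-1}\circ\bigl(\cdots\bigr)\circ t^{(p)}\Bigr)\circ z^{(p)}=z^{-1}\,g_t\,z^{(p)}.
\]
Writing $z=\ell u$ with $\ell\in\widetilde L(A)$, $u\in\mathrm R_u\widetilde P(A)$, one has $z^{(p)}=\ell^{(p)}u^{(p)}$; reducing mod $p$, the unipotent radical $\mathrm R_u P$ acts trivially on the associated graded / on the $L$-quotient, and—after the translation $g^{-1}t$ that defines $I_-$—the factors $u_0$ and $u_0^{(p)}$ disappear from the $H$-zip datum. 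One is left with the class of $z_0^{-1}g_{t,0}\ell_0^{(p)}=u_0^{-1}\ell_0^{-1}g_{t,0}\ell_0^{(p)}u_0^{(p)}$, and since $(u_0^{-1},u_0^{(p)})\in E_{\mathcal Z_{\mathrm{EO}}}$ (as $\varphi$ is Frobenius, $(u^{-1})^{(p)}=u^{(p)}$) this equals the class of $z_0^{-1}g_{t,0}\ell_0^{(p)}$, giving the congruence $g_{t\circ z,0}\equiv z_0^{-1}g_{t,0}\ell_0^{(p)}$ as elements inducing the same $H$-zip.

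I expect the main obstacle to be the second displayed step, namely verifying that the ``middle operator'' $\frac{F}{p}\oplus F$ is genuinely insensitive to replacing $t^{(p)}$ by $(t\circ z)^{(p)}$. Here one must be careful: $z$ preserves $\Omega$ but need not preserve the complementary summand $t^{(p)}(\Lambda_{-1}\otimes A)$ integrally—only modulo $p$—so the clean identity $g_{t\circ z}=z^{-1}g_t z^{(p)}$ may only hold after reduction, which is exactly why the statement is phrased as a congruence rather than an equality in $G(A)$. I would therefore argue the identity $g_{t\circ z,0}=z_0^{-1}g_{t,0}z_0^{(p)}$ directly over $A_0$, using that $F(\Omega_0)=0$ so that $\frac{F}{p}\big|_{\Omega_0}$ is the relevant Verschiebung-type operator and is manifestly compatible with the $P$-action on source and target, together with the fact from \cite[Lemma 6.3.4]{GK} that $\frac{F}{p}(\Omega)+\mathrm{Im}\,F=\cH_A$ which guarantees $g_{t,0}\in G(A_0)$ in the first place. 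The remaining passage from $z_0^{-1}g_{t,0}z_0^{(p)}$ to $z_0^{-1}g_{t,0}\ell_0^{(p)}$ is then the routine observation that $z_0$ and $\ell_0$ have the same image in $L$ and that modifying $g_{t,0}$ by an element of the zip group does not change the associated $H$-zip, which is precisely the content of Proposition \ref{prop:Ig}.
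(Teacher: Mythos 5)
Your treatment of the first claim is fine: it is the same bookkeeping unwinding of Construction 3.4 of \cite{Fzips} that the paper dismisses as ``a direct verification.''

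The second claim is where your argument has a genuine gap, and it sits exactly at the step you flag as delicate. The middle operator is \emph{not} insensitive to replacing $t$ by $t\circ z$: the decomposition used to define it changes, because $(t\circ z)^{(p)}(\Lambda_{-1}\otimes A)=t^{(p)}\bigl(z^{(p)}(\Lambda_{-1}\otimes A)\bigr)$ is a different complement of $t^{(p)}(\Lambda_0\otimes A)$ whenever the unipotent part $u$ of $z$ is nontrivial, so on the $\Lambda_0$-component of $z^{(p)}(\Lambda_{-1}\otimes A)$ the new operator applies $F$ where the old one applied $\tfrac{F}{p}$. Concretely, write $M:=t^{-1}\circ F\circ t^{(p)}$, so that in block form for $\Lambda_0\oplus\Lambda_{-1}$ one has $M=g_t\left(\begin{smallmatrix} p&0\\0&1\end{smallmatrix}\right)$, and write $\ell=\left(\begin{smallmatrix}\ell_+&0\\0&\ell_-\end{smallmatrix}\right)$, $u^{(p)}=\left(\begin{smallmatrix}1&n^{(p)}\\0&1\end{smallmatrix}\right)$. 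Then
\begin{equation*}
g_{t\circ z}\;=\;z^{-1}\,g_t\,\left(\begin{smallmatrix} p&0\\0&1\end{smallmatrix}\right) z^{(p)}\left(\begin{smallmatrix} p^{-1}&0\\0&1\end{smallmatrix}\right)\;=\;z^{-1}\,g_t\,\left(\begin{smallmatrix} \ell_+^{(p)}& \;p\,\ell_+^{(p)}n^{(p)}\\ 0&\ell_-^{(p)}\end{smallmatrix}\right),
\end{equation*}
where the factor $p$ in the off-diagonal entry is precisely the paper's one-line proof (``Frobenius is divisible by $p$ on $t^{(p)}(\Lambda_0\otimes A)$''). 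Reducing mod $p$ kills that entry and yields $g_{t\circ z,0}=z_0^{-1}g_{t,0}\ell_0^{(p)}$, with $\ell_0^{(p)}$ and not $z_0^{(p)}$: your identity $g_{t\circ z,0}=z_0^{-1}g_{t,0}z_0^{(p)}$ differs from the Lemma by right multiplication by $u_0^{(p)}$ and is false unless $u_0^{(p)}=1$. The proposed repair also fails: $(u_0^{-1},u_0^{(p)})$ is not an element of $E_{\mathcal{Z}_{\rm EO}}$, because the second component of the zip group must lie in $Q=P_-^{(p)}$ while $u_0^{(p)}\in({\rm R}_u P)^{(p)}$, which meets $P_-^{(p)}$ trivially (and note $(u^{-1})^{(p)}=(u^{(p)})^{-1}$, not $u^{(p)}$). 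Finally, even if the two elements happened to lie in the same $E_{\mathcal{Z}_{\rm EO}}$-orbit, that would not prove the Lemma: it asserts an equality in $G(A_0)$, and it is used later in that strong form (e.g.\ in (\ref{eq:Frobmatrix}) and in Proposition \ref{prop:LieE}), not merely an isomorphism of the associated $H$-zips.
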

\begin{proof} The first claim is a direct verification. The last formula follows as  Frobenius is divisible by $p$ on $t^{(p)}(\Lambda_0\otimes A)$.

\end{proof}

\section{Smoothness of $\zeta^\Sigma$}

As conjectured by Wedhorn and Ziegler in \cite[Rmk. 6.4]{WZ}, we have:

\begin{theorem}\label{theorem:zetasigmasmooth} The map $\zeta^\Sigma$ is smooth. In particular, it is open. 

\end{theorem}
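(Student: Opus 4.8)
The plan is to prove smoothness by reducing, via the finite flat morphism $\xi^\Sigma\colon S^\Sigma\to S_{K_g}^{\Sigma_g}$ to the Siegel case, and then to exhibit $\zeta^\Sigma$ locally as an explicit composition of the Frobenius-datum map into an affine space of matrices with smooth maps coming from the torsor structures. Concretely, the statement is local on $S_0^\Sigma$, so I would work on a small affine $\mathrm{Spf}(A)\subset\cS^\Sigma$, choose a Frobenius lift $\varphi$ and a trivialization $t\in J_+(A)$ as in \S\ref{sec:EO}, and recall from Lemma~\ref{lemma:zetaSigma} that $\zeta^\Sigma$ restricted to $\mathrm{Spec}(A_0)$ is the composition
$$\mathrm{Spec}(A_0)\xrightarrow{\ g_{t,0}\ } H_k \longrightarrow \bigl[E_{\mathcal{Z}_{\mathrm{EO}}}\backslash H_k\bigr]\cong H-\hbox{\rm Zip}^\chi,$$
where the second map is the canonical smooth atlas morphism (an $E_{\mathcal{Z}_{\mathrm{EO}}}$-torsor, hence smooth and surjective). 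Since $H_k\to [E_{\mathcal{Z}_{\mathrm{EO}}}\backslash H_k]$ is smooth, smoothness of $\zeta^\Sigma$ will follow once I show that the map $g_{t,0}\colon \mathrm{Spec}(A_0)\to H_k$ is smooth; moreover, by the second formula of Lemma~\ref{lemma:zetaSigma}, changing the trivialization $t$ by an element of $\widetilde P(A)$ alters $g_{t,0}$ by the $E_{\mathcal{Z}_{\mathrm{EO}}}$-action, so the composite is independent of $t$, and I am free to work Zariski-locally and change $t$ at will.

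The heart of the matter is therefore the deformation-theoretic statement that $g_{t,0}$ is formally smooth, i.e.\ surjective on tangent spaces at every point, after possibly shrinking $A$ and adjusting $t$. I would argue this by crystalline deformation theory: the value of $g_{t,0}$ at a point $x\in\mathrm{Spec}(A_0)$ encodes the Dieudonn\'e module (with its $(s_\alpha)$-structure, Hodge filtration, and Frobenius) of the $p$-divisible group at $x$, and a tangent vector at $x$ is a deformation to $A_0[\epsilon]$. Using Kisin's description of the integral model $S_K$ as (étale-locally) pro-representing a deformation problem of $p$-divisible groups with crystalline tensors — equivalently, using that $S_0\to H-\hbox{\rm Zip}^\chi$ is smooth by Zhang's theorem \cite[Thm.~3.1.2]{Chao} on the interior $S_0$ — I would deduce surjectivity on tangent spaces over the open dense $S_0\subset S_0^\Sigma$ for free, and the real work is to extend this across the boundary $S_0^\Sigma\setminus S_0$. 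Here I would invoke the logarithmic Dieudonn\'e theory of \cite{MP}: the de Rham bundle ${\rm H}_{\rm dR}^1$ with its connection with log poles and its Frobenius $F$ extends over $S^\Sigma$ (facts (i)--(iv) in \S\ref{sec:EO}), the key estimate $\frac{F}{p}(\Omega)+\mathrm{Im}\,F=\cH_A$ of \cite[Lemma~6.3.4]{GK} holds globally on $\cS^\Sigma$, and the local charts of the toroidal compactification (semi-abelian degenerations, Mumford construction) give explicit coordinates in which the boundary is a normal crossings divisor. In those coordinates one checks that the deformations of the log-$p$-divisible group realize all tangent directions of $H_k$ via $g_{t,0}$ — essentially because the Kodaira--Spencer map for the Siegel toroidal compactification $S_{K_g}^{\Sigma_g}$ is an isomorphism onto $\mathrm{Sym}^2\Omega$ (with log poles along the boundary), so that the universal deformation of the filtered $F$-crystal with tensors is unobstructed and versal along $S^\Sigma$ too, and $\xi^\Sigma$ being finite flat transports this to $S^\Sigma$.

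I would organize the argument in the following steps. First, record that smoothness is fppf-local on the source and can be checked on the composition with the atlas $H_k\to H-\hbox{\rm Zip}^\chi$, reducing to smoothness of $g_{t,0}$. Second, treat the interior: on $S_0$, invoke \cite[Thm.~3.1.2]{Chao} directly to get that $\zeta$ (hence $g_{t,0}$ away from the boundary) is smooth; alternatively recover this from Kisin's construction of $S_K$ together with Grothendieck--Messing theory, identifying the tangent space of $S_0$ at a point with a Hodge-filtration-deformation space that surjects onto $\Lie H_k/\Lie P$. Third, the boundary: using the log crystalline structure of \cite{MP} and the explicit toroidal charts (with the key surjectivity $\frac{F}{p}(\Omega)+\mathrm{Im}\,F=\cH$ from \cite[Lem.~6.3.4]{GK}), show that the obstruction group vanishes and the Kodaira--Spencer/period map stays submersive along $S_0^\Sigma\setminus S_0$; finitely flatness of $\xi^\Sigma$ lets me import the Siegel computation, where the toroidal Kodaira--Spencer isomorphism is classical. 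Fourth, conclude that $g_{t,0}\colon S_0^\Sigma\to H_k$ is smooth of the expected relative dimension $\dim S_0^\Sigma-\dim(H_k/P)=\dim H-\dim P$ matching $\dim(H-\hbox{\rm Zip}^\chi)$, hence $\zeta^\Sigma$ is smooth; openness is then automatic. The main obstacle I anticipate is precisely the boundary analysis in Step 3: controlling the logarithmic Frobenius and connection in the toroidal charts well enough to see that the period map remains a submersion there, i.e.\ that passing to the compactification does not destroy the versality present on the interior. Everything else is either formal (Step 1), already in the literature for the open part (Step 2), or a transport along the finite flat $\xi^\Sigma$ from the well-understood Siegel case.
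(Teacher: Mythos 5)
Your Step 1 sets an impossible goal. You propose to deduce smoothness of $\zeta^\Sigma$ from smoothness (surjectivity on tangent spaces) of the framed map $g_{t,0}\colon \mathrm{Spec}(A_0)\to H_k$, but such a map can never be smooth: the source has dimension $\dim S_0^\Sigma=\dim H_k/P$, which is strictly smaller than $\dim H_k$, so $g_{t,0}$ cannot be submersive anywhere. Even on the interior, Zhang's theorem gives smoothness of the composite into the quotient stack $[E_{\mathcal{Z}_{\rm EO}}\backslash H_k]$, not of any lift to $H_k$; the missing tangent directions are supplied by the Lie algebra of the zip group $E_{\mathcal{Z}_{\rm EO}}$, so a correct argument must be carried out at the level of the stack. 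Consequently your Steps 3--4, which claim that log-crystalline deformations ``realize all tangent directions of $H_k$ via $g_{t,0}$'' and conclude that $g_{t,0}$ is smooth of an expected relative dimension, are false as stated, and the sufficient condition you reduce to can never be verified.

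The boundary treatment also fails as proposed. The morphism $\xi^\Sigma$ is finite but not flat (source and target have different dimensions in general), so the Siegel Kodaira--Spencer isomorphism cannot simply be transported to $S^\Sigma$; more decisively, at the boundary $\zeta^\Sigma$ is constant along the fibers of the boundary fibration over the smaller Shimura variety $S_0'$ (Corollary \ref{cor:pointnotintheimageofzetaSigma}), so no submersivity in your sense can hold there, and the tangent directions along those fibers contribute nothing---they must be absorbed by the zip group. This is exactly what the paper's proof does and what is missing from yours: via Proposition \ref{prop:reductionoverP} (trivializations preserving also the weight filtration form a $\overline{P}$-torsor) one shows $g_{t,0}$ lands in $R_{\FF_p}(A_0)$, and then Proposition \ref{prop:zetSigmaA0} factors $\zeta^\Sigma$ on the complete local ring at a boundary point as the projection to $S_0'$, followed by Zhang's smooth map $\zeta'$ for the boundary Shimura datum (attached to the abelian part $B$ of the degenerating $1$-motive), followed by the morphisms of zip stacks $\tau=\epsilon\circ\upsilon\circ\gamma$, whose smoothness (Lemma \ref{lemma:gammazipsmooth}) is a group-theoretic Lang-isogeny argument using crucially that the parabolic $R_{\ZZ_p}$ and its unipotent radical are defined over $\ZZ_p$. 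Without this reduction to the boundary Shimura variety and the smoothness of the auxiliary zip-stack maps, your versality/Kodaira--Spencer argument has no mechanism to produce surjectivity onto the tangent space of $H$-$\hbox{\rm Zip}^\chi$ at boundary points.
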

The rest of this section is devoted to the proof of the Theorem and of some of its consequences.  By construction $ S_0^\Sigma$ is a smooth scheme and $H-\hbox{\rm Zip}^\chi$ is a smooth stack by \cite[Cor. 3.12]{Fzips}. Hence, it suffices to prove that $\zeta^\Sigma$ is surjective on tangent spaces at every closed point $x_0\in S_0^\Sigma$. If $x_0$ lies in $S_0$ this is  the content of \cite[Thm. 4.1.2]{Chao}. Assume that $x_0\in S_0^\Sigma\backslash S_0$. Let $A$ be the complete local ring of $S^\Sigma$ at $x_0$. Denote by $\cH_A$ the Frobenius logarithmic crystal relative to the DP-Thickening $A_0=A/pA\subset A$ obtained by pull-back to $A$ of the logarithmic crystal $\cH$ relative to the DP-thickening $S_0^\Sigma\subset \cS^\Sigma$ defined in \S \ref{sec:EO}. 

\subsection{The structure at the boundary}\label{sec:boundary} As explained in \cite[Thm.  4.1.5]{MP} there exist  

\begin{itemize}
\item[i.]  a smooth integral model  $S'$ over $\WW(\kappa)$ associated to a Shimura datum $(G',X')$ of Hodge type  with hyperspecial level $K'$;
\item[ii.] a torsor $\overline{S}' \to S'$  under an abelian scheme $\mathcal{A}$ over$S'$;
\item[iii.]   a torsor $\widetilde{S} \to \overline{S}'$  under a torus $\mathbf{E}$ over $\WW(\kappa)$;
\item[iv] a smooth torus embedding $\widetilde{S}\subset \widetilde{S}^\Sigma$ relative to $\overline{S}'$ 
\end{itemize}

such that the complete local ring $A$ of $S^\Sigma$ at $x_0$ is isomorphic to the completion of $\widetilde{S}^\Sigma $ at a closed point $s_0$. 

\smallskip

Over $S'$ one has a universal principally polarized abelian scheme $B$. Define the open $U\subset {\rm Spec}(A)$ as the inverse image of $\widetilde{S} \subset \widetilde{S}^\Sigma$. By loc. cit. one has a degenerating $1$-motive $M_U$.  Write $M_U^\vee=[Y \to D_U]$ for the Cartier dual $1$-motive; then $D$ is a semiabelian scheme over $\overline{S}'$, extension of the principally polarized abelian scheme $B$ over $S'$ by a torus $T$ which has constant rank.   By the discussion in \cite[\S 4.3.1]{MP} the Frobenius logarithmic crystal $\cH_A$ is identified with the contravariant log Dieudonn\'e crystal of the mod $p$ reduction of $M_U$.

In particular, as an $A$-module $\cH_A$  is endowed with a logarithmic connecton $\nabla$, with a two step Hodge filtration $$0=F^2 \cH_A \subset F^1 \cH_A \subset F^0\cH_A=\cH_A$$ and a three step weight filtration $$ 0 \subset W_0 \cH_A \subset W_{1} \cH_A \subset W_{2} \cH_A=\cH_A$$consisting of Frobenius subcrystals such that
\begin{itemize}

\item[a.]   $W_{1} \cH_A$ is identified with the covariant de Rham realization \cite[\S 1.1.3]{MP} of $D$ over $A$, i.e.,  the Lie algebra of the universal extension of the  pull back of $D$ to  $A$ and the Frobenius is obtained from its identification with the covariant crystalline realization of the base change of $D$ to $A_0=A/pA$; 

\item[b.]  $F^1 \cH_A$ is identified with the non-trivial step of the Hodge filtration of the covariant de Rham realization of $D$ over $A$;

\item[c.]  $W_0 \cH_A$  is identified with the  Lie algebra of the toric part $T$ of $D$ and the Frobenius structure is defined by the identification with the covariant Dieudonn\'e module of the base change of $T$ to $A_0$. 

\item[d.]  the $A$ module ${\rm Gr}_{1}^W \cH_A$ is identified with de Rham homology of $B$ and its Frobenius structure arises from the identification with the covariant Dieudonn\'e module of its base change  $B_0$ to $A_0$; 

\item[e.]  ${\rm Gr}_{2}^W \cH_A$ is identified with with the  Lie algebra of the universal extension of $Y$ and the Frobenius structure is defined by the identification with the covariant Dieudonn\'e module of the \'etale $p$-divisible group $Y\otimes \QQ_p$ over $A_0$.

\end{itemize}

\subsection{The underlying algebraic groups}

We summarize the algebraic group counterpart of the description in \S \ref{sec:boundary}; see the discussion in \cite[\S 4.3.4]{MP}. The algebraic group $G'=N/U$, appearing in the Shimura datum $(G',X')$,  is the quotient of a normal subgroup $N$ of a  parabolic subgroup $R$ of $G$ by the unipotent radical $U$ of $N$ and $U$ is also the unipotent radical of $R$:

$$U\subset N \subset R \subset G.$$All these groups are defined over $\QQ$ and they extend to subgroups $$U_{\ZZ_p} \subset N_{\ZZ_p} \subset R_{\ZZ_p}\subset G_{\ZZ_p}$$with $R_{\ZZ_p}$  a parabolic subgroup  of the model $G_{\ZZ_p}$ of $G$ over $\ZZ_p$, $N_{\ZZ_p}$ a normal subgroup of $R_{\ZZ_p}$ and $U_{\ZZ_p} $ the unipotent radical of $R_{\ZZ_p}$ (equivalently of $N_{\ZZ_p}$). The quotient $N_{\ZZ_p}/U_{\ZZ_p} $  is a reductive group $G'_{\ZZ_p}$. It is a normal subgroup of the reductive group $G''_{\ZZ_p}:=R_{\ZZ_p}/U_{\ZZ_p}$ and  defines a model of $G'$ over $\ZZ_p$. In fact, the parabolic $R$ is defined by a cocharacter, called the weight cocharacter, $w\colon \mathbb{G}_{m,\QQ} \to G$ that factors via $N$. By \cite[Prop.~3.2.1 \& \S 3.2.2]{Chao}, after possibly conjugating $w$ by $N$, we may assume that its base change to $\QQ_p$  extends to a cocharacter $w_{\ZZ_p}\colon  \mathbb{G}_{m,\ZZ_p} \to N_{\ZZ_p}$.  Using the embedding $G_{\ZZ_p}\subset   \mathrm{GSp}(\Lambda \otimes \ZZ_p)$ the cocharacter $w_{\ZZ_p}$ defines a grading on  $\Lambda \otimes \ZZ_p$ and, hence, an ascending  filtration $W_\bullet \bigl(\Lambda \otimes \ZZ_p\bigr)$, called the weight filtration: $$\{0\}=W_{-1} \bigl(\Lambda \otimes \ZZ_p\bigr) \subset W_0 \bigl(\Lambda \otimes \ZZ_p\bigr) \subset W_1 \bigl(\Lambda \otimes \ZZ_p\bigr) \subset W_2 \bigl(\Lambda \otimes \ZZ_p\bigr)=\Lambda \otimes \ZZ_p.$$

Thanks to \cite[Prop.~12.1\& Lemma 12.2]{Pink} the Hodge cocharacter $\mu$ of $G$ can be chosen to factor through $N_E$, where $E$ is the reflex field of $\mathrm{Sh}_K(G,X)$, and such that it acts trivially on $U_E$ (via the adjoint action).  

\begin{lemma} There exists a cocharacter $\widetilde{\mu}$ of $G_{\WW(\kappa)}$ in the conjugacy class of $\mu$  that factors through $N_{\WW(\kappa)}$.
\end{lemma}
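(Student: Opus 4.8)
The plan is to reduce the statement to a question purely about the model $N_{\WW(\kappa)}$ and its cocharacters, combining the rational factorization already established (Pink's result, \cite[Prop.~12.1 \& Lemma~12.2]{Pink}, giving a representative $\mu$ of the Hodge conjugacy class factoring through $N_E$ and acting trivially on $U_E$) with an integral unramifiedness argument of the kind used for the Hodge cocharacter itself. First I would record that, by hypothesis, $v$ is unramified over $p$ so that $\widehat{\cO}_{E,v}\cong\WW(\kappa)$ and the conjugacy class $Z$ of $\mu$ has a $\WW(\kappa)$-point $\mu$; the point is to move this $\WW(\kappa)$-point \emph{inside} the conjugacy class so that it lands in the subgroup $N_{\WW(\kappa)}$.

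The key step is a conjugation argument over $\WW(\kappa)$. Over $E$ we already have some $\WW(\kappa)$-valued (in fact $\QQ_p$-valued after base change) representative, call it $\mu_E$, of the Hodge class that factors through $N_E$. Over $\WW(\kappa)$ the chosen representative $\widetilde{\mu}$ of $\S\ref{sec:EO}$ also factors through $G_{\WW(\kappa)}$ but a priori not through $N_{\WW(\kappa)}$. Since both are conjugate in $G_{\WW(\kappa)[p^{-1}]}=G_{\QQ_p\otimes\cdots}$, I would argue that the transporter scheme $\mathrm{Transp}_{G_{\WW(\kappa)}}(\widetilde{\mu},\mu_E)$ — the scheme of $g$ with $g\widetilde{\mu}g^{-1}=\mu_E$ — is a torsor under the smooth (since $p$ is unramified and $G_{\ZZ_p}$ reductive, the centralizers of cocharacters are smooth) centralizer $Z_{G_{\WW(\kappa)}}(\mu_E)$, and has a point over $\WW(\kappa)[p^{-1}]$; one then uses that $\WW(\kappa)$ is a complete DVR with algebraically closed residue field (or: Lang's theorem / Hensel plus vanishing of $H^1$ of the residue field with coefficients in a smooth connected group) to lift the generic point to an integral point. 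This produces $\widetilde{\mu}':=g\widetilde{\mu}g^{-1}$ over $\WW(\kappa)$ in the conjugacy class of $\mu$ and factoring through $N_{\WW(\kappa)}$, which is exactly the desired cocharacter. The triviality of the action on $U_E$ is automatic since $\widetilde{\mu}'$ is $G(\WW(\kappa)[p^{-1}])$-conjugate to $\mu_E$ and that property is checked generically.

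An alternative, possibly cleaner, route mirrors \cite[Prop.~3.2.1 \& \S 3.2.2]{Chao} and the treatment of the weight cocharacter $w$ above: there one conjugates $w$ \emph{by $N$} to extend it integrally to $w_{\ZZ_p}$. Analogously, inside $N_{\WW(\kappa)}$ one has the reductive quotient $G'_{\WW(\kappa)}=N_{\WW(\kappa)}/U_{\WW(\kappa)}$, and $\mu_E$ descends to a cocharacter $\mu'$ of $G'_E$. Since $(G',X')$ is again of Hodge type with hyperspecial level at $p$ (item (i) of \S\ref{sec:boundary}), the same unramifiedness input gives a $\WW(\kappa)$-representative of $\mu'$ in $G'_{\WW(\kappa)}$; one then lifts it along the smooth surjection $N_{\WW(\kappa)}\to G'_{\WW(\kappa)}$ (using that the kernel $U_{\WW(\kappa)}$ is a split unipotent group over $\WW(\kappa)$, so cocharacters lift), and finally conjugates back inside $G_{\WW(\kappa)}$ into the class of $\mu$ as in the previous paragraph. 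Either way, the main obstacle is the descent-to-integral-model step: one must ensure that the cocharacter, known to factor through $N$ only after base change to the generic fiber, can be conjugated into $N_{\WW(\kappa)}$ over the \emph{ring of integers} $\WW(\kappa)$ and not merely over $\QQ_p$ — this is where smoothness of the relevant centralizer/transporter schemes and the triviality of torsors over $\WW(\kappa)$ under smooth connected groups (residue field $k=\overline{\FF}_p$) are essential, and it is the only place where the hyperspecial/unramified hypotheses are really used.
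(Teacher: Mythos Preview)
Your second route is precisely the paper's proof: pass to the reductive quotient $G'_{\WW(\kappa)}=N_{\WW(\kappa)}/U_{\WW(\kappa)}$, invoke \cite[Prop.~3.2.1 \& \S 3.2.2]{Chao} to find an integral representative $\widetilde{\mu}'$ of the induced class $\mu'$, and lift it (uniquely, since $\mathrm{Hom}(\GG_m,U)=0$) back to $N_{\WW(\kappa)}$. The final step you add, ``conjugate back inside $G_{\WW(\kappa)}$ into the class of $\mu$'', is unnecessary: the original $\mu$ is itself a lift of $\mu'$ through $N_E\to G'_E$, and since lifts through a unipotent kernel are unique, any $N$-conjugation carrying $\mu'$ to $\widetilde{\mu}'$ already carries $\mu$ to your $\widetilde{\mu}$. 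So the lift is automatically in the $G$-conjugacy class of $\mu$.

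Your first route, via the transporter scheme $\mathrm{Transp}_{G_{\WW(\kappa)}}(\widetilde{\mu},\mu_E)$, has a genuine gap as written: the target $\mu_E$ is only defined over the generic fibre $\WW(\kappa)[p^{-1}]$, so this transporter does not exist as a $\WW(\kappa)$-scheme, and there is nothing to which Hensel or Lang can be applied to produce an integral point. You would first need an integral model of $\mu_E$ landing in $N_{\WW(\kappa)}$, which is exactly the statement to be proved. One can try to repair this by working instead with the closed subscheme of the (projective) conjugacy-class scheme $G_{\WW(\kappa)}/Z_G(\widetilde{\mu})$ cut out by the condition ``factors through $N_{\WW(\kappa)}$'', but then one must separately argue that this subscheme is smooth with non-empty special fibre, which leads you back to the reductive-quotient argument anyway. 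A small side remark: the residue field of $\WW(\kappa)$ is the \emph{finite} field $\kappa$, not $k=\overline{\FF}_p$; Lang's theorem still applies, but your parenthetical ``residue field $k=\overline{\FF}_p$'' is not quite right.
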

\begin{proof} First  consider the  cocharacter $\mu'$ of the reductive group $G'_E$ induced by $\mu$. Due to  \cite[Prop.~3.2.1 \& S 3.2.2]{Chao}  there exists a conjugate cocharacter $\widetilde{\mu}'$ that is defined for $G'_{\WW(\kappa)}$. As $N_{\WW(\kappa)} \to G'_{\WW(\kappa}$ has unipotent kernel the cocharacter $\widetilde{\mu}'$ lifts uniquely to the sought for cocharacter $\widetilde{\mu}$.  
\end{proof}

The cocharacter $\widetilde{\mu}$ defines an increasing filtration, called the Hodge filtration, $F^\bullet \bigl(\Lambda \otimes \WW(\kappa)\bigr)$: $$\{0\}=F^2 \bigl(\Lambda \otimes \WW(\kappa)\bigr)\subset F^1 \bigl(\Lambda \otimes \WW(\kappa)\bigr) \subset F^0 \bigl(\Lambda \otimes \WW(\kappa)\bigr)=\Lambda \otimes \WW(\kappa)$$such that $W_{0} \bigl(\Lambda \otimes \ZZ_p\bigr) \otimes_{\ZZ_p} \WW(\kappa)\subset F^1 \bigl(\Lambda \otimes \WW(\kappa)\bigr)\subset W_1 \bigl(\Lambda \otimes \ZZ_p\bigr) \otimes_{\ZZ_p} \WW(\kappa)$. 

We let $\widetilde{P}\subset G_{\WW(\kappa)}$, resp.~$\widetilde{P}'\subset G_{\WW(\kappa)}'$, resp.~$\widetilde{P}''\subset G_{\WW(\kappa)}''$ be the parabolic subgroups defined by the cocharacter $\widetilde{\mu}$ of $G_{\WW(\kappa)}$ and the induced coocharacters $\widetilde{\mu}'$ and $\widetilde{\mu}''$ of $G_{\WW(\kappa)}'$ and $G_{\WW(\kappa)}''$ respectively. The intersection $\overline{P}:=\widetilde{P} \cap R_{\WW(\kappa)}$ is a paraboplic subgroup of $G_{\WW(\kappa)}$; it is the parabolic subgroup containing $U_{\WW(\kappa)}$ and with image $\widetilde{P}''$ in $ G_{\WW(\kappa)}''=R_{\WW(\kappa)}/U_{\WW(\kappa)}$. 

\

Here is the relation between the two descriptions of the boundary, one via degenerating $1$-motives and the other via algebraic groups. Recall that the subgroup $G_{\ZZ_p}$ can be realized as the subgroup fixing a finte set  of elements $(s_\alpha)_\alpha$ in the tensor algebra of $\Lambda$. By \cite[Prop.~4.3.7]{MP} one has canonically defined  tensors $(s_{\alpha,\rm dR})_\alpha$ in the tensor algebra $\cH_A$.

\begin{proposition}\label{prop:reductionoverP}  The set of isomorphisms $\cH_A \cong \Lambda\otimes A$, sending the tensors   $(s_{\alpha,\rm dR})_\alpha$  to the tensors $(s_\alpha \otimes 1)_\alpha$, the weight filtration $W_\bullet \cH_A$ to the weight filtration $W_\bullet \Lambda\otimes A$ and the Hodge filtration $F^\bullet \cH_A$ to the Hodge filtration $F^\bullet (\Lambda\otimes \WW(\kappa))\otimes_{\WW(\kappa)} A$, defines a $\overline{P}$-torsor  for the \'etale topology on ${\rm Spec}(A)$.

\end{proposition}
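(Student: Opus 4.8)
The plan is to exhibit the set in question as a torsor under $\overline{P}$ by analyzing its structure Zariski-locally (or étale-locally) on $\Spec(A)$, and then checking that this local structure matches $\overline{P}$ exactly. First I would recall that, by \cite[Prop.~4.3.7]{MP}, the set $J(A)$ of all isomorphisms $\cH_A\cong\Lambda\otimes A$ sending $(s_{\alpha,\mathrm{dR}})_\alpha$ to $(s_\alpha\otimes 1)_\alpha$ is a $G_{\ZZ_p}$-torsor over $\Spec(A)$; this is the torsor $J$ from \S\ref{sec:EO}(iv). The claimed set is the subset of $J(A)$ of those isomorphisms additionally respecting the weight filtration $W_\bullet$ and the Hodge filtration $F^\bullet$. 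So the task reduces to showing: (1) this subset is non-empty étale-locally; (2) its stabilizer inside $G_{\ZZ_p}(A)$, acting by composition, is precisely $\overline{P}(A)$; and (3) the subset is stable under that stabilizer, so it becomes a $\overline{P}$-torsor.

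For step (2), the key input is the compatibility of filtrations established in \S\ref{sec:boundary}: the weight filtration $W_\bullet\cH_A$ and the Hodge filtration $F^\bullet\cH_A$ correspond under any such isomorphism to the weight filtration $W_\bullet(\Lambda\otimes\ZZ_p)\otimes A$ defined by the weight cocharacter $w_{\ZZ_p}$ and the Hodge filtration $F^\bullet(\Lambda\otimes\WW(\kappa))\otimes A$ defined by $\widetilde\mu$, with the nesting $W_0\otimes_{\ZZ_p}\WW(\kappa)\subset F^1\subset W_1\otimes_{\ZZ_p}\WW(\kappa)$ (stated at the end of \S\ref{sec:AZD} after the lemma on $\widetilde\mu$). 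An element $g\in G_{\ZZ_p}(A)$ preserves the weight filtration $W_\bullet(\Lambda\otimes A)$ exactly when it lies in $R_{\WW(\kappa)}(A)$ — this is the defining property of $R$ as the parabolic attached to the weight cocharacter $w$ — and it preserves the Hodge filtration $F^\bullet(\Lambda\otimes A)$ exactly when it lies in $\widetilde P(A)$, by the defining property of $\widetilde P=P_{\widetilde\mu}$. Hence the common stabilizer is $R_{\WW(\kappa)}(A)\cap\widetilde P(A)=\overline P(A)$, which by definition is the parabolic subgroup containing $U_{\WW(\kappa)}$ with image $\widetilde P''$ in $G''_{\WW(\kappa)}$. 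Step (3) is then formal: if one isomorphism in the set is composed with $g\in\overline P(A)$, the result still respects all three structures, and conversely two elements of the set differ by an element of the stabilizer.

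For step (1), non-emptiness, I would argue étale-locally on $\Spec(A)$: since $J$ is a $G_{\ZZ_p}$-torsor, choose étale-locally a trivialization $\cH_A\cong\Lambda\otimes A$ matching the tensors. Under this trivialization the weight and Hodge filtrations of $\cH_A$ become two filtrations of $\Lambda\otimes A$ by direct summands, cut out by tensors, hence of the same type as the standard ones; because $G_{\ZZ_p}$ acts transitively (étale-locally, or even Zariski-locally since the relevant flag/partial-flag schemes are smooth with sections after a further localization) on pairs of filtrations of the fixed type satisfying the prescribed nesting, one can further adjust the trivialization by an element of $G_{\ZZ_p}(A)$ to also match the two filtrations. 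This uses that the weight cocharacter and the Hodge cocharacter both factor through $N$ (resp.\ are conjugate into $N_{\WW(\kappa)}$), which guarantees that the pair of filtrations actually arises from a single cocharacter and hence is in the single $G_{\ZZ_p}$-orbit of the standard pair. The main obstacle I anticipate is precisely this transitivity/orbit statement at the integral (rather than generic) level: one must be careful that $\overline P$ is genuinely smooth over $\WW(\kappa)$ — which it is, being a parabolic of the reductive $G_{\ZZ_p}$ after the reduction in \cite[\S 4.3.4]{MP} — and that the scheme of "isomorphisms respecting the two filtrations" is smooth with surjective structure map, so that sections exist étale-locally; this is where one leans on the integral models of $R$, $N$, $U$ and the integrality of the weight cocharacter recorded in \cite[Prop.~3.2.1 \& \S 3.2.2]{Chao}.
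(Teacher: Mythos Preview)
Your steps (2) and (3) are fine and match the paper's setup: once one has a single isomorphism respecting all three structures, the set is visibly a torsor under the stabilizer $\widetilde P\cap R_{\WW(\kappa)}=\overline P$. The paper in fact starts one step further along, quoting \cite[Prop.~4.3.9]{MP} to already have the $\widetilde P$-torsor $I$ of isomorphisms matching tensors and Hodge filtration, and then only needs to cut down to those matching the weight filtration.

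The gap is in your step (1). You assert that under a tensor-preserving trivialization the transported weight filtration on $\Lambda\otimes A$ is ``of the same type as the standard one'' and lies in the $G_{\ZZ_p}$-orbit of the standard weight filtration. This is exactly the content of the proposition, and it is not formal: $G$ is not $\mathrm{GL}$, so knowing the ranks of the graded pieces does not place the filtration in a single $G$-orbit, and nothing you cite produces a cocharacter of $G$ splitting the transported filtration. Your appeal to ``the weight cocharacter and the Hodge cocharacter both factor through $N$'' concerns the \emph{standard} filtrations on $\Lambda$, not the a priori unknown filtration coming from $\cH_A$; invoking it here is circular.

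The paper supplies two genuine ingredients you are missing. First, it uses the residue of the logarithmic connection on $\cH_A$ to produce a nilpotent monodromy operator $N_A$ with $N_A^2=0$, $\mathrm{Ker}\,N_A=W_1\cH_A$, $\mathrm{Im}\,N_A=W_0\cH_A$; since $N_A$ annihilates the de~Rham tensors, its transport $N=j\circ N_A\circ j^{-1}$ lies in $\mathrm{Lie}\,G(A)$, and this forces the transported weight filtration to be split by \emph{some} cocharacter $w_A$ of $G_A$. Second, to identify the conjugacy class of $w_A$ with that of $w$, the paper passes to a characteristic-zero point in the same boundary component, base-changes to $\CC$, and uses the variation of mixed Hodge structures over $\widetilde S^\Sigma_\CC$ (whose underlying $\QQ$-vector space is $\Lambda\otimes\QQ$ with its weight filtration) to exhibit a point of $J$ there; faithful flatness and \cite[Prop.~3.2.1]{Chao} on the scheme of conjugacy classes of cocharacters then spread the conclusion back over $A$. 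Neither the monodromy step nor the transcendental comparison is visible in your outline, and without them the non-emptiness claim does not go through.
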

\begin{proof} If we forget the compatibility with the weight filtrations, this is the content of \cite[Prop.~4.3.9]{MP}. The isomorphisms preserving tensors and Hodge filtrations are proven in loc.~cit.~to be a $\widetilde{P}_A$-torsor $I$ over ${\rm Spec}(A)$. Let $J\subset I$ be the subscheme of isomorphisms preserving also the weight filtration. 
Take the base change of  $\widetilde{S}^\Sigma$ via $\WW(\kappa)\to \WW(k)$ with $k$ an algebraic closure of $\kappa$  and replace $A$ with the complete local ring $\overline{A}$ of this base change at a point over $s_0$. Since $A\to \overline{A}$ is faithfully flat, it suffices to show that $J$ is a $\overline{P}$-torsor over an fpqc cover of ${\rm Spec}(\overline{A})$. In order to simplify the notation we simply write $A$ in place of $\overline{A}$. As now $A$ is complete local with algebraically closed residue field, $I(A)$ is non empty and $ I$ is the trivial $\widetilde{P}$-torsor. It suffices to show that $J(A)$ is non-empty as, then, $J$ is the trivial $\overline{P}$-torsor over ${\rm Spec}(\overline{A})$.

The tensors $(s_{\alpha,\rm dR})_\alpha$  are horizontal for the logarithmic connection $\nabla$ on $\cH_A$. The residue of the connection defines the monodromy operator $N_A$ on $\cH_A$. This is an $A$-linear operator such that $N_A^2=0$,  the kernel is $W_1 \cH_A$ and the image is $W_0 \cH_A$. Given an isomorphism $j\colon \cH_A \cong \Lambda\otimes A$ in $I(A)$ the operator $N:=j \circ N_A \circ j^{-1}$ annihilates the tensors $s_\alpha \otimes 1$ and, hence, lies in ${\rm Lie}\, G(A)$.  The weight filtration $j(W_\bullet \cH_A)$, defined by the image of $W_\bullet \cH_A$ via $j$, coincides with $0\subset {\rm Im}(N) \subset {\rm Ker}(N) \subset \Lambda\otimes A$ and, in particular, it is split by a cocharacter $w_A\colon \mathbb{G}_{m,A}\to G_A$; c.f. the proof of \cite[Lemma 3.5.3]{MP}.

Lift $s_0$ to a $\WW(k)[1/p]$-point $s$ of $\widetilde{S}^\Sigma$  lying in the same boundary component as $s_0$. Let $\widehat{A}_s$ be the complete local ring of  $\widetilde{S}^\Sigma$ at $s$. We claim that $J\vert_{\widehat{A}_s}$ is a torsor over ${\rm Spec}(\widehat{A}_s)$.  Take an embedding $\WW(k)[1/p]\subset \CC$. Let $\widehat{B}_s$ be the complete local ring of $\widetilde{S}^\Sigma\otimes_{\WW(\kappa)} \CC$ at the $\CC$-valued point associated to $s$. By construction  we have a variation of mixed Hodge structures over  $\widetilde{S}^\Sigma\otimes_{\WW(\kappa)} \CC$ whose de Rham realization over $\widehat{B}_s$ is the base change of $\cH_A$ and whose underlying $\QQ$-vector space is $\Lambda \otimes \QQ$ so that $J(\widehat{B}_s)$ is non-empty; see the discussion in \cite[\S 2.1]{MP}.   The extension $ \widehat{A}_s \to \widehat{B}_s$ is faithfully flat as it  is local and $ \widehat{B}_s$ is obtained from $ \widehat{A}_s$ by a flat base change $\WW(k)\to \CC$, localization and completion. This proves that the cocharacters $w_A$ and $w$ of $G$ are conjugate over $A_s$. It follows from \cite[Prop. 3.2.1]{Chao} that the conjugacy classes of cocharacters of $G_A$ are the disjoint union of copies of ${\rm Spec}(A)$. We deduce that $w_A$ and $w$ are conjugate over an fpqc cover of $A$, i.e., the filtrations  $j(W_\bullet \cH_A)$  and $W_\bullet \Lambda$ are conjugate under $G_A$ over an fpqc cover of $A$. Over this $J$ becomes a $\overline{P}$-torsor as wanted.

\end{proof}

\subsection{The proof}

Consider the parabolic subgroups $\overline{P}_\kappa \subset P \subset H=G_\kappa$ which are the reduction of $\overline{P} \subset\widetilde{P}\subset G_{\WW(\kappa)}$. Fix a Borel subgroup $B\subset \overline{P}_\kappa$ and a maximal torus $T$. Let $H'':=G''_{\FF_p}$, resp.~$H'=G'_{\FF_p}$ be the reductive quotient of   the mod $p$-reduction $R_{\FF_p}$ of $R$, resp.~of   the mod $p$-reduction $N_{\FF_p}$ of $N$. Recall that $H'\subset H''$ is a normal subgroup.  

Let $\chi'$ be the mod $p$ reduction of $\widetilde{\mu}'$. It is a cocharacter of $H':=G'_{\FF_p}$ and, hence, also of $H''$ defined over $\kappa$. By functoriality we get a map $$\gamma\colon H'-\hbox{{\rm Zip}}^{\chi'} \longrightarrow H''-\hbox{{\rm Zip}}^{\chi''}.$$The cocharacter $\chi'$ defines a pair of opposite parabolic subgroups $P'_{\pm}$ and $P''_{\pm}$ of $H'_\kappa$ and $H''_\kappa$ respectively. Let $\mathcal{Z}'=(P'_+,(P'_-)^{(p)},\varphi)$ and $\mathcal{Z}''=(P''_+,(P''_-)^{(p)},\varphi)$ be the two associated zip data. Recall that $H'-\hbox{{\rm Zip}}^{\chi'}\cong [E_{\mathcal{Z'}}\backslash H_k'] $ and  $H''-\hbox{{\rm Zip}}^{\chi''}\cong [E_{\mathcal{Z''}}\backslash H_k''] $. The inclusion $H'\subset H''$ identifies $P'_{\pm}=P''_{\pm} \cap H'_\kappa$ and induces a map of zip data $\mathcal{Z}'\to \mathcal{Z}''$ and a map of quotient stacks  $$\gamma\colon \bigl[E_{\mathcal{Z'}}\backslash H_k'\bigr]\longrightarrow \bigl[E_{\mathcal{Z''}}\backslash H_k''\bigr]$$ which provides a different realization of $\gamma$.

We define other zip data  for $H_k$ that will be relevant for our discussion. The first is $\mathcal{Z}:=\bigl(P,P_-^{(p)}, \varphi\bigr)$. The second is $\overline{\mathcal{Z}}:=\bigl(\overline{P}_\kappa,\overline{P}_{\kappa,-}^{(p)}, \varphi\bigr)$. Note that $E_{\overline{\mathcal{Z}}} \subset E_{\mathcal{Z}}$ so that we get a map $$\epsilon\colon  \bigl[E_{\overline{\mathcal{Z}}}\backslash H_k\bigr]\longrightarrow \bigl[E_{\mathcal{Z}}\backslash H_k\bigr].$$Identifying $H''_k$ with the standard Levi factor of $R_{\FF_p}$ (for our choices of the Borel $B$ and the torus $T$), we have an inclusion $H''_\kappa\subset R_\kappa \subset H_\kappa$ and $P''_+=\overline{P}_\kappa\cap H''_\kappa$ and $P''_-=\overline{P}_{-,\kappa}\cap H''_\kappa$. Then we get a map of stacks $$\upsilon \colon  \bigl[E_{\mathcal{Z}''}\backslash H_k''\bigr] \longrightarrow  \bigl[E_{\overline{\mathcal{Z}}}\backslash H_k\bigr]. $$

\begin{lemma}\label{lemma:gammazipsmooth} The morphisms $\gamma $, $\epsilon$, $\upsilon$ and, hence, the composite $\tau:=  \epsilon \circ \upsilon \circ \gamma$    are  smooth.

\end{lemma}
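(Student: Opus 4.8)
The plan is to prove the smoothness of each of the three maps $\gamma$, $\epsilon$, $\upsilon$ separately, since smoothness is preserved under composition, and then conclude for $\tau$. The common theme is that each map is induced by a morphism of zip data coming from an inclusion of algebraic groups, so in each case I would reduce to checking that the induced map on the presenting schemes (the underlying reductive groups $H_k$, $H_k'$, $H_k''$) is smooth and equivariant for the relevant zip-group actions, and then use the fiber-product description of quotient-stack morphisms exactly as in the proof of Proposition \ref{prop:gammabar}.

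First I would handle $\gamma\colon [E_{\mathcal{Z}'}\backslash H_k']\to [E_{\mathcal{Z}''}\backslash H_k'']$. Here $H'\subset H''$ is a \emph{normal} subgroup, so $H_k''$ is (up to isogeny / central factors) the product of $H_k'$ with a complementary torus, and the zip groups satisfy $E_{\mathcal{Z}'}=E_{\mathcal{Z}''}\cap(H_k'\times H_k')$, with the inclusion $P'_\pm=P''_\pm\cap H_k'$ compatible with Frobenius. The morphism of stacks is realized by the cartesian square whose top arrow is $E_{\mathcal{Z}''}\times^{E_{\mathcal{Z}'}}H_k'\to H_k''$, $((p,q),h)\mapsto p h q^{-1}$; as in Proposition \ref{prop:gammabar} the source and target are smooth $k$-schemes, so it suffices to see this map is surjective on tangent spaces, which follows because the natural inclusion $H_k'\hookrightarrow E_{\mathcal{Z}''}\times^{E_{\mathcal{Z}'}}H_k'\to H_k''$ already realizes $H_k'$ as (a translate of) a smooth subgroup meeting $H_k''$ transversally: the missing directions in $\Lie H_k''$ relative to $\Lie H_k'$ are central torus directions, which are swept out by the $E_{\mathcal{Z}''}$-action. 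For $\epsilon$ the situation is cleanest: $\overline{P}_\kappa\subset P$ and $\overline{P}_{\kappa,-}^{(p)}\subset P_-^{(p)}$ with the same Frobenius isogeny, so $E_{\overline{\mathcal{Z}}}\subset E_{\mathcal{Z}}$ is a \emph{parabolic-in-parabolic} inclusion, and $\epsilon$ is given by the cartesian square with top map $E_{\mathcal{Z}}\times^{E_{\overline{\mathcal{Z}}}}H_k\to H_k$, $((p,q),h)\mapsto phq^{-1}$; since $E_{\mathcal{Z}}\to E_{\mathcal{Z}}/E_{\overline{\mathcal{Z}}}$ is smooth (quotient of smooth groups by a smooth subgroup) and the map already restricts to the identity $H_k\hookrightarrow E_{\mathcal{Z}}\times^{E_{\overline{\mathcal{Z}}}}H_k\to H_k$, it is surjective on all tangent spaces, hence smooth. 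The same pattern gives $\upsilon$: identifying $H_k''$ with the standard Levi of $R_\kappa$ and using $P_+''=\overline{P}_\kappa\cap H_k''$, $P_-''=\overline{P}_{-,\kappa}\cap H_k''$, the inclusion $H_k''\hookrightarrow H_k$ is a closed immersion of smooth groups and induces $E_{\mathcal{Z}''}\hookrightarrow E_{\overline{\mathcal{Z}}}$; the morphism $\upsilon$ is realized by the cartesian square with top arrow $E_{\overline{\mathcal{Z}}}\times^{E_{\mathcal{Z}''}}H_k''\to H_k$, and smoothness again reduces to tangent-space surjectivity, where the extra directions of $\Lie H_k$ over $\Lie H_k''$ are supplied by $R_u\overline{P}_\kappa$ and its opposite, which lie in $\Lie E_{\overline{\mathcal{Z}}}$ acting on $H_k$.

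The main obstacle I anticipate is bookkeeping the precise relations among the five parabolics and the three pairs of zip groups so that each of the three maps really is of the clean ``translated inclusion of smooth groups acting transversally'' shape — in particular verifying that the Frobenius isogenies in $\mathcal{Z}'$, $\mathcal{Z}''$, $\overline{\mathcal{Z}}$, $\mathcal{Z}$ are genuinely compatible under the inclusions (so that $E_{\mathcal{Z}'}=E_{\mathcal{Z}''}\cap(H_k'\times H_k')$ and $E_{\overline{\mathcal{Z}}}\subset E_{\mathcal{Z}}$, $E_{\mathcal{Z}''}\subset E_{\overline{\mathcal{Z}}}$ as claimed), which rests on the group-theoretic description in \S\ref{sec:boundary} and \cite[\S 4.3.4]{MP}, together with the fact that the cocharacters $\widetilde\mu$, $\widetilde\mu'$, $\widetilde\mu''$ are compatibly chosen. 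Once these identifications are in place, each smoothness statement is a verbatim repetition of the argument for Proposition \ref{prop:gammabar}: write the map of stacks as a cartesian square over a smooth scheme map of smooth $k$-schemes, reduce to surjectivity on tangent spaces at $k$-points, and observe that a subgroup already maps onto (a translate of) the target so that the differential is surjective. Finally, since $\gamma$, $\epsilon$, $\upsilon$ are smooth and composition of smooth morphisms of stacks is smooth, $\tau=\epsilon\circ\upsilon\circ\gamma$ is smooth, completing the proof.
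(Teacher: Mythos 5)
Your overall strategy is the same as the paper's (realize each stack morphism by a cartesian square as in Proposition \ref{prop:gammabar} and check surjectivity on tangent spaces), and your treatment of $\epsilon$ and the final composition step are fine. But your argument for $\gamma$ rests on a false premise: the quotient $H'''=H''/H'$ is \emph{not} a torus in general (already in the Siegel case it is essentially a general linear group acting on a graded piece of the weight filtration), so the claim that "the missing directions in $\Lie H_k''$ relative to $\Lie H_k'$ are central torus directions" is wrong, and with it your justification of transversality. The correct input is that $\chi'$ factors through $H'$ and therefore induces the trivial cocharacter of $H'''$, so that $P''_{\pm}$ surject onto $H'''$ (equivalently $P'_{\pm}=P''_{\pm}\cap H'_k$ is normal in $P''_{\pm}$ with quotient $H'''$); this is what makes the complementary directions available. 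The paper packages this by observing that $F=E_{\mathcal{Z}''}\times^{E_{\mathcal{Z}'}}H_k'$ and $H_k''$ are both $H_k'$-torsors over $H'''_k$ and that the induced map on $H'''_k$ is the Lang isogeny $h\mapsto h\varphi(h)^{-1}$, which is \'etale because Frobenius has vanishing differential. Without one of these two arguments your proof of smoothness of $\gamma$ does not close.

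For $\upsilon$ there is a similar, more localized gap: you say the extra directions are supplied by ${\rm R}_u\overline{P}_\kappa$ "and its opposite, which lie in $\Lie E_{\overline{\mathcal{Z}}}$". The second factor of $E_{\overline{\mathcal{Z}}}$ is the Frobenius twist $\overline{P}_{\kappa,-}^{(p)}$, and ${\rm R}_u\overline{P}_{\kappa,-}$ is in general \emph{not} contained in it, because $\chi$, hence $\overline{P}_{\kappa,-}$, is only defined over $\kappa$ and not over $\FF_p$. What saves the argument — and what the paper singles out as the key point in the remark following the lemma — is that $\Lie H_k=\Lie H_k''\oplus\Lie U_{\FF_p}\oplus\Lie U_{\FF_p,-}$, where $U_{\FF_p}$ and $U_{\FF_p,-}$ are the unipotent radicals of $R_{\FF_p}$ and of its opposite; these are defined over $\FF_p$ (the weight cocharacter is defined over $\QQ$), so $U_{\FF_p,-}=U_{\FF_p,-}^{(p)}\subset {\rm R}_u\overline{P}_{\kappa,-}^{(p)}$, and only therefore do both summands lie in the image of $\Lie E_{\overline{\mathcal{Z}}}$. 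With these two corrections (triviality of $\chi'$ on $H'''$, resp.\ rationality of $R$ and its unipotent radical over the prime field) your argument becomes essentially the paper's proof.
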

\begin{proof} 
We start with $\gamma$. Consider the push forward $F:=E_{\mathcal{Z}'}\backslash \bigl( E_{\mathcal{Z}''} \times_k H_k'\bigr)$ of the inclusion $E_{\mathcal{Z}'}\to E_{\mathcal{Z}''} $ via the group homomorphism $E_{\mathcal{Z}'}\to H_k'$. Here $E_{\mathcal{Z}'}$ acts on $ E_{\mathcal{Z}''} \times_k H_k'$ via $\bigl(\gamma,(\alpha,\beta)\bigr)\mapsto (\alpha \gamma^ {-1}, \gamma \beta)$.  The  projection $E_{\mathcal{Z}''} \times_k H_k' \to H_k'$ compsed with $H_k'\to [E_{\mathcal{Z}'}\backslash H_k'] $ defines a map $F\to [E_{\mathcal{Z}'}\backslash H_k']$. The inclusion $H_k'\subset H_k''$ induces a $E_{\mathcal{Z}''}$-equivariant map $\delta\colon F\to H_k''$ where $E_{\mathcal{Z}''}$ act on $F$ thoriugh left multiplication on the first factor of $E_{\mathcal{Z}''} \times_k H_k'$. We then have a commutative diagram 
$$\begin{matrix} 
 F & \stackrel{\delta}{\longrightarrow} & H_k'' \cr 
\downarrow & & \downarrow\cr
\bigl[E_{\mathcal{Z}'}\backslash H_k'\bigr] &\stackrel{\gamma}{\longrightarrow} & \bigl[E_{\mathcal{Z}''}\backslash H_k''\bigr] .\cr
\end{matrix}$$
For any scheme $S$, any $S$-valued point of the fibre product is, locally for the fppf topology, defined by the trivial $ E_{\mathcal{Z}'}$-torsor together with the map $E_{\mathcal{Z}'} \to H_k'$ given by right multiplication by $\beta\in H_k'(S)$, the trivial $ E_{\mathcal{Z}''}$-torsor together with the map $E_{\mathcal{Z}''} \to H_k''$ given by right multiplication by $\epsilon\in H_k''(S)$ and an elemnt $\alpha\in E_{\mathcal{Z}''}(T)$ such that $\epsilon=\alpha \beta\in H_k''(S)$.  Hence, $F$ is the fiber-product.

In order to prove that $\gamma$ is smooth, it suffices to show that the map $\delta$ is smooth. Recall that $H'$ is normal in $H''$ with reductive quotient $H'''$.  Hence $P'_{\pm}=P''_{\pm}\cap H'_\kappa$ is normal in $P''_{\pm}$ with quotient $H'''_\kappa$ as $\chi'$ defines the trivial character of $H'''$. Similarly $(P'_-)^{(p)}$ is normal in $(P''_-)^{(p)}$ with quotient $(H'''_\kappa)^{(p)}$. We deduce that $F$, via the right multiplication by $H_k'$, is an $H_k'$-torsor over $E_{\mathcal{Z'}}\backslash  E_{\mathcal{Z''}}$ which is identified with  $H'''_\kappa$ (embedded in $H'''_\kappa \times_k (H'''_\kappa)^{(p)}$ via the graph of Frobenius). Both $F$ and $H_k''$ are right $H_k'$-torosrs over $H_k'''$. In particular, also $F$ is a smooth $k$-scheme and the smoothness of $\delta$ is equivalent to the surjectivity of $\delta$ is on tangent spaces at closed points. The map $H_k'''\cong F/H_k'\to H_k''/H_k'\cong H_k'''$, induced by $\delta$, is Lang isogeny $h\mapsto h (h^{p})^{-1}$ which is sujective and separable. Hence $\delta$ induces an  isomorphism on tangent spaces as wanted..

As the morphism $\epsilon$ is a quotient morphism of smooth stacks, it is smooth. 

We are left to prove the smoothness of $\upsilon$. Let $M:=E_{\mathcal{Z}''}\backslash \bigl( E_{\overline{\mathcal{Z}}} \times_k H_k''\bigr)$ be the push forward of  the inclusion $E_{\mathcal{Z''}}\to E_{\overline{\mathcal{Z}}} $ via the group homomorphism $E_{\mathcal{Z''}} \to H''_\kappa$. The inclusion  $H''\subset H$ defines a $E_{\overline{\mathcal{Z}}}$-equivariant map $\tau\colon M\to H_k$ and arguing as before we get that $M$  is the fiber product
$$\begin{matrix} 
 M & \stackrel{\tau}{\longrightarrow} & H_k \cr 
\downarrow & & \downarrow\cr
\bigl[E_{\mathcal{Z}''}\backslash H_k''\bigr] & \stackrel{\upsilon}{\longrightarrow} & \bigl[E_{\overline{\mathcal{Z}}}\backslash H_k\bigr] .\cr
\end{matrix}$$
The smoothness of $\upsilon$ is then equivalent to the smoothness of $\tau$. As $\tau$ is a morphism of schemes smooth  over $k$, it suffices to show that it is surjective on tangent spaces at $k$-valued points and, by right translatiion by $E_{\overline{\mathcal{Z}}} \times_k H_k''$,  it suffices to show this at the $0$-sections.  Recall that the parabolic subgroup $R$ of $G_{\ZZ_p}$ is defined over $\ZZ_p$ and it has unipotent radical $U_{\ZZ_p}$ also defined over $\ZZ_p$. Let $U_{\FF_p}$ be its reduction modulo $p$. Then $R_{\FF_p} \cap R_{\FF_p,-}=H''$ and it follows that ${\rm Lie} H={\rm Lie} H'' \oplus {\rm Lie} U_{\FF_p} \oplus {\rm Lie} U_{\FF_p,-}$. Notice that $U_{\FF_p}$ is contained in the unipotent radical of $\overline{P}_\kappa$. Similarly $U_{\FF_p,-}$ is the unipotent radical of  $\bigl(R_{\FF_p,-} \bigr)^{(p)}$ and is contained in the unipotent radical of $\overline{P}_{\kappa,-}^{(p)}$. Hence,  ${\rm Lie} U_{\FF_p} \oplus {\rm Lie} U_{\FF_p,-}$ is contained in the image of ${\rm Lie}  E_{\overline{\mathcal{Z}}} $. Thus $\tau$ induces a surjective map on tangent spaces at the $0$-sections as wanted.

\end{proof}

\begin{remark}  In the proof of Lemma \ref{lemma:gammazipsmooth} the fact that the weight cocharacter is defined over $\QQ$, so that  $R_{\ZZ_p}$ and its unipotent radical are defined over $\ZZ_p$, is the key point of the proof. 

\end{remark}

Over $S'$ one has an abelian scheme $B$. The covariant  Dieudonn\'e module of its mod $p$ reduction $B_0$ defines an associated map $$\zeta'\colon S'_0\to  H'-\hbox{{\rm Zip}}^{\chi'}$$which is smooth by \cite[Cor. 3.12]{Chao}.  As $\widetilde{S}_0^\Sigma\to S'_0 $ is smooth, the composite map $\zeta^{',\Sigma}\colon \widetilde{S}_0^\Sigma\to S'_0 \to H'-\hbox{{\rm Zip}}^{\chi'}$ is smooth as well.

\begin{proposition}\label{prop:zetSigmaA0} The map  $\zeta^\Sigma\colon {\rm Spec}(A_0)\to  H-\hbox{{\rm Zip}}^\chi$ of Lemma \ref{lemma:zetaSigma}  is the restriction to ${\rm Spec}(A_0)$ of the  morphism  $\tau \circ \zeta'$ with $\tau:=\gamma \circ \epsilon \circ \upsilon$. In particular, it  is surjective on tangent spaces at the closed point. 

\end{proposition}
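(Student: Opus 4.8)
The plan is to trace through the construction of $\zeta^\Sigma(A_0)$ in \S\ref{sec:EO} and identify it, step by step, with the composite $\tau\circ\zeta'$. The key observation is that the crystal $\cH_A$ carries the weight filtration of \S\ref{sec:boundary}, and by Proposition \ref{prop:reductionoverP} the torsor of trivializations respecting tensors, the Hodge filtration \emph{and} the weight filtration is a torsor under $\overline{P}_\kappa$ (after reduction mod $p$). So the $H$-zip $\zeta^\Sigma(A_0)$, which a priori only remembers the $P$-torsor $I_+$ and $Q$-torsor $I_-$, in fact refines canonically to an $\overline{\cZ}$-structure: the sub-$\overline{P}_\kappa$-torsor $\overline{I}_+\subset I_+$ of trivializations respecting also the weight filtration, and likewise $\overline{I}_-\subset I_-$. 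First I would check that the Frobenius $\gamma_0$ and the isomorphism $\iota$ respect these weight filtrations — this is immediate since the weight filtration consists of Frobenius subcrystals (items a--e of \S\ref{sec:boundary}) — so that $(\overline{I},\overline{I}_+,\overline{I}_-,\overline\iota)$ defines a point of $[E_{\overline{\cZ}}\backslash H_k]$ lying over $\zeta^\Sigma(A_0)$ via $\epsilon$. This exhibits $\zeta^\Sigma|_{{\rm Spec}(A_0)}$ as $\epsilon\circ(\text{something valued in }[E_{\overline{\cZ}}\backslash H_k])$.

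Next I would identify that "something." Passing to the graded pieces of the weight filtration, item (d) of \S\ref{sec:boundary} says ${\rm Gr}^W_1\cH_A$ is the covariant Dieudonné module of $B_0$; and $W_0\cH_A$, ${\rm Gr}^W_2\cH_A$ (items c, e) carry the Dieudonné structures of a multiplicative and an étale $p$-divisible group, on which Frobenius is, up to the normalizations in the construction of $g_t$, an isomorphism — so they contribute nothing beyond the Levi datum and are absorbed by $\upsilon$. Concretely, the reductive quotient $H''_\kappa$ of $\overline{P}_{\kappa}$ (a Levi of $R_\kappa$) splits as the block structure of the weight filtration, and the induced $H''$-zip is, by construction, the one whose $H'$-part (the abelian-variety block) is $\zeta'(s_0\bmod p)$ — i.e.\ it is $\gamma(\zeta'|_{{\rm Spec}(A_0)})$, followed by the map $\upsilon$ that re-embeds an $H''$-zip as an $\overline{\cZ}$-zip for $H_k$. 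Chaining $\epsilon\circ\upsilon\circ\gamma=\tau$ gives $\zeta^\Sigma|_{{\rm Spec}(A_0)}=\tau\circ\zeta'$. The "in particular" then follows: $\zeta'$ is smooth by \cite[Cor.~3.12]{Chao} (equivalently Theorem \ref{thm:mainthm2}'s interior case), hence surjective on tangent spaces at the closed point; $\tau$ is smooth by Lemma \ref{lemma:gammazipsmooth}; a composite of smooth morphisms is smooth, so it is surjective on tangent spaces there.

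The main obstacle I expect is the bookkeeping in the middle step: verifying that the $H$-zip produced by the recipe of \S\ref{sec:EO} (using a chosen Frobenius lift $\varphi$ on $A$ and a chosen $t\in J_+(A)$, with the rescaling $\frac{F}{p}$ on $\Lambda_0$ and $F$ on $\Lambda_{-1}$) matches \emph{on the nose} the $H$-zip obtained from $\zeta'$ by pushforward along $\tau$. This requires unwinding the identification in \cite[\S4.3.1]{MP} of $\cH_A$ with the contravariant log Dieudonné crystal of $M_U$, checking the compatibility of the Hodge cocharacter $\widetilde\mu$ factoring through $N_{\WW(\kappa)}$ with the block decomposition, and matching the normalization $\frac{F}{p}$ with the fact that on the toric block $W_0$ Frobenius is already divisible appropriately while on ${\rm Gr}^W_2$ it is a bijection — i.e.\ that the $g_t$ built for $H$ restricts on the middle graded piece to the $g_t$ one would build for $H'$ from $B$. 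The argument is essentially functoriality of the construction of $\zeta$ under the inclusions $N\subset R\subset G$ together with the compatibility of log Dieudonné theory with the weight filtration recorded in \S\ref{sec:boundary}; once those functorialities are set up carefully the identification is forced, and surjectivity on tangent spaces is a formality.
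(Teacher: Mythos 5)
Your proposal follows essentially the same route as the paper: use that Frobenius preserves the weight filtration together with Proposition \ref{prop:reductionoverP} to reduce the structure group to $\overline{P}_\kappa$ (the paper phrases this as $g_{t,0}\in R_{\FF_p}(A_0)$), factor through the $H''$-zip stack via $\upsilon$, identify the abelian block with $\zeta'$, and deduce tangent-space surjectivity from Lemma \ref{lemma:gammazipsmooth} plus smoothness of $\zeta'$. The "bookkeeping" you defer is exactly what the paper carries out: after a finite \'etale extension trivializing the torus $T$ and the lattice $Y$, the normalized Frobenius is the identity on the toric and \'etale blocks, so $g''_{t,0}$ reduces to the zip element of $B_0$, which is your claimed identification.
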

\begin{proof} The last statement follows from the fact that $\gamma \circ \epsilon \circ \upsilon \circ \zeta'$ is smooth as it is the composite of smooth morphisms by Lemma \ref{lemma:gammazipsmooth}.

As remarked in \S\ref{sec:boundary} the weight filtration on  $\cH_A$ arises from a filtration by Frobenius crystals, that is, Frobenius preserves the weight filtration. We conclude from  Proposition \ref{prop:reductionoverP} that the element $g_{t,0}$  of Lemma \ref{lemma:zetaSigma} lies in $R_{\FF_p}(A_0)$. Let $g_{t,0}''$ be the image of $g_{t,0}$ in the quotient $H''$ of $R_{\FF_p}$. The image of the class of $g_{t,0}''$ via $\nu$ coincides with the class of $g_{t,0}$ as the kernel $U_{\FF_p}$ of the quotient map $R_{\FF_p}\to H''$ is contained in the unipotent radical of $\overline{P}_{\kappa}$ and hence in $E_{\overline{\mathcal{Z}}}$ (as explained in the proof of of Lemma \ref{lemma:gammazipsmooth}).  Let $$\zeta''\colon  {\rm Spec}(A_0)\longrightarrow \bigl[E_{\mathcal{Z}''}\backslash H_k''\bigr] $$be the map defined by $g_{t,0}''\in H''(A_0) $. Then,  the map $\zeta^\Sigma\colon {\rm Spec}(A_0)\to  H-\hbox{{\rm Zip}}^\chi$  factors as $$\zeta^\Sigma= \epsilon \circ \nu \circ \zeta''.$$     Possibly extending the residue field of $A$ we may assume that the torus $T$ and the lattice $Y$ of the degenerating $1$-motive $M_U^\vee$ over $A$ are trivial (see \S\ref{sec:boundary} for the notation). Taking generators of ${\rm Lie }(T)$ coming from a basis of cocharacters of $T$ and a $\ZZ$-basis of $Y$, that defines a basis of the universal extension of $Y$, Frobenius on  $W_0 \cH_A$ becomes multiplication by $p$ and Frobenius on ${\rm Gr}_{2} \cH_A$ is the identity matrix. We conclude that, after a finite \'etale extension of $A$, we can assume that the projection of  $g_{t,0}''$ onto ${\rm Aut}(W_0 \cH_{A_0}) \times {\rm Aut}({\rm Gr}_{2} \cH_{A_0})$ is the identity so that it lies in ${\rm Aut}({\rm Gr}_{2} \cH_{A_0})$, the automorphism group of the Dieudonn\'e module of $B_0$ base changed to ${\rm Spec}(A_0)$. Hence, the morphism  $\zeta''$ factors via $\zeta'$ restricted to ${\rm Spec}(A_0)$ composed with $\gamma$ as wanted. 

\end{proof}

\subsection{Ekedahl-Oort strata at the boundary}

It follows from \cite[Thm. 5.3.1]{MP} that there exists a minimal compactification $S_0^\ast$ and a map $\pi_0\colon S_0^\Sigma \to S_0^\ast$ such that the structure at the boundary is induced by the fibration described in \S\ref{sec:boundary} (here the index $0$ in $S_0^\Sigma$ denotes as usual the mod $p$ special fiber). More precisely, the boundary components of $S_0^\ast$ are a disjoint union of the special fiber of integral models of lower dimensional smooth Shimura varieties of the type $S_0'$, as considered in \S\ref{sec:boundary}, indexed by so called cusp label representatives. For any such component the map $\pi^{-1}(S_0')\to S_0'$   is isomorphic to the map $\partial \widetilde{S}^\Sigma_0 \to S_0$ defined by restricting the morphism  $\widetilde{S}^\Sigma_0 \to S_0$ of \S\ref{sec:boundary} to a closed subscheme $\partial \widetilde{S}^\Sigma_0\subset \widetilde{S}^\Sigma_0$. Then, we have the following description of the Ekedahl-Oort stratification on the boundary of $S_0^\Sigma$:

\begin{corollary}\label{cor:pointnotintheimageofzetaSigma} For every boundary component $S_0'$ of $S_0^\ast$ as above, the map $\zeta^\Sigma$ restricted to $\pi^{-1}(S_0')$ is the composite of 

\begin{itemize}

\item[i.] the projection $\pi^{-1}(S_0') \to S_0'$;

\item[ii.] the map $\zeta'\colon S_0'\to H'-\hbox{{\rm Zip}}^{\chi'}$ defined in \cite{Chao};

\item[iii.] the map  $\tau\colon H'-\hbox{{\rm Zip}}^{\chi'} \to H-\hbox{{\rm Zip}}^{\chi}$ of Lemma \ref{lemma:gammazipsmooth}.

\end{itemize}

In particular, $\zeta^\Sigma$ restricted to $\pi^{-1}(S_0')$ is constant on the geometric fibers of $\pi$ and the induced map on underlying topological spaces $$\vert \zeta^\Sigma \vert\colon \vert \pi^{-1}(S_0') \vert \to \vert  H-\hbox{{\rm Zip}}^{\chi} \vert$$ has open image and does not contain the closed point of $\vert H-\hbox{{\rm Zip}}^{\chi}\vert$.

\end{corollary}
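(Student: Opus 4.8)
The plan is to combine the structural results already in place: Proposition~\ref{prop:zetSigmaA0}, which identifies $\zeta^\Sigma$ on the local ring $A$ at a boundary point with $\tau\circ\zeta'$, together with the global description of the boundary fibration coming from \cite[Thm.~5.3.1]{MP}. First I would recall that by \cite{MP} the minimal compactification $S_0^\ast$ has boundary a disjoint union of special fibers $S_0'$ of integral models of lower-dimensional Hodge-type Shimura varieties, and that locally around any boundary point of $S_0^\Sigma$ the map $\pi_0\colon S_0^\Sigma\to S_0^\ast$ looks like the tower $\partial\widetilde S_0^\Sigma\to S_0'$ of \S\ref{sec:boundary}, i.e.\ a relative torus embedding over an abelian-scheme torsor over $S_0'$. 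The content of Proposition~\ref{prop:zetSigmaA0} is precisely that on each such chart $\zeta^\Sigma$ is pulled back from the composite $S_0'\xrightarrow{\zeta'}H'\text{-Zip}^{\chi'}\xrightarrow{\tau}H\text{-Zip}^{\chi}$; so I would globalize this by covering $\pi^{-1}(S_0')$ by such charts and checking the identifications are compatible on overlaps (this is formal, since both sides are pulled back from $S_0'$).

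From this the three listed assertions follow. Factorization through i.\ and ii.\ and iii.\ is exactly what we have just globalized; that $\zeta^\Sigma$ is constant on the geometric fibers of $\pi$ is immediate from the factorization, since the fibers of $\pi^{-1}(S_0')\to S_0'$ are (geometrically connected, being torus-embedding fibers) collapsed by the projection. For the statement about the topological map, openness of the image is clear: $\tau=\gamma\circ\epsilon\circ\upsilon$ is smooth by Lemma~\ref{lemma:gammazipsmooth} and $\zeta'$ is smooth by \cite[Cor.~3.12]{Chao}, hence $\tau\circ\zeta'$ is smooth, hence open, and $\pi^{-1}(S_0')\to S_0'$ is smooth surjective so also open; composing, $|\zeta^\Sigma|$ restricted to $|\pi^{-1}(S_0')|$ is an open map onto its image.

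The real point is showing the closed point of $|H\text{-Zip}^\chi|$ is not in the image of $|\zeta^\Sigma|_{\pi^{-1}(S_0')}$. Here I would argue as follows. The image of $|\tau\circ\zeta'|$ is contained in the image of $|\tau|=|\gamma\circ\epsilon\circ\upsilon|$. Tracking the zip-stratum labels through the three maps $\gamma$, $\upsilon$, $\epsilon$: the group $H'$ is a \emph{proper} Levi-type subquotient of $H$ (it is $N_{\FF_p}/U_{\FF_p}$ with $U_{\FF_p}\neq 1$ at an actual boundary point), so $\dim H'\text{-Zip}^{\chi'}<\dim H\text{-Zip}^\chi$; by the dimension formula in Theorems~\ref{thmIW}--\ref{thmWJ} every stratum in the image of $\tau$ has dimension at least $\dim E_{\overline{\mathcal Z}}-\dim E_{\mathcal Z}$ away from the generic one, and in any case no stratum of dimension $0$ (the $C_{\rm EO}(\mathbf 1)$ stratum, associated to $w=\mathbf 1$, of dimension $\dim P$ in $H_k$, i.e.\ the closed point of the stack) can arise, because pulling back along the smooth maps $\gamma,\upsilon,\epsilon$ strictly increases dimension of strata — more precisely, smoothness of $\tau$ means $|\tau|$ respects codimension, and the codimension of any point of $|H'\text{-Zip}^{\chi'}|$ is at most $\dim H'\text{-Zip}^{\chi'}$, which is strictly less than $\dim H\text{-Zip}^\chi$, the codimension of the closed point. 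Hence the closed point, having maximal codimension, cannot lie in the open image. This codimension/dimension bookkeeping through the smooth maps of Lemma~\ref{lemma:gammazipsmooth} is the step I expect to require the most care, since one must be sure the relevant fibers are non-empty and of the expected dimension so that ``smooth surjective onto its image'' genuinely forces the image to miss the top-codimensional stratum.
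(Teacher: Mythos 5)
Your proposal is correct and takes essentially the same route as the paper: parts (i)--(iii) are the globalization of Proposition \ref{prop:zetSigmaA0}, and the closed point is excluded exactly as in the paper, by using that $\tau$ is smooth, hence $\vert\tau\vert$ is open (generalizations lift), so any point mapping to the closed point of $\vert H-\hbox{\rm Zip}^{\chi}\vert$ would have codimension at least the maximal chain length there, which is $\dim S_0$, whereas every point of $\vert H'-\hbox{\rm Zip}^{\chi'}\vert$ has codimension at most $\dim S_0'<\dim S_0$. The only remark is that your intermediate heuristic about strata dimensions ``strictly increasing'' under the smooth maps $\gamma,\upsilon,\epsilon$ is neither needed nor literally correct; the chain-length/codimension comparison you settle on (phrased in the paper via maximal chains of length $\dim S_0'$ versus $\dim S_0$) is what carries the argument.
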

\begin{proof} Statements (i)--(iii) follow from Proposition \ref{prop:zetSigmaA0}. The map $\tau$ is smooth and, hence, its image on the underlying topological spaces is open, i.e., it is closed under generalization. If it contained the closed point, it would be surjective and any maximal chain of points of $\vert H-\hbox{{\rm Zip}}^{\chi} \vert$ would be the image of a maximal chain of points of $\vert H'-\hbox{{\rm Zip}}^{\chi'} \vert$. This can not be as any maximal chain of points in $\vert H'-\hbox{{\rm Zip}}^{\chi'} \vert$ has length equal to the dimension of $S_0'$ which is strictly smaller than the dimension of $S_0$ which is equal to the length of a maximal chain of points  of $\vert H-\hbox{{\rm Zip}}^{\chi} \vert$. This implies the last statement. 

\end{proof}

\subsection{A result on zips in mixed chracteristics}

The cocharacter $\widetilde{\mu}$ of $G_{\WW(\kappa)}$, whose reduction is $\chi$, defines two parabolic subgroups $\widetilde{P}=\widetilde{P}_+$ and $\widetilde{P}_-$ of $G_{\WW(\kappa)}$. The reduction of $\widetilde{P}_\pm$ is $P_\pm$. Let $\widetilde{T}\subset \widetilde{B}\subset G_{\ZZ_p}$ be a Borel subgroup and a maximal torus, defined over $\ZZ_p$. By replacing $\widetilde{\mu}$ by a conjugate cocharacter, that might be defined over an extension $\WW(\kappa)\subset \WW(\kappa')$,  we may and will assume that $\widetilde{P}_+$ and  $\widetilde{P}_-$ contain $\widetilde{B}$. There is a common Levi subgroup $\widetilde{L}=\widetilde{P}_+ \cap \widetilde{P}_-$.  Finally we let $\widetilde{Q} \subset G_{\WW(\kappa)}$ be the parabolic subgroup $\widetilde{P}_-^{(p)}$ defined by base change of $\widetilde{P}_-\subset G_{\WW(\kappa)}$ via  Frobenius using that  $G_{\WW(\kappa)}^{(p)}= G_{\WW(\kappa)}$ as $G_{\WW(\kappa)}$ is defined ove $\ZZ_p$. 

\

Suppose we are given a  $\WW(k)$-algebra $D$  endowed with a lift of Frobenius $\varphi$. Let $\Phi\colon G(D) \to G(D)$  be the Frobenius map $g\mapsto g^{(p)}$; using the embedding $G_{\ZZ_p}\subset {\rm GSp}_{\Lambda}\cong {\rm GSp}_{2g}$ we view $g\in G(D)$ as a $(2g)\times (2g)$-matrix with coefficients in $D$ and we let $\Phi(g)=g^{(p)}$ be the matrix where we apply $\varphi$ to the coefficients. It is still an element of $G(D)$ since the equations defining the closed subgroupscheme  $G_{\ZZ_p}\subset {\rm GSp}_{2g,\ZZ_p}$ have coefficients in $\ZZ_p$. 
Define $\widetilde{E}_{\mathcal{Z}_{\rm EO}}(D)$  to be the subgroup of pairs $(g,h)\in G(D)\times G(D)$ with  $g\in \widetilde{P}(D)$, $h\in \widetilde{Q}(D)$ and such that the map $$\overline{\Phi}\colon \bigl(\widetilde{P}(D)/{\rm R}_u \widetilde{P}(D) \bigr)= \widetilde{L}(D) \longrightarrow \bigl(\widetilde{L}\bigr)^{(p)}(D)=\bigl(\widetilde{Q}(D)/ \bigl({\rm R}_u \widetilde{Q}(D)\bigr)\bigr)$$sends the class of $g$ to the class of $h$. It acts on the left on  $G(D)$ via $(a,b) \cdot g= a g b^{-1}$.

\

Recall the notation of \S\ref{sec:boundary}; $A$ is the compelte local ring of $S^\Sigma$ at a closed point $x_0$. Frobenius on $\cH_A$ defines an element $g_t\in G(A)$.  Assume that $\zeta^\Sigma(x_0)$ is the class of $\widetilde{w} \widetilde{z}^{-1}$ for $w\in{^I}W$. See  Lemma \ref{cor:HZipIW}. For every $\WW(k)$-point $x$ of $A$ we let $\cH_x$ be the pull-back of $\cH_A$. It is a $\WW(k)$-module, endowed with a weight filtration, Hodge filtration and a Frobenius linear map $F$. Proceeding as in the discussion before Lemma \ref{lemma:zetaSigma} Frobenius and a splitting of the Hodge filtration produce an element $g_x\in  G\bigl(\WW(k)\bigr)$. Then,

\begin{proposition}\label{prop:LieE} There exists a $\WW(k)$-point $x$ of $A$, that reduces to $x_0$ modulo $p$ and  generically lies in  $S\subset S^\Sigma$, such that the class of $g_x$  in $\widetilde{E}_{\mathcal{Z}_{\rm EO}}(\WW(k)) \backslash G\bigl(\WW(k)\bigr)$ is $\widetilde{w} \widetilde{z}^{-1}$.

\end{proposition}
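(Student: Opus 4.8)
The plan is to realize $x$ as a suitable $\WW(k)$-point of an explicit $p$-adic analytic family of lifts of $x_0$, produced by a $p$-adic implicit function theorem whose nondegeneracy input is the smoothness of $\zeta^\Sigma$. First I would record two preliminaries. (a) Although $g_t\in G(A)$ depends on the auxiliary Frobenius lift on $A$ and on $t\in J_+(A)$, its class in $\widetilde{E}_{\mathcal{Z}_{\rm EO}}(A)\backslash G(A)$ does not, and it is compatible with pullback, so that for a $\WW(k)$-point $x$ of $A$ one has $[g_x]=[x(g_t)]$ in $\widetilde{E}_{\mathcal{Z}_{\rm EO}}(\WW(k))\backslash G(\WW(k))$; this is the mixed characteristic counterpart of the choice-independence underlying Lemma \ref{lemma:zetaSigma}, the group $\widetilde{E}_{\mathcal{Z}_{\rm EO}}$ being designed to absorb exactly the change-of-trivialization and change-of-Frobenius-lift gauge transformations. (b) Fix an isomorphism $A\cong\WW(\kappa)[[t_1,\dots,t_d]]$ and let $x_0'$ be the $\WW(\kappa)$-point $t_i\mapsto0$; since $\zeta^\Sigma(x_0)$ is the class of $\widetilde w\,\widetilde z^{-1}$, $g_{t,0}(x_0)$ lies in the $E_{\mathcal{Z}_{\rm EO}}(k)$-orbit $H_w$ of $\widetilde w\,\widetilde z^{-1}$ (Theorem \ref{thmIW}), and, $\widetilde{E}_{\mathcal{Z}_{\rm EO}}$ being smooth over $\WW(\kappa)$, after acting on $g_t$ by a constant element of $\widetilde{E}_{\mathcal{Z}_{\rm EO}}(\WW(\kappa))$ (i.e.\ adjusting $t$) I may assume $g_t(x_0')\equiv\widetilde w\,\widetilde z^{-1}\pmod p$. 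It then suffices to find a $\WW(k)$-point $x$ of $A$, reducing to $x_0$ and generic in $S$, with $g_t(x)\in\widetilde{E}_{\mathcal{Z}_{\rm EO}}(\WW(k))\cdot\widetilde w\,\widetilde z^{-1}$.

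By Theorem \ref{theorem:zetasigmasmooth} the morphism $\zeta^\Sigma$ is smooth, hence so is $\Spec A_0\to[E_{\mathcal{Z}_{\rm EO}}\backslash H_k]$; consequently the map $g_{t,0}\colon\Spec A_0\to H_k$ of Lemma \ref{lemma:zetaSigma} is transverse at $x_0$ to the smooth locally closed subvariety $H_w$, i.e.\ $\mathrm{im}\,dg_{t,0}(x_0)+T_{\widetilde w\widetilde z^{-1}}H_w=\mathrm{Lie}\,H$ (indeed $C_{\rm EO}(w)\cap\Spec A_0=g_{t,0}^{-1}(H_w)$ and $g_{t,0}^{\ast}N_{H_w/H_k}\xrightarrow{\sim}N_{C_{\rm EO}(w)\cap\Spec A_0/\Spec A_0}$). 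Let $o\colon\widetilde{E}_{\mathcal{Z}_{\rm EO}}\to G_{\WW(\kappa)}$, $(p,q)\mapsto p\,\widetilde w\,\widetilde z^{-1}q^{-1}$, be the orbit map; its reduction mod $p$ is the analogous map for $E_{\mathcal{Z}_{\rm EO}}$, whose differential has image $T_{\widetilde w\widetilde z^{-1}}H_w$. Reducing mod $p$, the $\WW(\kappa)$-submodule $\mathrm{im}\,dg_t(x_0')+\mathrm{im}\,do$ of $\mathrm{Lie}\,G_{\WW(\kappa)}$ therefore surjects onto $\mathrm{Lie}\,H$, so by Nakayama it is all of $\mathrm{Lie}\,G_{\WW(\kappa)}$, with zero cokernel. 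Now consider the $p$-adic analytic map $F(x,e)=o(e)^{-1}g_t(x)$ on $\{x\equiv x_0',\ e\equiv1\pmod p\}$: one has $F(x_0',1)\equiv1\pmod p$ and $dF(x_0',1)$ is surjective with a $\WW(\kappa)$-linear section, so $p$-adic Newton approximation produces a zero $(x,e)$, still $\equiv(x_0',1)\pmod p$. Then $x$ is a local homomorphism $A\to\WW(k)$ reducing to $x_0$ with $g_t(x)=o(e)\in\widetilde{E}_{\mathcal{Z}_{\rm EO}}(\WW(k))\cdot\widetilde w\,\widetilde z^{-1}$, whence $[g_x]=[\widetilde w\,\widetilde z^{-1}]$. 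Moreover the same nondegeneracy shows that the locus $\mathcal L$ of such $x$ is, near $x_0'$, a $p$-adic analytic submanifold of $({\rm Spf}\,A)(\WW(k))$ of dimension $\ge\ell(w)$, on which $[g_x]=[\widetilde w\,\widetilde z^{-1}]$ identically.

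It remains to choose $x\in\mathcal L$ generic in $S$. If $x_0\in S_0$ this is automatic, since then $A=\widehat{\cO}_{S,x_0}$. If $x_0$ is a boundary point, then $w\ne\mathbf1$ by Corollary \ref{cor:pointnotintheimageofzetaSigma}, so $\ell(w)\ge1$ and $\mathcal L$ has positive dimension; and the same corollary shows that $\zeta^\Sigma$ restricted to each boundary stratum $\pi^{-1}(S_0')$ is smooth and factors through the lower-dimensional $S_0'$, forcing $\dim\bigl(C_{\rm EO}(w)\cap\pi^{-1}(S_0')\bigr)=\ell(w)+\dim\pi^{-1}(S_0')-\dim S_0<\ell(w)=\dim C_{\rm EO}(w)$. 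Thus $C_{\rm EO}(w)$ is not contained in the boundary, and, the transversality of $g_{t,0}$ to $H_w$ persisting along the boundary stratum through $x_0$ (again by smoothness of $\zeta^\Sigma|_{\pi^{-1}(S_0')}$), $\mathcal L$ is likewise not contained in the boundary divisor of ${\rm Spf}\,A$; since $\mathcal L$ is an irreducible positive-dimensional $p$-adic analytic germ, a generic $\WW(k)$-point $x$ of it lies over $S$. This $x$ satisfies all the requirements.

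The step I expect to be the main obstacle is this last one: verifying that it is the mixed characteristic ``$w$-locus'' $\mathcal L$, and not merely the mod-$p$ Ekedahl--Oort stratum $C_{\rm EO}(w)$, that escapes the boundary. This uses both the dimension inequality $\dim\pi^{-1}(S_0')<\dim S_0$ and the persistence of the transversality to $H_w$ along the boundary strata, supplied by the smoothness of $\zeta^\Sigma|_{\pi^{-1}(S_0')}$ in Corollary \ref{cor:pointnotintheimageofzetaSigma}. The compatibility $[g_x]=[x(g_t)]$ of the first step is routine but requires careful tracking of the gauge transformations that $\widetilde{E}_{\mathcal{Z}_{\rm EO}}$ is built to absorb, and the $p$-adic Newton step is standard once the differential is surjective with a section.
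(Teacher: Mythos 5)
Your normalization step and the Newton/successive-approximation step are, in substance, the paper's own argument in different packaging: the paper constructs $x$ as a limit of $\WW_{n+1}(k)$-points $y_n$, killing at each stage an obstruction ${\rm Obs}(y_n)\in{\rm Lie}\,H_k$ by means of the surjectivity of $\theta\colon{\rm Lie}(E_{\mathcal{Z}_{\rm EO}})\times T_{x_0}\to{\rm Lie}\,H_k$, which is exactly the transversality you extract from Theorem \ref{theorem:zetasigmasmooth}, followed by Nakayama. Two caveats there are minor but worth noting: your preliminary (a) (independence of $[g_x]$ from the trivialization and the Frobenius lift) is genuinely needed and not completely formal, though it is of the same nature as Lemma \ref{lemma:zetaSigma}; and $\widetilde{E}_{\mathcal{Z}_{\rm EO}}(\WW(k))$ is not the group of $\WW(k)$-points of an algebraic group over $\WW(k)$ (its definition involves the Frobenius of $\WW(k)$), so statements about $\mathcal{L}$ being a ``$p$-adic analytic submanifold of dimension $\ge\ell(w)$'', or an irreducible germ, are asserted rather than available from standard theory; at the level of approximation modulo $p^n$ none of this matters, which is why the paper argues that way.

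The genuine gap is the final step, precisely the one you flag: producing a point of $\mathcal{L}$ whose generic fibre lies in $S$. Your justification is a mod $p$ dimension count ($\dim\bigl(C_{\rm EO}(w)\cap\pi^{-1}(S_0')\bigr)<\ell(w)$) together with ``persistence of transversality along the boundary stratum'', from which you conclude $\mathcal{L}$ is not contained in the boundary divisor. This does not follow: every $\WW(k)$-point of ${\rm Spf}(A)$ reducing to $x_0$ has closed-point reduction $x_0$ itself, so the position of $C_{\rm EO}(w)$ relative to the boundary inside $\Spec(A_0)$ gives no control over whether the particular lifts in $\mathcal{L}$ make the equations of the boundary divisor nonzero; nothing in your construction tracks those coordinates along $\mathcal{L}$, and a positive-dimensional analytic germ can perfectly well lie inside a divisor. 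The paper avoids this issue by a structural observation at the boundary rather than by deformation theory: for any $\WW(k)$-point $y$ of $A$, Proposition \ref{prop:reductionoverP} (plus Frobenius preserving the weight filtration) puts $g_y$ in $R_{\ZZ_p}(\WW(k))$; the unipotent radical $U_{\ZZ_p}\subset{\rm R}_u\widetilde{P}$ is absorbed by $\widetilde{E}_{\mathcal{Z}_{\rm EO}}$, and Frobenius on $W_0$ and on ${\rm Gr}_2^W$ is $p$, resp.\ the identity, so the class of $g_y$ depends only on the image $y'\in S'(\WW(k))$, i.e.\ only on the abelian part $B$ pulled back via $y'$. Since the fibre of the boundary fibration over $y'$ contains $\WW(k)$-points which generically lie in $S$, one may first drop the genericity requirement and then move the point within its fibre. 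If you wish to keep your route, you must either prove a statement of this kind anyway or exhibit directly a point of $\mathcal{L}$ with nonvanishing boundary coordinates; as written, this step is unproven.
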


\begin{proof}   First of all we claim that it suffices to construct $x$ wthout assuming that  it generically lies in  $S$.
For any $\WW(k)$-valued point $y$ of $A$, Proposition \ref{prop:reductionoverP} and the fact that Frobenius on $\cH_y$ preserves the weight filtration imply that $g_y\in R_{\ZZ_p}\bigl(\WW(k)\bigr)$. The unipotent radical $U_{\ZZ_p}$ of $R_{\ZZ_p}$ is contained in the unipotent radical of $\widetilde{P}$ so that the class of $g_y$ dpends only on the class $\overline{g}_y$ in the reductive quotient $G''_{\ZZ_p}\bigl(\WW(k)\bigr)=R_{\ZZ_p}\bigl(\WW(k)\bigr)/U_ {\ZZ_p}\bigl(\WW(k)\bigr)$.
Since Frobenius  on  $W_0 \cH_y$ with respect to a basis coming form a basis of cocharacters of the torus $y^\ast(T)$ can be taken to be multiplication by $p$  and Frobenius on ${\rm Gr}_{2} \cH_y$ with respect to a basis coming form a basis of $y^\ast(Y)$  is the identity matrix, in fact $\overline{g}_y$ can be taken in $G'_{\ZZ_p}\bigl(\WW(k)\bigr)$ and this dpends only on Frobenius defined on the factor ${\rm Gr}_{1} \cH_y$ which is the Frobenius crystal over $\WW(k)$ defined by pulling back the abelian scheme $B$ via $y$. In particular,  if we let $y'\in S'(\WW(k))$ be the image of $y$   in the fibration of \S \ref{sec:boundary} and if we take any $\WW(k)$-valued point $v$ of $A$ in the fiber of $y'$ then the class of $g_y$ and of $g_v$ in $\widetilde{E}_{\mathcal{Z}_{\rm EO}}(\WW(k)) \backslash G\bigl(\WW(k)\bigr)$ are the same.   As we have $v$'s in the fiber over $y'$ that generically lie in $S$, the claim is proven.

\smallskip

We are left to prove the Proposition dropping the requirement that $x$ lies generically in  $S$. We construct inductively $\WW(k)$-points $(y_n)_{n\in \mathbb{N}}$ such that $y_0\equiv x_0$ modulo $p$ and  $y_{n+1}\equiv y_n$ modulo $p^{n+1}$ for every $n$  and $g_{y_n} \equiv \widetilde{w} \widetilde{z}^{-1} $  in $\widetilde{E}_{\mathcal{Z}_{\rm EO}}(\WW_{n+1}(k)) \backslash G\bigl(\WW_{n+1}(k)\bigr)$.  As the maps $\alpha_n\colon \widetilde{E}_{\mathcal{Z}_{\rm EO}}(\WW_{n+1}(k)) \to E_{\mathcal{Z}_{\rm EO}}(\WW_n(k))$ and $\beta_n \colon G(\WW_{n+1}(k))\to G(\WW_n(k))$ are surjective, the limit point $x=\displaystyle{\lim_{n\to \infty}} y_n$ satsfies the requirements. 

Take $y_0$ to be any lift of $x_0$. Assume that $y_n$ is constructed. Then, possibly after multiplying $g_n:=g_{y_n}$ by an element of $\widetilde{E}_{\mathcal{Z}_{\rm EO}}(\WW_{n+1}(k)) $ we can asssume that $g_n  \widetilde{z} \widetilde{w}^{-1}$ lies in the kernel of $\beta_n$ which is isomorphic to ${\rm Lie} H_k$ (recall that $H$ is the reduction of $G_{\ZZ_p}$ modulo $p$). Thus such class defines an obstruction element ${\rm Obs}(y_n)\in {\rm Lie} H_k$

Let  ${\rm Lie} (E_{\mathcal{Z}_{\rm EO}}) $ be the subset of pairs $(a,b)\in  {\rm Lie} (P)\otimes k \oplus {\rm Lie}( Q)\otimes k$ such that  the map $1 \otimes \varphi \colon {\rm Lie} L \otimes k \longrightarrow {\rm Lie} \bigl(L\bigr)^{(p)}\otimes k $ sends the class of $a$  modulo ${\rm Lie} {\rm R}_u P \otimes k$ to  the class of $b$ modulo ${\rm Lie} {\rm R}_u Q \otimes k$. The krnel of $\alpha_n$ is isomorphic to ${\rm Lie} (E_{\mathcal{Z}_{\rm EO}}) $.
Write   ${\rm Lie} (P)=  {\rm Lie} (L) \oplus    {\rm Lie} ({\rm R}_u P)$ and ${\rm Lie} (Q)=  {\rm Lie} (L^{(p)}) \oplus    {\rm Lie} ({\rm R}_uQ)$. For $a\in{\rm Lie} (P)$ write $a=a_0 + a_1$ for the corresponding decomposition. Similarly for  $b\in{\rm Lie} (Q)$ write $b=b_0 + b_1$. Then $(a,b)\in {\rm Lie} (E_{\mathcal{Z}_{\rm EO}}(\WW(k))$ if and only if $\varphi(a_0)=b_0$. As Frobenius on Lie algebars is the zero map, this amounts to require that $b_0=0$,  that is  ${\rm Lie} (E_{\mathcal{Z}_{\rm EO}}(k)$  consist of pairs $(a,b)$ such that $b_0=0$. Consider the map of Lie algebars over $k$: $$\nu\colon {\rm Lie} (E_{\mathcal{Z}_{\rm EO}}) = {\rm Lie} (L) \oplus ( {\rm R}_u P)  \oplus \Lie ( {\rm R}_u Q)\longrightarrow {\rm Lie} G_k, \qquad (m,n,z)\mapsto m+n-z.$$The element  ${\rm Obs}(y_n)$ lies in the image of $\nu$ if and only if after multiplying $g_n:=g_{y_n}$ by an element of $\widetilde{E}_{\mathcal{Z}_{\rm EO}}(\WW_{n+1}(k)) $ we have $g_{y_n} \equiv \widetilde{w} \widetilde{z}^{-1} $  in $\widetilde{E}_{\mathcal{Z}_{\rm EO}}(\WW_{n+1}(k)) \backslash G\bigl(\WW_{n+1}(k)\bigr)$. In this case we can take $y_{n+1}=y_n$.

Recall that by construction we have a lift  ${\rm Spec}(A_0)\to H_k$ of the map $\zeta^\Sigma\colon S_0^\Sigma \to G-\hbox{{\rm Zip}}^{\chi}$. It induces a map from the tangent space $T_{x_0}$ of $S^\Sigma_0$ at $x_0$ to the tangent space $T_{H, x_0}$  of $H_k$ at the image $c_0$ of the closed point of $A_0$, that we identify with ${\rm Lie} H_k$ by translating by $c_0^{-1}$.  The smoothness of $\zeta^\Sigma$ in Theorem \ref{theorem:zetasigmasmooth} implies that the induced map  $$\theta\colon {\rm Lie} (E_{\mathcal{Z}_{\rm EO}}) \times T_{x_0} \to {\rm Lie} H_k $$is surjective.  The possible lifts of $y_n$ to a $\WW_{n+1}(k)$-valued point of $A$ is a principal homogeneous space under $T_{x_0}$ as we assume that our toroidal compactification $S^\Sigma$ is smooth. Let $y_n'$ be a lift corresponding to an element $s\in T_{x_0}$. Then 
${\rm Obs}(y_n')={\rm Obs}(y_n) + \theta((0,s))$. By the surjectivity of $\theta$ we can then find an $s$, and hence a corresponding lift $y_{n+1}=y_n'$, and an element $b\in {\rm Lie} (E_{\mathcal{Z}_{\rm EO}})$ such that ${\rm Obs}(y_n')=\theta((b,0)$. This implies that after multiplying $g_{n+1}:=g_{y_{n+1}}$ by an element in the kernel of $\alpha_n$ we have that $g_{y_{n+1}} \equiv \widetilde{w} \widetilde{z}^{-1} $  in $\widetilde{E}_{\mathcal{Z}_{\rm EO}}(\WW_{n+1}(k)) \backslash G\bigl(\WW_{n+1}(k)\bigr)$. This concludes the proof of the inductive step and of the Proposition.

\end{proof}

\section{On the mod $p$ Hodge-Tate period map}

We describe the map $q_2\colon \cS(p^\infty)\to \mathrm{F}\ell_{G,\mu^{-1},0}$ using certain formal models introduced in \cite{PS}.

\smallskip

Thanks to \cite[Thm. IV.6.7]{FC} there exists a smoooth toroidal compactifiction  $S_{K_g,\QQ_p}^{\Sigma_g}(p^n)$ of the full level $p^n$ Siegel modular scheme $S_{K_g,\QQ_p}(p^n)\to S_{K_g,\QQ_p}$, associated to the finite admissible cone decomposition  $\Sigma_g$ fixed in (\ref{ass}). It defines a ${\rm Sp}_{2g}(\ZZ/p^n\ZZ)$-cover  $$S_{K_g,\QQ_p}^{\Sigma_g}(p^n)\to S_{K_g,\QQ_p}^{\Sigma_g}.$$Let $T$ be the $p$-adic Tate module fo the universal abelian scheme over $S_{\QQ_p}$. One has  Tate tensors $(s_{\rm et,\alpha})_\alpha$ in the tensor algebra of $T$. Over the fiber product $S_{\QQ_p}\times_{S_{K_g,\QQ_p}^{\Sigma_g}} S_{K_g,\QQ_p}^{\Sigma_g}(p^n)$ the \'etale sheaf  $T/p^n T\cong \Lambda/p^n \Lambda$ is trivialized and it extends to a  constant sheaf $T_n\cong \Lambda/p^n\Lambda$ over $S_{\QQ_p}^{\Sigma}\times_{S_{K_g,\QQ_p}^{\Sigma_g}} S_{K_g,\QQ_p}^{\Sigma_g}(p^n)$. Let $S_{\QQ_p}^{\Sigma}(p^n)$ be  the closed subscheme $$S_{\QQ_p}^{\Sigma}(p^n)\subset S_{\QQ_p}^{\Sigma}\times_{S_{K_g,\QQ_p}^{\Sigma_g}} S_{K_g,\QQ_p}^{\Sigma_g}(p^n)$$such that the isomorphism $t_n\colon T_n\cong \Lambda/p^n\Lambda$ sends  the mod $p^n$-reduction of the tensors $(s_{\rm et,\alpha})_\alpha$  in the tensor algebra of $T_n$ to the tensors $(s_\alpha)_\alpha$ of $\Lambda/p^n\Lambda$. 

Let $S_{K_g}^{\Sigma_g}(p^n)$ be the normalization of the toroidal compactification $S_{K_g}^{\Sigma_g}$ of the Siegel modular scheme  in $S_{K_g,\QQ_p}^{\Sigma_g}(p^n)$. Similarly let  $S(p^n)$, resp.~$S^\Sigma(p^n)$ be the normalization of $S$, resp.~$S^\Sigma$ in   $S_{\QQ_p}^{\Sigma}(p^n)$. It is finite over $S$,  resp.~$S^\Sigma$ by construction. By \cite[Prop. 1.5]{PS} we have a map  $$d\log_n\colon T_n^\vee(1) \to \Omega \otimes \bigl(\cO_{S^\Sigma(p^n)}/ p^n \cO_ {S^\Sigma(p^n)}\bigr)$$given as follows. First of all the Cartier dual of the $p^n$-torsion  of the universal semiabelian scheme $G$ over $S_{\QQ_p}^{\Sigma_g}(p^n)$ is a quotient of $T_n^\vee(1)$ through which $d\log_n$ factors. Any such element extends uniquely to an element $\gamma\colon G[p^n] \to \GG_{m,S(p^n)^{\Sigma}}$, defined over the fppf topology of $S^{\Sigma}(p^n)$. The pull-back of the standard invariant differential on $\GG_{m}$ defines an element of the invariant differentials of $G[p^n]$ that coincide with the mod $p^n$-reduction of the invariant differentials of $G$.   Finally we use \cite[Thm. 1.1]{FC} to identify the sheaf of invariant differentials $\Omega_G$ of the universal semiabelian scheme $G$ over $S^\Sigma$, used in \cite{PS}, with the first piece of the  Hodge filtration $\Omega\subset {\rm H}_{\rm dR}^1$ of the canonical extension ${\rm H}_{\rm dR}^1$ described in \S\ref{sec:EO} and used in \cite{GK}. We can now state \cite[Prop. 1.13]{PS} and \cite[\S 2.3.1]{PS}.

Denote by $S^\Sigma_0(p^n)$ and  $S_{K_g,0}^{\Sigma_g}(p^n)$ the mod $p$-reductions of $S^\Sigma(p^n)$ and $S_{K_g}^{\Sigma_g}(p^n)$ respectively.  Set $\cS^\Sigma(p^n)$ and $\cS_{K_g}^{\Sigma_g}(p^n)$ to be the  formal schemes defined by completing  $S^\Sigma(p^n)$ along $S^\Sigma_0(p^n)$, resp.~of $S_{K_g}^{\Sigma_g}(p^n)$ along $S_{K_g,0}^{\Sigma_g}(p^n)$. Set $\tau\colon  \cS^\Sigma(p^n)\to \cS_{K_g}^{\Sigma_g}(p^n)$ be the morphism induced by the map $S^\Sigma(p^n) \to  S_{K_g}^{\Sigma_g}(p^n)$. Let $\zeta_p$  a primitive $p$-th root of unity.

\begin{proposition}\label{prop:HT1} {{\rm [\cite{PS}]}}\enspace Given integers $n\geq \frac{g}{p-1}+1$ and $1\leq m\leq n-1$ there exist:

\begin{itemize}

\item[i.] admissible normal formal schemes $\overline{\cS}^\Sigma(p^n)$ and $ \overline{\cS}_{K_g}^{\Sigma_g}(p^n)$, projective morphisms $\overline{\cS}^\Sigma(p^n)\to \cS^\Sigma(p^n)$ and $ \overline{\cS}_{K_g}^{\Sigma_g}(p^n) \to  \cS_{K_g}^{\Sigma_g}(p^n)$ inducing an isomorphism on the adic generic fiber and a morphism $\overline{\tau}\colon \overline{\cS}^\Sigma(p^n) \to \overline{\cS}_{K_g}^{\Sigma_g}(p^n)$ making the following diagram commute:

$$\begin{matrix} \overline{\cS}^\Sigma(p^n)& \stackrel{\overline{\tau}}{ \longrightarrow} &  \overline{\cS}_{K_g}^{\Sigma_g}(p^n)\cr
\big\downarrow & & \big\downarrow \cr
\cS^\Sigma(p^n) & \stackrel{\tau}{\longrightarrow} & \cS_{K_g}^{\Sigma_g}(p^n); \cr
\end{matrix}$$

\item[ii.]  a $\cO_{\overline{\cS}^\Sigma(p^n)}$-submodule $\Omega^{\rm mod}\subset \Omega$,  locally free  of rank $g$, such $(\zeta_p-1) \Omega \subset  \Omega^{\rm mod} $. There is a similar statement over  $\overline{\cS}_{K_g}^{\Sigma_g}(p^n)$ and the sheaves over $\overline{\cS}^\Sigma(p^n)$ are obtained from those on $\overline{\cS}_{K_g}^{\Sigma_g}(p^n)$ by pull-back via the morphism $ \overline{\cS}^\Sigma(p^n) \to  \overline{\cS}_{K_g}^{\Sigma_g}(p^n)$ in (i);

\item[iii.] a surjective map  $${\rm HT}_{m}\colon \bigl(\Lambda^\vee/p^m \Lambda^\vee\bigr)\otimes \cO_{\overline{\cS}^\Sigma(p^n)}(1)  \to \Omega^{\rm mod}/p^m \Omega^{\rm mod}$$ induced by the map $d\log_n$ and the identification $t_n\colon T_n\cong \Lambda/p^n\Lambda$. There is a similar statement over  $\overline{\cS}_{K_g}^{\Sigma_g}(p^n)$ and the map  ${\rm HT}_{m}$ over $\overline{\cS}^\Sigma(p^n)$ is obtained from the one on $\overline{\cS}_{K_g}^{\Sigma_g}(p^n)$ by base change via the morphism $ \overline{\cS}^\Sigma(p^n) \to  \overline{\cS}_{K_g}^{\Sigma_g}(p^n)$ in (i).

\end{itemize}

The sheaf $\Omega^{\rm mod}$ is independent of $n$ and $m$ and for $1\leq m' \leq m \leq n-1$ the map ${\rm HT}_{m'}$ is the reduction of ${\rm HT}_m$ modulo $p^{m'}$. 
\end{proposition}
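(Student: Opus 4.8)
The statement is, for the Siegel case, a repackaging of \cite[Prop. 1.13]{PS} together with the constructions of \cite[\S 2.3.1]{PS}, so the plan is to recall how those are proved and then to explain the (essentially formal) descent of the Hodge type objects from the Siegel ones along the morphism $\xi^\Sigma$ provided by Assumption \ref{ass}.

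First I would treat the Siegel case. The input is the map $d\log_n\colon T_n^\vee(1)\to\Omega\otimes(\cO/p^n)$ of \cite[Prop. 1.5]{PS} and the trivialization $t_n\colon T_n\cong\Lambda/p^n\Lambda$, which is available precisely over $S_{K_g}^{\Sigma_g}(p^n)$; together they give an $\cO_{\cS_{K_g}^{\Sigma_g}(p^n)}$-linear map $(\Lambda^\vee/p^n\Lambda^\vee)(1)\to\Omega/p^n\Omega$. Its image is not a direct summand in general, so one passes to the \emph{admissible formal blow-up} of $\cS_{K_g}^{\Sigma_g}(p^n)$ along the open ideal generated --- locally with respect to a trivialization of $\Omega$ --- by the coefficients of (a lift of) this map, and normalizes; this is $\overline{\cS}_{K_g}^{\Sigma_g}(p^n)$. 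It is projective over $\cS_{K_g}^{\Sigma_g}(p^n)$ and an isomorphism on adic generic fibres, and on it the image of $d\log_n$ generates a subsheaf which one names $\Omega^{\rm mod}$; the surjectivity of ${\rm HT}_m$ onto $\Omega^{\rm mod}/p^m$ for $m\le n-1$ is then built into the choice of the blow-up.

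The containment $(\zeta_p-1)\Omega\subset\Omega^{\rm mod}\subset\Omega$ is the decisive numerical point, and it is where the hypothesis $n\ge g/(p-1)+1$ is used. It rests on the Hodge-Tate comparison for the $p$-divisible group (at the boundary: the Barsotti--Tate group attached to the universal semiabelian scheme), namely that the cokernel of the Hodge-Tate map is killed by $\zeta_p-1$, together with the fact that the determinant of the $g\times g$ block computing $d\log_n$ has $p$-adic valuation at most $g/(p-1)=g\cdot v_p(\zeta_p-1)$. Asking $n-1\ge g/(p-1)$ ensures this valuation is detected modulo $p^n$, so that $\Omega^{\rm mod}$ and the underlying blow-up stabilize and are compatible under the transition maps of the tower in $n$; the identity ${\rm HT}_{m'}={\rm HT}_m\bmod p^{m'}$ for $m'\le m\le n-1$ then follows from the functoriality of $d\log$ in $n$ and the uniqueness of normalizations. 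At the boundary one uses that $d\log_n$ is attached to the homomorphism $\gamma\colon G[p^n]\to\GG_{m,S^{\Sigma}(p^n)}$ Cartier dual to the universal semiabelian scheme, recalled just before the statement, and the identification of $\Omega_G$ with $\Omega\subset{\rm H}_{\rm dR}^1$ via \cite[Thm. 1.1]{FC}, which carries the interior computation across $S^\Sigma\setminus S$.

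Finally, the Hodge type case is obtained by base change along $\xi^\Sigma$: one pulls back the formal blow-up $\overline{\cS}_{K_g}^{\Sigma_g}(p^n)\to\cS_{K_g}^{\Sigma_g}(p^n)$ along $\tau\colon\cS^\Sigma(p^n)\to\cS_{K_g}^{\Sigma_g}(p^n)$ and normalizes to define $\overline{\cS}^\Sigma(p^n)$ and $\overline{\tau}$; projectivity, the isomorphism on generic fibres and the commutativity of the square are preserved. One then \emph{defines} $\Omega^{\rm mod}$ and ${\rm HT}_m$ over $\overline{\cS}^\Sigma(p^n)$ as the pullbacks of their Siegel counterparts, and identifies them with the intrinsic objects coming from $d\log_n$ over $S^\Sigma(p^n)$ --- hence transports surjectivity, the $\zeta_p-1$ sandwich and the compatibilities --- using the functoriality of $d\log$ under $\xi^\Sigma$ and the fact that $\Omega$ over $S^\Sigma$ is by construction pulled back from $S_{K_g}^{\Sigma_g}$. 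I expect the main obstacle to be the first step in the Siegel case: proving that after the normalized blow-up the image of $d\log_n$ is locally free of rank $g$, and that $\Omega^{\rm mod}$ is genuinely independent of $n$ --- this is the technical heart of \cite{PS}, and it relies on the determinant/valuation bound above together with a delicate analysis along the toroidal boundary.
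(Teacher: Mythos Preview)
Your sketch is faithful to the argument in \cite{PS}, but you should note that the paper itself does not give a proof of this proposition: it is stated with the attribution ``[\cite{PS}]'' and introduced by ``We can now state \cite[Prop.~1.13]{PS} and \cite[\S 2.3.1]{PS}'', with no \texttt{proof} environment following. So there is nothing to compare against in the paper; the result is simply imported. Your outline of the Pilloni--Stroh construction (admissible blow-up along the ideal of coefficients of $d\log_n$, normalization, the $(\zeta_p-1)$-bound from the Hodge--Tate map, and pullback along $\xi^\Sigma$ for the Hodge-type case) is an accurate summary of how \cite{PS} proceeds.
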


Let $\overline{S}_0^\Sigma(p^n)$ be the scheme over $\kappa$  given by the mod $p$ special fiber of $\overline{\cS}^\Sigma(p^n)$. Denote by $\bigl(\overline{S}_0^\Sigma(p^n)\bigr)^{\rm red} $ the reduced scheme associated to $S_0^\Sigma(p^n)$

\

Recall that, using the Hodge cocharacter, we have defined a cocharacter $\chi$ of $(G_{\ZZ_p})_\kappa=H_\kappa$ and hence a decomposition $\Lambda \otimes \kappa=\Lambda_0\otimes \kappa \oplus \Lambda_{-1} \otimes \kappa$.  Taking the sequence $\Lambda_{-1} \otimes \kappa \subset \Lambda \otimes \kappa$ we get a base point in  $\mathrm{F}\ell_{G,\mu^{-1},0}$, i.e., an identification as schemes over $\kappa$: $$\mathrm{F}\ell_{G,\mu^{-1},0}\cong \mathrm{F}\ell_{H,\chi^{-1}} \cong P_-\backslash H_\kappa.$$

\begin{corollary}\label{cor:piHT0} The subsheaf ${\rm Ker}({\rm HT}_1)$ provided by Proposition \ref{prop:HT1} defines a map of schemes over $\kappa$: $$\pi_{\rm HT,0}\colon\bigl(\overline{S}_0^\Sigma(p^n)\bigr)^{\rm red}  \to P_-\backslash H_\kappa.$$

\end{corollary}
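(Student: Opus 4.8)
The plan is to extract $\pi_{\mathrm{HT},0}$ from the sub-bundle $\mathrm{Ker}(\mathrm{HT}_1)$ by the usual ``sub-bundle $=$ point of a Grassmannian'' yoga, and then to show that this particular sub-bundle carries the $G$-structure of type $\mu^{-1}$, so that the resulting morphism lands in the closed subvariety $\mathrm{F\ell}_{G,\mu^{-1},0}$ of the ambient Grassmannian rather than just in the Grassmannian itself.

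First I would carry out two elementary reductions. Put $X:=\bigl(\overline{S}_0^\Sigma(p^n)\bigr)^{\mathrm{red}}$, where $n$ is as in Proposition \ref{prop:HT1} and in addition $n\geq 2$, so that $m=1$ is admitted. Since $p$ equals $(\zeta_p-1)^{p-1}$ up to a unit of $\ZZ_p[\zeta_p]$, the element $\zeta_p-1$ is nilpotent modulo $p$; hence $\zeta_p\equiv 1$ on $X$, so $X$ is a scheme over $\kappa$ and the Tate twist $\mathcal{O}(1)$ restricts to $\mathcal{O}_X$ canonically. Applying Proposition \ref{prop:HT1}(iii) with $m=1$ and restricting to $X$ then gives a surjection of locally free $\mathcal{O}_X$-modules $\mathrm{HT}_1\colon \Lambda^\vee\otimes_\kappa\mathcal{O}_X\twoheadrightarrow \overline{\Omega}$, with $\overline{\Omega}:=\bigl(\Omega^{\mathrm{mod}}/p\bigr)\vert_X$ locally free of rank $g$ by Proposition \ref{prop:HT1}(ii) and source free of rank $2g$. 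Consequently $\mathcal{K}:=\mathrm{Ker}(\mathrm{HT}_1)$ is Zariski-locally on $X$ a direct summand of $\Lambda^\vee\otimes\mathcal{O}_X$; in particular it is a locally free sub-bundle of rank $g$, and it defines a morphism $f\colon X\to \mathrm{Gr}$ to the Grassmannian $\mathrm{Gr}$ over $\kappa$ of rank-$g$ subspaces of $\Lambda^\vee_\kappa$, with its left $\mathrm{GL}(\Lambda^\vee_\kappa)$-action.

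It remains to show that $f$ factors through $\mathrm{F\ell}_{G,\mu^{-1},0}$. Via the base point attached to $\Lambda_{-1}\otimes\kappa\subset\Lambda\otimes\kappa$, and the symplectic form among the tensors $(s_\alpha)$ used to identify $\Lambda_\kappa$ with $\Lambda^\vee_\kappa$ as $H_\kappa$-representations up to the similitude character, $\mathrm{F\ell}_{G,\mu^{-1},0}=P_-\backslash H_\kappa$ is realized as the closed $H_\kappa$-stable subvariety of $\mathrm{Gr}$ given by the $H_\kappa$-orbit of the standard rank-$g$ subspace, whose stabilizer is $P_-$. Because $X$ is reduced and $\mathrm{F\ell}_{G,\mu^{-1},0}\hookrightarrow\mathrm{Gr}$ is a closed immersion, it suffices to check, for each geometric point $\overline{x}\to X$, that $\mathcal{K}_{\overline{x}}\subset\Lambda^\vee_\kappa\otimes\kappa(\overline{x})$ lies in the $H_{\overline{\kappa}}$-orbit of the standard subspace. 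The way I would do this is to invoke the compatibility of $\mathrm{HT}_1$ with the Tate tensors: by construction in \cite{PS}, $\mathrm{HT}_1$ is induced by $d\log_n$, hence by the universal semi-abelian scheme, and over $X$ the \'etale tensors $(s_{\mathrm{et},\alpha})$ on $T_n$ are matched with $(s_\alpha)$ while the de Rham tensors $(s_{\mathrm{dR},\alpha})$ cut the frame bundle of $\mathrm{H}^1_{\mathrm{dR}}$ down to a $G_{\ZZ_p}$-torsor (cf. \S\ref{sec:EO}(iv)); trivializing that torsor at $\overline{x}$ identifies $\mathcal{K}_{\overline{x}}$ with an $H_{\overline{\kappa}}$-translate of the kernel of the \emph{standard} Hodge--Tate quotient, which is of type $\mu^{-1}$ by the construction of $\pi_{\mathrm{HT}}$ in \cite[Thm. 2.1.3]{CS}. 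Alternatively, using that $\overline{\mathcal{S}}^\Sigma(p^n)$ is $\ZZ_p$-flat, one may lift $\overline{x}$ to an $\mathcal{O}_C$-point of $\overline{\mathcal{S}}^\Sigma(p^n)$ for a suitable complete algebraically closed extension $C/\QQ_p$, observe that on the adic generic fibre the $d\log_n$ construction recovers (a model of) the Hodge--Tate period map of \cite{CS}, which lands in $\mathrm{F\ell}_{G,\mu^{-1}}$, and conclude by properness of $\mathrm{F\ell}_{G,\mu^{-1}}$ over $\WW(\kappa)$ that $f(\overline{x})\in\mathrm{F\ell}_{G,\mu^{-1},0}(\overline{\kappa})$.

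The main obstacle is precisely this last point. Once $\mathrm{Ker}(\mathrm{HT}_1)$ is known to be a rank-$g$ sub-bundle, its landing in the \emph{full} Grassmannian is automatic; the entire content of the statement is that it carries the $G$-structure and is of type $\mu^{-1}$, so that $f$ factors through the proper subvariety $\mathrm{F\ell}_{G,\mu^{-1},0}$. This relies on the finer properties of the Pilloni--Stroh models, in particular on the compatibility of $d\log_n$ with the crystalline/Tate tensors recalled before Proposition \ref{prop:HT1}, together with the identification of $d\log_n$ (in the limit, or on the generic fibre) with the Hodge--Tate period map of \cite{CS}; everything else — trivializing the Tate twist on the reduced fibre, forming the kernel of a surjection of vector bundles, and reducing the check to geometric points by reducedness of $X$ — is routine. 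Finally, the independence of $\Omega^{\mathrm{mod}}$ from $n$ and the compatibility of the $\mathrm{HT}_m$ under reduction recorded in Proposition \ref{prop:HT1} show, if desired, that the maps $\pi_{\mathrm{HT},0}$ obtained for different $n$ are compatible with the natural transition morphisms, although this is not needed at fixed $n$.
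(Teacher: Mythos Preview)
Your proposal is correct and follows essentially the same approach as the paper: produce a map to an ambient flag variety from the rank-$g$ sub-bundle $\mathrm{Ker}(\mathrm{HT}_1)$, then use the compatibility of the \cite{PS} construction with the Hodge--Tate period map of \cite{CS} (which lands in $\mathrm{F}\ell_{G,\mu^{-1}}$) together with a closed-immersion argument to conclude that the map factors through $P_-\backslash H_\kappa$. The only cosmetic difference is that the paper takes the Siegel flag variety $P_g^-\backslash\mathrm{GSp}_{2g,\kappa}$ as the ambient target (since Proposition~\ref{prop:HT1}(iii) literally says $\mathrm{HT}_1$ is pulled back from the Siegel case via $\overline{\tau}$), whereas you use the full Grassmannian; both work, and the paper's choice is marginally more direct given the setup. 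Your option (b)---lift a geometric point to $\cO_C$, use that on the generic fibre the map lands in $\mathrm{F}\ell_{G,\mu^{-1}}$, and conclude by the closed immersion into the ambient target---is exactly the mechanism the paper uses, phrased via the perfectoid tower rather than a single $\cO_C$-point. Your option (a), arguing directly via tensor compatibility at geometric points, is less complete as written: knowing that both the \'etale and de~Rham frames carry $G$-structure does not by itself pin down the \emph{type} of $\mathrm{Ker}(\mathrm{HT}_1)$ as $\mu^{-1}$ without invoking \cite{CS} anyway, so (b) is the substantive argument.
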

\begin{proof} Let $P_g^\pm$ be the parabolic of ${\rm GSp}_{2g,\kappa}$ induced by the cocharacter $\chi^\pm$  via the inclusion $G_{\kappa}\subset {\rm GSp}_{2g,\kappa}$. Then ${\rm Ker}({\rm HT}_1)$ provides a map $\overline{S}_0^\Sigma(p^n) \to P_g^-\backslash {\rm GSp}_{2g,\kappa}$. The Hodge-Tate period map $\pi_{\rm HT}$ is functorial with respect to morphims of Shimura data $(G,X)\to \bigl({\rm GSp}_{2g},\mathbb{H}_g\bigr) $ by \cite[Thm. 2.1.2 \& Thm.~2.1.3]{CS}. By  \cite{PS}  the integral Hodge-Tate period map for the Siegel variety $\overline{\cS}^{\Sigma_g}_{K_g}(p^\infty)\to \mathrm{F}\ell_{{\rm GSp}_{2g},\mu^{-1}}$ is compatible with the one constructed by Scholze. Hence the image of $\overline{\cS}^\Sigma(p^\infty)$ is contained in $\mathrm{F}\ell_{G,\mu^{-1}}$. 
By the compatibilities of ${\rm HT}_1 $ over $\overline{\cS}^\Sigma(p^n)$ and $\overline{\cS}^{\Sigma_g}_{K_g}(p^n)$    respectively (see  Proposition \ref{prop:HT1} (iii)) the composite morphism on special fibers $$\overline{S}_0^\Sigma(p^n)\to \overline{S}^{\Sigma_g}_{K_g,0}(p^n) \to \mathrm{F}\ell_{{\rm GSp}_{2g},\mu^{-1},0},$$provided by  ${\rm Ker}({\rm HT}_1)$, factors through the image of the composite
$\mathrm{F}\ell_{G,\mu^{-1},0} \to \mathrm{F}\ell_{{\rm GSp}_{2g},\mu^{-1},0}\cong P_g^-\backslash {\rm GSp}_{2g,\kappa}$. Such image coincides with   $P_-\backslash H_\kappa$ as  $P_-=H_\kappa\cap P_g^-$ and  the map $P_-\backslash H_\kappa \to P_g^-\backslash {\rm GSp}_{2g,\kappa}$ is a closed immersion. The claim follows.

\end{proof}

We need to restate the Corollary using \S\ref{sec:variant} in order to express the fine Deligne-Lusztig stratification on $\mathrm{F}\ell_{H,\chi^{-1},0}$ in terms of the Weyl group $W$.  Let $I_-$ be the type opposite to the type $I$ of $P$. Thanks to Lemma \ref{lemma:EZEZ'} we have an isomorphism as schemes over $k$ $$P_- \backslash H_k \cong  P_{I_-} \backslash H_k, \quad h\mapsto  \widetilde{y}^{-1} h  \widetilde{z}.$$Via the projection map $$P_{I_-} \backslash H_k \to \bigl[ E_{\mathcal{Z}_-}\backslash H_k\bigr]$$we get a stratification on $  P_-\backslash H_k$ indexed by the elements of ${^{I_-}}W$ with closure relation described by the order relation  $\preccurlyeq$; given two elements $w$ and $w'\in  {^{I_-}}W$ we write $w \preccurlyeq w' $ if and only if there exists $x\in W_{I_-}$ such that $x w \varphi(x)^{-1} \leq w'$ (for the  Bruhat order on $W$). We let 

\begin{equation}\label{eq:gammabar} \overline{\gamma}\colon \mathrm{F}\ell_{H_k,\chi^{-1},0}\cong P_- \backslash H_k \cong  P_{I_-} \backslash H_k\longrightarrow  \bigl[ E_{\mathcal{Z}_-}\backslash H_k\bigr]\end{equation}
be the composite map.

For later purposes we prove the following result about the modulo $p$ Hodge-Tate period map.

\begin{proposition}\label{prop:HTonto} The map $\pi_{\rm HT,0}$ is surjective. 

\end{proposition}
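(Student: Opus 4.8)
The plan is to show that the image of $\pi_{\mathrm{HT},0}$ is at once closed and Zariski dense in the irreducible variety $\mathrm{F}\ell_{G,\mu^{-1},0}=P_-\backslash H_\kappa$; a closed subset containing a non-empty open is then everything.

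\emph{Closedness.} First I would observe that $\overline{S}_0^\Sigma(p^n)^{\mathrm{red}}$ is proper over $\kappa$. Indeed $S^\Sigma$ is projective over $\cO_{E,v}$ by hypothesis, $S^\Sigma(p^n)\to S^\Sigma$ is finite, so $\cS^\Sigma(p^n)$ is a projective formal scheme over $\mathrm{Spf}\,\widehat\cO_{E,v}$, and $\overline{\cS}^\Sigma(p^n)\to\cS^\Sigma(p^n)$ is projective by Proposition \ref{prop:HT1}(i); hence $\overline{\cS}^\Sigma(p^n)$ is projective over $\mathrm{Spf}\,\widehat\cO_{E,v}$ and its special fibre, \emph{a fortiori} the reduced subscheme $\overline{S}_0^\Sigma(p^n)^{\mathrm{red}}$, is proper over $\kappa$. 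Therefore $\pi_{\mathrm{HT},0}$ has closed image. (Note also that $\overline{\cS}^\Sigma(p^n)$ and $\cS^\Sigma(p^n)$ are admissible formal schemes with the same $d$-dimensional adic generic fibre, $d:=\dim S_0^\Sigma$, so $\dim\overline{S}_0^\Sigma(p^n)^{\mathrm{red}}=d=\dim\mathrm{F}\ell_{G,\mu^{-1},0}$, which is a useful sanity check.)

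\emph{Density, by an explicit computation at a cusp.} It remains to produce a locus in $\overline{S}_0^\Sigma(p^n)^{\mathrm{red}}$ whose image is dense. I would localise $S^\Sigma$ at a $0$-dimensional rational boundary stratum — a cusp — that is, the deepest stratum of the degeneration structure recalled in \S\ref{sec:boundary}; there the universal degenerating $1$-motive has semiabelian part of maximal toric rank. In the level-$p^n$ tower $S^\Sigma(p^n)$ the Fourier--Jacobi (``period'') coordinates $q_{ij}$ near the cusp acquire $p^n$-th roots $u_{ij}=q_{ij}^{1/p^n}$, and, $S^\Sigma(p^n)$ being the normalisation, these are regular parameters; so their reductions $\bar u_{ij}$ are free local coordinates on an open of $\overline{S}_0^\Sigma(p^n)^{\mathrm{red}}$ (near a cusp we are in the locus of invertible Hasse invariant, so the modification $\overline{\cS}^\Sigma(p^n)\to\cS^\Sigma(p^n)$ is harmless there). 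Evaluating the description of $\mathrm{HT}_1$ through the map $d\log_n$ of \cite{PS}: modulo $p$, in the trivialisation of $\Lambda\otimes\cO$ adapted to the weight filtration of the $1$-motive, $\mathrm{Ker}(\mathrm{HT}_1)$ is the weight-filtration step of $\Lambda\otimes\cO$ sheared by the matrix $(\bar u_{ij})$ of periods. Thus on this chart $\pi_{\mathrm{HT},0}$ is the ``graph of $(\bar u_{ij})$'' map, a locally closed immersion onto an open of the big Bruhat cell of $P_-\backslash H_\kappa$ relative to the parabolic determined by the weight filtration — which, via the identifications of \S\ref{sec:variant}, is the open dense fine Deligne--Lusztig stratum $C_{\mathrm{DL}}({}^{I_-}w_0)$. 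As the $\bar u_{ij}$ sweep out a Zariski dense set of parameters, the image of $\pi_{\mathrm{HT},0}$ is dense in $C_{\mathrm{DL}}({}^{I_-}w_0)$, hence in $\mathrm{F}\ell_{G,\mu^{-1},0}$; combined with closedness this gives surjectivity.

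\emph{Main obstacle.} The computational heart is the identification, modulo $p$ and near the cusp, of $\mathrm{HT}_1$ (equivalently of $d\log_n$ and the modified Hodge bundle $\Omega^{\mathrm{mod}}$ of \cite{PS}) with ``the graph of the period matrix'': one must track precisely how the $p^n$-th roots of the period coordinates enter, how the normalisations of \cite{MP}, \cite{FC} for the compactified semiabelian scheme interact with the modification $\overline{\cS}^\Sigma(p^n)\to\cS^\Sigma(p^n)$, and check that the $\bar u_{ij}$ genuinely span a $d$-dimensional (hence dense) family inside the big cell. A secondary point is that a Hodge-type $(G,X)$ need not admit a $0$-dimensional boundary component — in particular in the compact case — so that the cusp chart is unavailable; there one would instead carry out the same ``graph of periods'' computation on the formal completion of $S^\Sigma(p^n)$ along the basic Newton stratum, which is a level-$p^n$ Rapoport--Zink space, invoking surjectivity of its period morphism.
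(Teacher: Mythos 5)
Your closedness step is fine (proper source, separated target), but it is the trivial half. The density step, which carries all the weight, does not work. It is not merely that the cusp computation is left as an acknowledged ``main obstacle'': the picture it rests on is wrong for the Hodge--Tate period map. At the boundary the toric part of the degenerating $1$-motive contributes $T_p\mu_{p^\infty}$-type classes, and the Hodge--Tate filtration there is (the reduction of) a $\QQ_p$-rational filtration; already for the modular curve, $\pi_{\rm HT}$ contracts the entire multiplicative-reduction tube around a cusp to points of $\PP^1(\QQ_p)$, so mod $p$ a neighbourhood of the boundary lands in the finite set $\PP^1(\FF_p)$, i.e.\ in the closed, zero-dimensional fine Deligne--Lusztig stratum --- not in the big cell. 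The toric ``periods'' $q^{1/p^n}$ do appear in de Rham/Frobenius structures, but the Hodge--Tate map kills them. This is precisely the order-reversal phenomenon of Theorem \ref{thm:main}: the boundary lies in (the closure of) the big Ekedahl--Oort strata (cf.\ Corollary \ref{cor:pointnotintheimageofzetaSigma}), so its image under $q_2$ is confined to closures of low-dimensional DL strata; no boundary chart can have dense image, and the density in fact comes from the opposite (basic) end of the Newton/EO picture. Your fallback for the compact case --- surjectivity of a period morphism for a level-$p^n$ Rapoport--Zink space --- gestures toward the right locus but is a heavy external input, not established in the paper and not related there to the Pilloni--Stroh model $\overline{\cS}^\Sigma(p^n)$; moreover for general Hodge type even a non-compact $S$ need not have a boundary stratum with $d$-dimensional period torus. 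So the proposal has a genuine gap, and its central mechanism is false.

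For comparison, the paper's proof needs no pointwise analysis at all: it algebraizes $\overline{\cS}^\Sigma(p^n)$ together with $\det\Omega^{\rm mod}$, observes that on the special fiber this line bundle is the pull-back via $\pi_{\rm HT,0}$ of the tautological ample sheaf on $P_-\backslash H_\kappa$ while after inverting $p$ it is the Hodge bundle, hence ample on $S(p^n)_{\QQ_p}$; mapping to $\PP^N$ by a power of it and taking the Stein factorization $\overline S_0^\Sigma(p^n)^{\rm red}\to Y\to P_-\backslash H_\kappa$, one finds an irreducible component of $Y$ of dimension $\geq d$ mapping finitely to the irreducible $d$-dimensional flag variety, which forces surjectivity --- uniformly, in particular in the compact case where your argument has nothing to run on.
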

\begin{proof} 
Let $\overline{S}^\Sigma(p^n) \to S^\Sigma(p^n)$ be the projective scheme over $\WW(\kappa)$ and the morphism to $S^\Sigma(p^n)$ algebrizing the morphism of  formal scheme $\overline{\cS}^\Sigma(p^n) \to \cS^\Sigma(p^n)$ of Proposition \ref{prop:HT1}(i). Such morphism is projective and it an isomorphism over $\QQ_p$.  Consider the invertible sheaf $L$ on $\overline{S}^\Sigma(p^n)$ algebrizing the sheaf $\det \Omega^{\rm mod}$ on $\overline{\cS}^\Sigma(p^n)$  and let $L_0$ be the induced invertible sheaf on $\overline{S}_0^\Sigma(p^n)$. The sheaf $L$ is base point free and after inverting $p$ it coincides with the Hodge bundle by Proposition \ref{prop:HT1}(ii). In particular, its restriction to the  $S(p^n)_{\QQ_p}$, identified with an open subscheme of $\overline{S}^\Sigma(p^n) $, is ample. Take a large enough power $L^h$ which is  very ample on $S(p^n)_{\QQ_p}$ . Choosing a $\WW(\kappa)$-basis of global sections ww get a map $$\Psi\colon \overline{S}^\Sigma(p^n) \longrightarrow \mathbb{P}^N_{\WW(\kappa)}.$$As $\overline{S}^\Sigma(p^n)$ is projective over $\WW(\kappa)$, the morphism $\Psi$ is projective and its image $D^\Sigma(p^n)$  is closed.   After inverting $p$, it is an immersion on  $S(p^n)_{\QQ_p}$ so that the image of $\Psi$, after inverting $p$, has dimension equal to the dimension $d$ of $S_{\QQ_p}$.  In particular, $D_0^\Sigma(p^n):=D^\Sigma(p^n) \cap \mathbb{P}^N_{\kappa}$ is equidimensional of dimension $d$. Denote by $\Psi_0$ the morphism induced by $\Psi$ on the mod $p$ special fibers. Consider the Stein factorization 
$$\pi_{\rm HT,0}\colon \overline{S}_0^\Sigma(p^n)^{\rm red} \to Y \to    P_-\backslash H_\kappa $$with $\overline{S}_0^\Sigma(p^n)^{\rm red} \to Y$ surjective with connected fibers and   $ Y  \to    P_-\backslash H_\kappa $ finite. 
 Over $ \overline{S}_0^\Sigma(p^n)^{\rm red}$  the sheaf $L_0$ is the pull-back of the tautological invertible sheaf $L^{\rm taut}$ on $P_-\backslash H_\kappa$ via $\pi_{\rm HT,0}$. Hence the restriction of $\Psi_0$ to $\overline{S}_0^\Sigma(p^n)^{\rm red}$  factors via $Y$. Let $Y'\subset Y$ be an irreducible component dominating an irreducible component of $D_0^\Sigma(p^n)$; it must have dimension $\geq d$. 

The scheme $P_-\backslash H_\kappa$ is irreducible and has dimension equal to the dimension of the unipotent radical of $P$, which is also $d$.  As the map $Y' \to P_-\backslash H_\kappa$ is finite, it must be surjective. Hence the map $\pi_{\rm HT,0}\colon \overline{S}_0^\Sigma(p^n)^{\rm red} \to     P_-\backslash H_\kappa$ is surjective and  the claim follows.

\end{proof}

\section{Proof of Theorem \ref{thm:main}}

Thanks to Corollary \ref{cor:piHT0} the two maps of topological spaces $q_1\colon  S^{\rm ad}(p^\infty) \to S_0^\Sigma$ and $q_2\colon S^{\rm ad}(p^\infty) \to \mathrm{F}\ell_{G,\mu^{-1},0}$ factor through a morphism $S^{\rm ad}(p^\infty)\to \overline{S}_0^\Sigma(p^n)$,  the projective morphism  $u \colon   \overline{S}_0^\Sigma(p^n)  \to S_0^\Sigma$ and the map $\pi_{\rm HT,0}\colon \bigl( \overline{S}_0^\Sigma(p^n)\bigr)^{\rm red} \to \mathrm{F}\ell_{G,\mu^{-1},0}\cong P_-\backslash H_\kappa$. Hence it suffices to prove the result for $\overline{\FF}_p$-points of $ \overline{S}_0^\Sigma(p^n)$. Consider the two maps

$$\zeta_1'\colon  \bigl( \overline{S}_k^\Sigma(p^n)\bigr)^{\rm red} \stackrel{u}{\longrightarrow} S_k ^\Sigma\stackrel{\zeta^\Sigma}{\longrightarrow}H-\hbox{{\rm Zip}}^\chi \cong [E_{\mathcal{Z}_{\rm EO}^\chi} \backslash H_k]$$and
$$\zeta_2'\colon  \bigl( \overline{S}_k^\Sigma(p^n)\bigr)^{\rm red} \stackrel{\pi_{\rm HT,0}}{\longrightarrow} P_-\backslash H_k \stackrel{\overline{\gamma}}{\longrightarrow}  \bigl(E_{\mathcal{Z}_{\rm DL}^{\chi^{-1}}} \backslash H_k\bigr),$$where $\overline{\gamma}$ is defined in (\ref{eq:gammabar}) and the index $k$ in $ \overline{S}_k^\Sigma(p^n)$ denotes the base change to $k$.

\subsection{The setup}\label{sec:setup} Take a closed point $x_0$ of $S_0^\Sigma(p^n)$. Suppose it is defined over a finite extension $\FF$ of $\kappa$. First lift the image $x_0'$ of $x_0$ in $S_0^\Sigma$ to a $\WW(\FF)$-valued point $x'$ of $S^\Sigma$ such that the induced $\WW(\FF)[p^{-1}]$-point lies in $S$: this can be done as $\cS^\Sigma$ is log-smooth over $\WW(\kappa)$.  Then lift $x'$, viewd as a $\WW(\FF)$-point of $\cS^\Sigma$  to a $\cO_K$-valued point $x$ of $\overline{\cS}^\Sigma(p^n)$ for some finite field extension $\WW(\kappa)[1/p] \subset K$. Let $\WW(\kappa)[1/p] \subset K_0 \subset K$ be the maximal unramified extension. We will assume that $\FF$ is the residue field of $K_0$.   This gives, in particular, 

\begin{itemize}

\item[i.]  an abelian variety $A$ over $K_0$, with semistable model $\widetilde{A}$ over $\WW(\FF)$ and special fiber $\widetilde{A}_\FF$ such that the Galois group of $K$ acts trivially on $T_pA/p^n T_pA$;

\item[ii.]  elements $(s_{\rm et,\alpha})_\alpha$ in the tensor algebra of \'etale cohomology of $A$, identified with the dual of the $p$-adic Tate module $T_p(A)$ of $A$, fixed by the Galois group of $K_0$; 

\item[iii.] a trivilization $d\colon T_p(A)^\vee\cong \Lambda$, sending the tensors $(s_{\rm et,\alpha})_\alpha$ to the elements $(s_\alpha)_\alpha$;

\item[iv] a finite and free $\WW(\FF)$-module $H_0:=(x')^\ast(\cH)$, pull back of the log crystal $\cH$ defined in \S\ref{sec:boundary} and identified with the  contravariant log Dieudonn\'e module $H_0(\widetilde{A}_\FF)$ of $\widetilde{A}_\FF$ via \cite[Prop. 1.3.5]{MP}, endowed with a Frobenius linear operator $\varphi$ and  elements $(s_{\rm cris,\alpha})_\alpha$ in the tensor algebra of $H_0$, fixed by $\varphi$;

\item[v] a saturated $\WW(\FF)$-submodule $\omega_0\subset H_0$  such that  $\varphi(\omega_0)\subset p H_0$ and a lattice $\omega^{\rm mod}:=(x')^\ast\bigl(\Omega^{\rm mod}\bigr)\subset \omega_0 \otimes_{\WW(\FF)} \cO_K$, pull-back of the sheaf $\Omega^{\rm mod}$ of Proposition \ref{prop:HT1},  such that the Hodge-Tate map $T_p(A^\vee)\otimes \cO_{\CC_p} \to \omega_0 \otimes_{\WW(\FF)} \cO_{\CC_p} $ factors via a surjective map $${\rm HT}\colon T_p(A^\vee)\otimes \cO_{\CC_p}\longrightarrow  \omega^{\rm mod} \otimes_{\cO_K} \cO_{\CC_p};$$

\item[vi.] a trivilization  $q\colon H_0\cong \Lambda \otimes \WW(\FF)$ such that $\omega_0$ is mapped isomorphically onto $\Lambda_0\otimes_{\WW(\kappa)} \WW(\FF)$ and it sends  the tensors $(s_{\rm cris,\alpha})_\alpha$  to  the elements $(s_\alpha)_\alpha$;

\end{itemize}

Thanks to \cite[Prop. A.2.2]{MP} we have a  comparison morphism $$\alpha_{\rm st}\colon T_p(A^\vee)\otimes_{\ZZ_p}  \widehat{A}_{\rm st} \longrightarrow H_0\otimes_{\WW(\FF)} \widehat{A}_{\rm st} \cdot \frac{1}{t}.$$The ring $\widehat{A}_{\rm st}$ is a variant of  Fontaine period rings introduced in \cite{Breuil}: it is the $p$-adic completion of the DP-$A_{\rm cris}$-algebra $A_{\rm cris}\langle X \rangle$, it is endowed with an action  of the Galois group of $K_0$, with an $A_{\rm cris}$-linear derivation  $N$ and with a Frobenius $\varphi$, it admits a Galois equivariant map $\vartheta\colon \widehat{A}_{\rm st}\to \cO_{\CC_p}$ restricting to the classical map of Fontaine on $A_{\rm cris}$ and sending $X\mapsto 0$. The element $t\in A_{\rm cris}$ is the classical period of Fontaine. The map $\alpha_{\rm st}$ is a map of $ \widehat{A}_{\rm st}$-modules, compatible with all these extra structures.  By \cite[Prop 1.4.10 \& Cor. 3.3.9]{MP} it is  an isomorphism after  inverting $t$  and it sends $(s_{\rm et,\alpha})_\alpha$ to $(s_{\rm cris,\alpha})_\alpha$.

Write $\widehat{B}_{\rm st}:=\widehat{A}_{\rm st}[t^{-1}]$. There is a Galois equivariant map $\widehat{A}_{\rm st}\to B_{\rm dR}^+$ (see \cite[\S7]{Breuil}). Consider the base change  $\alpha_{\rm dR}^+(A)$ of $\alpha_{\rm st}(A)$ to $B_{\rm dR}^+$. By loc.~cit., it coincides with the classical de Rham comparison isomorphism of Fontaine after inverting $t$. In particular, 

\begin{corollary}\label{corollary:alphaHT} The reduction of $\alpha_{\rm st}$ via  $\widehat{A}_{\rm st}\to \cO_{\CC_p}$  is the map $d\log\colon T_pA^\vee\otimes_{\ZZ_p} \cO_{\CC_p} \to \Omega_{\widetilde{A}} \otimes_{\WW(\FF)} \cO_{\CC_p}$ of Proposition \ref{prop:HT1}.  Moreover, $H_0\otimes_{\WW(\FF)} \widehat{A}_{\rm st} $ is contained in the image of $\alpha_{\rm st}$. 
\end{corollary}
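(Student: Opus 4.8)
The plan is to deduce both assertions from the compatibility of $\alpha_{\rm st}$ with the classical de Rham comparison isomorphism, recalled above, together with the fact that the Hodge--Tate weights of an abelian scheme lie in $\{0,1\}$.

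For the first assertion I would use that the Galois-equivariant map $\widehat{A}_{\rm st}\to B_{\rm dR}^+$ of \cite[\S7]{Breuil} is compatible with reduction, that is, $\vartheta$ is the composite of $\widehat{A}_{\rm st}\to B_{\rm dR}^+$ with the projection $B_{\rm dR}^+\twoheadrightarrow\CC_p$ onto the residue field; this is part of the construction in loc.~cit. Hence the reduction of $\alpha_{\rm st}$ via $\vartheta$ is the reduction of $\alpha_{\rm dR}^+(A)$ modulo the maximal ideal of $B_{\rm dR}^+$, and since $\alpha_{\rm dR}^+(A)$ is, after inverting $t$, Fontaine's de Rham comparison morphism, this reduction is the Hodge--Tate comparison morphism. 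After clearing the factor $t$ (equivalently, after the Tate twist implicit in the definition of $d\log_n$) its $\omega_0$-component is the $\cO_{\CC_p}$-integral Hodge--Tate map $T_p(A^\vee)\otimes_{\ZZ_p}\cO_{\CC_p}\to\omega_0\otimes_{\WW(\FF)}\cO_{\CC_p}$; here it is essential that $\vartheta(X)=0$, so that the monodromy operator contributes nothing and the image lands in the Hodge bundle $\Omega_{\widetilde A}\otimes_{\WW(\FF)}\cO_{\CC_p}$ rather than in all of $H_0\otimes_{\WW(\FF)}\cO_{\CC_p}$. It then remains to identify this Hodge--Tate map with $d\log$: over the generic fibre this is the standard description of the Hodge--Tate comparison via Cartier duality and pull-back of the invariant differential on $\GG_m$ (\cite[Prop. 1.5]{PS}), and in the boundary case it follows from the compatibility of the logarithmic crystalline, de Rham and Hodge realizations of the degenerating $1$-motive recorded in \S\ref{sec:boundary} and in \cite[\S1.1.3, App. A]{MP}.

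For the second assertion I would invoke that $\alpha_{\rm st}$ has image inside $\bigl(H_0\otimes_{\WW(\FF)}\widehat{A}_{\rm st}\bigr)\cdot\tfrac1t$ with cokernel annihilated by $t$ --- and not merely by a power of $t$ --- this being the amplitude bound forced by the Hodge filtration on $H_0$ having length one, which is implicit in \cite[Prop. 1.4.10 \& Cor. 3.3.9]{MP}. Since $t$ is a non-zero-divisor in $\widehat{A}_{\rm st}$ and $t\cdot\bigl((H_0\otimes_{\WW(\FF)}\widehat{A}_{\rm st})\cdot\tfrac1t\bigr)=H_0\otimes_{\WW(\FF)}\widehat{A}_{\rm st}$, one concludes $H_0\otimes_{\WW(\FF)}\widehat{A}_{\rm st}\subseteq{\rm Im}(\alpha_{\rm st})$, as claimed. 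The step I expect to be the main obstacle is the bookkeeping in the first assertion: one must track the factor $t$ (equivalently, the Tate twist), verify that reduction modulo $\ker\vartheta$ genuinely annihilates the monodromy term so that the image lies in $\Omega_{\widetilde A}\otimes_{\WW(\FF)}\cO_{\CC_p}$, and then match the resulting map with the explicitly defined $d\log$ of Proposition \ref{prop:HT1}; once this is in place, both statements follow formally from the comparison isomorphism and the amplitude bound.
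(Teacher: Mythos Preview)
Your argument for the second assertion takes a genuinely different route from the paper's. You invoke an amplitude bound: the Hodge filtration on $H_0$ has length one, so the cokernel of $\alpha_{\rm st}$ in $H_0\otimes\widehat{A}_{\rm st}\cdot\tfrac1t$ is killed by $t$, whence $H_0\otimes\widehat{A}_{\rm st}\subset\mathrm{Im}(\alpha_{\rm st})$. The paper instead argues by Poincar\'e duality: it applies the comparison map $\alpha_{\rm st}(A^\vee)$ to the dual abelian variety, takes its $\widehat{A}_{\rm st}$-dual, and uses the identifications $H_0(\widetilde{A}^\vee_\FF)\cong H_0^\vee$ and $(T_pA)^\vee\cdot t\cong T_p(A^\vee)$ to produce an explicit map $\beta_{\rm st}\colon H_0\otimes\widehat{A}_{\rm st}\to T_p(A^\vee)\otimes\widehat{A}_{\rm st}$. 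One then checks $\beta_{\rm st}\circ\alpha_{\rm st}=t$ (equivalently $\alpha_{\rm st}\circ\beta_{\rm st}=t$) by passing to $B_{\rm st}\hookrightarrow B_{\rm dR}$ and using that Fontaine's de Rham comparison is compatible with Poincar\'e duality. Your approach is cleaner conceptually, but note two things: first, the amplitude bound over $\widehat{A}_{\rm st}$ (as opposed to $B_{\rm dR}^+$) is not obviously contained in the references \cite[Prop.~1.4.10 \& Cor.~3.3.9]{MP} you cite, which only assert the isomorphism after inverting $t$; you would need to say more about why $t$ (and not a higher power) suffices integrally. Second, the paper's construction is not gratuitous: the map $\beta_{\rm st}$ reappears as the matrix $\delta$ in the proof of Proposition~\ref{prop:decompostau}, where the identity $\delta\cdot\tau=\tau\cdot\delta=t\,\mathbf{1}_\Lambda$ is used directly. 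So even if your argument closes the Corollary, you would have to reconstruct $\beta_{\rm st}$ separately for that later use.

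For the first assertion your sketch is reasonable and in fact more detailed than what the paper writes; the paper's proof essentially treats it as known and concentrates on the second claim.
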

\begin{proof} Consider the map $\alpha_{\rm st}(A^\vee)$ for the dual abelian variety, $$\alpha_{\rm st}(A^\vee)\colon T_pA \otimes_{\ZZ_p}  \widehat{A}_{\rm st} \longrightarrow H_0(\widetilde{A}_\FF^\vee)\otimes_{\WW(\FF)} \widehat{A}_{\rm st} \cdot \frac{1}{t}.$$
Then $H_0(\widetilde{A}^\vee_\FF)\cong H_0(\widetilde{A}_\FF)^\vee=H_0^\vee$ by \cite[Prop. 1.3.5]{MP} and $(T_pA)^\vee \cdot t  \cong T_p(A^\vee) $. Taking the $\widehat{A}_{\rm st}$-dual of $\alpha_{\rm st}(A^\vee)$ and using these identifications we get a map 

\begin{equation}\label{eq:betast} \beta_{\rm st} \colon H_0\otimes_{\WW(\FF)} \widehat{A}_{\rm st} \longrightarrow T_p(A^\vee)\otimes_{\ZZ_p}  \widehat{A}_{\rm st}.\end{equation}

We claim that $\beta_{\rm st} \circ \alpha_{\rm st} $ is multiplication by $t$. This can be proved after inverting $t$. Notice that by \cite[Prop. A.2.7]{MP} the base change of the maps $\alpha_{\rm st} $ and $\beta_{\rm st} $ to $\widehat{B}_{\rm st}$ are already defined over the subring $B_{\rm st }\subset \widehat{B}_{\rm st}$. Then the fact that $\beta_{\rm st} \circ \alpha_{\rm st} $ is multiplication by $t$ can be checked after base change to  $B_{\rm dR}$, as the induced map   $B_{\rm st } \to B_{\rm dR}$ is injective.  The base change of $\alpha_{\rm st}$ and $\beta_{\rm st}$ to $B_{\rm dR}$ are Fontaine's classical de Rham comparsion isomorphisms. The claim follows then from the fact that the  de Rham comparison isomorphsims are compatible with the Poincar\'e duality in \'etale and de Rham cohomology respectively. 

\end{proof}

\subsection{A description of $\zeta_1'(x_0)$ and $\zeta_2'(x_0)$}

Fix a basis $\mathcal{C}$  of $\Lambda$ as $\ZZ_p$-module. Define $g\in G_{\ZZ_p}(\WW(\FF))$  to be $$g:=q \circ  \frac{\varphi}{p}\vert_{\omega_0} \oplus \varphi\vert_{q^{-1}(\Lambda_{-1}\otimes_{\WW(\kappa)} \WW(\FF))} \circ q^{-1}\colon \Lambda\otimes_{\WW(\kappa)} \WW(\FF)) \longrightarrow \Lambda\otimes_{\WW(\kappa)} \WW(\FF)).$$Here $q$ is the trivialization in \S\ref{sec:setup}(vi). Let $g_0\in H(\FF)$ to be the reduction of $g$ modulo $p$; recall that $H=(G_{\ZZ_p})_{\FF_p}$. As explained in \S\ref{sec:EO}  $$\zeta_1'(x_0)=[g_0]\in E_{\mathcal{Z}_{\rm EO}}(k) \backslash H(k).$$ Assume $\zeta_1'(x_0)\in H_{w \widetilde{z}^{-1}}$ for some $w\in {^I W} $ and  with  $z:=w_0^K$, see Lemma \ref{lemma:z} for the notation. Thanks to Proposition \ref{prop:LieE} there exists a lift $x'$ such that,  possibly after multiplying $g$ by an element $\gamma=(a,b^{(p)})$ of $\widetilde{E}_{\mathcal{Z}_{\rm EO}}(\WW(k))$ with $a\in \widetilde{P}(\WW(k))$ and $b\in \widetilde{P}_-(\WW(k))$, we may assume that  $w\widetilde{z}^{-1}=a g (b^{(p)})^{-1}$.

 We denote by $\mathcal{F}$ the basis $q^{-1}(\mathcal{C})$ of $H_0$.  Write $a=u_+ \ell $ and $b^{(p)}=u_- \ell^{(p)}$ for their decomposition using $\widetilde{P}= \mathrm{R}_u \widetilde{P} \cdot \widetilde{L}$ and $\widetilde{Q}= \mathrm{R}_u \widetilde{Q} \cdot \widetilde{L}^{(p)} $.  We let $\mathcal{A}$  be the basis of $H_0$ such that the matrix of change of basis is ${_{{\mathcal{A}}}} M_{\mathcal{F}}  ({\rm Id})=a^{-1}$. 
Thanks to Lemma \ref{lemma:zetaSigma} we have  $${_\mathcal{A}} M_{\mathcal{A}^{(p)}} (\varphi)= a g \left(\begin{matrix} p & 0 \cr 0 & 1 \cr \end{matrix}\right) \bigl(a^{(p)}\bigr)^{-1} =a g  \bigl(\ell^{(p)}\bigr)^{-1} \left(\begin{matrix} p & 0 \cr 0 & 1 \cr \end{matrix}\right) \bigl(u_+^{(p)}\bigr)^{-1}=w\widetilde{z}^{-1}   u_- \left(\begin{matrix} p & 0 \cr 0 & 1 \cr \end{matrix}\right) \bigl(u_+^{(p)}\bigr)^{-1}  ,$$as $ \left(\begin{matrix} p & 0 \cr 0 & 1 \cr \end{matrix}\right)$ commutes with $\widetilde{L}^{(p)}$, and $w\widetilde{z}^{-1}=a g \bigl(\ell^{(p)}\bigr)^{-1} u_-^{-1}$.
We remark that
$r_-:=u_- \left(\begin{matrix} p & 0 \cr 0 & 1 \cr \end{matrix}\right) \in {^{\widetilde{z}}} \widetilde{B}\bigl(\WW(k)[p^{-1}] \bigr)$ and that $u_+ \in \mathrm{R}_u  \widetilde{P}(\WW(k))$ as $\left(\begin{matrix} p & 0 \cr 0 & 1 \cr \end{matrix}\right)\in \widetilde{T}\bigl(\WW(k)[p^{-1}] \bigr)$ and $u_-\in  \mathrm{R}_u \widetilde{P}_-^{(p)} \bigl(\WW(k) \bigr)\subset  {^{\widetilde{z}}}  \mathrm{R}_u \widetilde{B}\bigl(\WW(k) \bigr)$. In particular,  we have the following formula:

\begin{equation}\label{eq:Frobmatrix}  {_\mathcal{A}} M_{\mathcal{A}^{(p)}} (\varphi)  =w\widetilde{z}^{-1} \cdot  r_- \cdot u_+^{(p)}, \quad r_-\in {^{\widetilde{z}}} \widetilde{B}\bigl(\widehat{A}_{\rm st}\bigl[p^{-1}\bigr]\bigr), \quad  u_+ \in \mathrm{R}_u\widetilde{P}(\widehat{A}_{\rm st}).
\end{equation}

\

On the other hand we have a surjective map $${\rm HT} \circ d^{-1} \colon \Lambda \otimes \cO_{\CC_p}(1)  \longrightarrow   T_p A^\vee \otimes \cO_{\CC_p} \longrightarrow \omega^{\rm mod}\otimes_{\cO_K} \cO_{\CC_p}$$defining the point $\pi_{\rm HT}(x)$, where $d$ is the trivialization in \ref{sec:setup}(iii). Take $h\in G_{\ZZ_p}(\cO_{\CC_p})$ to be an element such that ${\rm Ker}({\rm HT} \circ d^{-1})$ is  the filtration $h (\Lambda_{-1} \otimes_{\WW(\FF)} \cO_{\CC_p}) $. Then $h_0$, which is $h$ modulo $p$, represents  $\pi_{\rm HT,0}(x_0)$ and $$\zeta_2'(x_0)=[h_0 (h_0^{(p)})^{-1}]\in  E_{\mathcal{Z}_{\rm opp}}(k) \backslash H(k).$$We let $v\in {^{I_-} W}$ be such that $\zeta_2'(x_0)$  lies in the $v$-stratum  as defined in Corollary \ref{cor:piHT0} and the  discussion that follows the Corollary, that is, $$v=\widetilde{y}^{-1} h_0 (h_0^{(p)})^{-1} \widetilde{z} \in E_{\mathcal{Z}_-}(k) \backslash H(k) \cong {^{I_-} W}.$$Recall that   $y:={^I}w_0=  w_0^{I_-}$ and, denoting by $\varphi$  the automorphism of  $W$ defined by Frobenius on $G_k$, we have $\varphi(y)=z$. 


Denote by  $\mathcal{E}=d^{-1}(\mathcal{C})$  the basis of $T_p(A)^\vee$ given by pull back via $d$ of the given basis $\mathcal{C}$ of $\Lambda$. Lift $h\in G(\cO_{\CC_p})$ to an element $\widetilde{h}\in G\bigl(\widehat{A}_{\rm st}\bigr)$ via the surjective map $\vartheta\colon \widehat{A}_{\rm st} \to \cO_{\CC_p}$. Then $\widetilde{h} (\widetilde{h}^{(p)})^{-1}\in G\bigl(\widehat{A}_{\rm st}\bigr)$ lifts $h_0 (h_0^{(p)})^{-1}$. Here $\widetilde{h}^{(p)}$ is defined using the Frobenius $\varphi$ on $\widehat{A}_{\rm st}$.  Let $\mathcal{D}$ be the basis of $T_p(A)^\vee \otimes \widehat{A}_{\rm st} $ defined by $\widetilde{h}^{-1} \cdot d$. Then the matrix of change of basis expressing $\mathcal{E}$ as a combination of $\mathcal{D}$ is ${_{{\mathcal{D}}}} M_{\mathcal{E}}  ({\rm Id})=\widetilde{h}$.  Let Frobenius $\varphi$ on $T_p(A) \otimes \widehat{A}_{\rm st} $ be $1\otimes \varphi$. Its matrix with respect to the basis $\mathcal{E}$  is the identity so that 

\begin{equation}\label{eq:Frobeniush} {_{{\mathcal{D}}}} M_{\mathcal{D}{^{(p)}}}  (\varphi)={_{{\mathcal{D}}}} M_{\mathcal{E}}  ({\rm Id}) {_{{\mathcal{E}}}} M_{\mathcal{E}{^{(p)}}}  (\varphi) {_{\mathcal{E}^{(p)}}} M_{\mathcal{D}{^{(p)}}}  ({\rm Id}) = \widetilde{h} (\widetilde{h}^{(p)})^{-1}.\end{equation}

\subsection{Reduction steps}

We let  $\tau\in G(\widehat{B}_{\rm st})$ be the element $\tau={_{\mathcal{A}}} M_{\mathcal{D}}(\alpha_{\rm st})$. It is the matrix of $$((a q) \otimes 1) \circ  \alpha_{\rm st}  \circ   ((\widetilde{h}^{-1} d) \otimes 1)^{-1} \colon \Lambda \otimes \widehat{A}_{\rm st} \longrightarrow T_p(A^\vee)\otimes_{\ZZ_p}  \widehat{A}_{\rm st}  \longrightarrow  H_0\otimes_{\WW(\FF)} \widehat{A}_{\rm st}  \longrightarrow \Lambda \otimes \otimes_{\ZZ_p}  \widehat{A}_{\rm st}$$with respect to the given  basis $\mathcal{C}$ of $\Lambda$. Then,

\begin{proposition}\label{prop:decompostau} Assume that $p>3$. There exists a $\widehat{A}_{\rm st} $-algebra $R^+$ with the following properties:

\begin{itemize}
\item[i.] $R^+$ is $t$-torsion free and local with maximal ideal $\mathfrak{m}$ and residue field $k$ and   the quotient map $j\colon R^+\to k$ is compatible with the map $\widehat{A}_{\rm st} \to k$ given by composing $\vartheta_{\rm st}\colon\widehat{A}_{\rm st}  \to \cO_{\CC_p}$ and the residue field map $\cO_{\CC_p}\to k$. We write $R:=R^+[t^{-1}]$;

\item[ii.] $\tau$ and $\tau^{(p)}$, viewed in $G(R)$, admit decompositions $\tau=\beta \cdot   \ell  $ and  $\tau^{(p)}=\beta' \cdot \ell' $ such that $\beta\in  {^{\widetilde{y}}}\widetilde{B}(R)$ and $\beta'\in {^{\widetilde{z}}}\widetilde{B}(R)$ and  $\ell$ and $\ell'$ are in $G(R^+)$ and their reduction $\overline{\ell}$ and $\overline{\ell}'$ modulo $\mathfrak{m}$ lie in $L(k)$ and satisfy $\overline{\ell}'=\overline{\ell}^{(p)}$.

\item[iii.] we have a decomposition as in (ii) with the same $\overline {\ell}$ of (ii), if we replace $\tau$ and $\tau^{(p)}$ with $s \cdot \tau$, resp.~$(s \cdot \tau)^{(p)}$, for some $s\in \mathrm{R}_u \widetilde{P}(\widehat{A}_{\rm st})$.

\end{itemize}

\end{proposition}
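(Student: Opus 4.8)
The plan is to produce the ring $R^+$ by specializing to the non-analytic point of $\mathrm{Spa}(A_{\rm inf},A_{\rm inf})$ cut out by $(p,\omega)$, and then to read off the two decompositions of $\tau$ and $\tau^{(p)}$ from the position of $\tau$ as the matrix of the comparison isomorphism. Concretely, working over $\widehat A_{\rm st}$, I would take $R^+$ to be the local ring (localization, or henselization) at the maximal ideal $\mathfrak m\ni p,t,X$, whose residue field is $k=\overline{\FF}_p$ because $\widehat A_{\rm st}/\mathfrak m$ is a quotient of $\cO_{\CC_p^\flat}$ and $t$ reduces into the maximal ideal of $\cO_{\CC_p^\flat}$. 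Then $R^+$ is $t$-torsion free, $t$ being a non-zero-divisor of $\widehat A_{\rm st}$, and $R:=R^+[t^{-1}]$ is a $\widehat B_{\rm st}$-algebra, so that $\tau,\tau^{(p)}\in G(\widehat B_{\rm st})$ map into $G(R)$.

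To locate $\tau={}_{\mathcal A}M_{\mathcal D}(\alpha_{\rm st})$ I would combine two inputs. First, Corollary \ref{corollary:alphaHT} gives that $\alpha_{\rm st}$ is injective, $\beta_{\rm st}\circ\alpha_{\rm st}=t$, and $H_0\otimes\widehat A_{\rm st}\subseteq\mathrm{Im}(\alpha_{\rm st})$; hence $\tau^{-1}\in M_{2g}(\widehat A_{\rm st})$, $\det\tau=t^{-g}\cdot(\text{unit})$, and base-changing along the map $\widehat A_{\rm st}\to B_{\rm dR}^+$, the image of $\alpha_{\rm st}$ is, by compatibility with Fontaine's de Rham comparison together with the containment just noted, the lattice $H_0\otimes B_{\rm dR}^+ + t^{-1}\omega_0\otimes B_{\rm dR}^+$. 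Second, $\mathcal A$ is adapted to the Hodge filtration $\omega_0$ — recall $a\in\widetilde P(\WW(k))$, so $\omega_0$ has the same description in $\mathcal A$ as in $\mathcal F$ — while $\mathcal D$ is adapted to the Hodge–Tate filtration $\mathrm{Ker}(\mathrm{HT}\circ d^{-1})=h(\Lambda_{-1}\otimes\cO_{\CC_p})$ via $\widetilde h$. Together with $\tau\in G(\widehat B_{\rm st})$, these place $\tau$ in the double coset $\widetilde P_-(B_{\rm dR}^+)\,\widetilde\mu(t)^{-1}\,\widetilde P(B_{\rm dR}^+)$, i.e. in the Schubert cell for $\widetilde\mu$. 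Passing to the vector bundle attached to $(H_0,\varphi)$ on $\cX_{\CC_p^\flat}$ together with its modification along the Hodge–Tate lattice (in the spirit of \cite{CS}) and specializing this picture at the non-analytic point $V(p,\omega)$ — which is where the hypothesis $p\ge 5$ is used, as this is exactly what makes the relevant open of $\cX_{\CC_p^\flat}$ non-empty — one descends the position of $\tau$ to $R$: conjugating by the chosen lift $\widetilde y$ of ${}^I w_0$, one gets $\tau=\beta\ell$ with $\beta\in{}^{\widetilde y}\widetilde B(R)$ carrying the negative powers of $t$ and $\ell\in G(R^+)$.

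That $\overline\ell\in L(k)$ is the arithmetic input: reducing modulo $\mathfrak m$, so that $p$ and $t$ both vanish, and feeding in the Frobenius formula (\ref{eq:Frobmatrix}) for $\varphi$ on $H_0$ together with the hypotheses $\zeta_1'(x_0)\in H_{w\widetilde z^{-1}}$ and $\zeta_2'(x_0)$ in the $v$-stratum, the residual Frobenius and Hodge–Tate data pin $\overline\ell$ into the standard Levi $L$; the remaining freedom (left multiplication of $\ell$ by ${}^{\widetilde y}\widetilde B(R^+)$) moves $\overline\ell$ only within its coset under ${}^{\widetilde y}\widetilde B(k)\cap L(k)$, a Borel of $L$, which is harmless for the later application. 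Applying the Frobenius $\varphi$ of $\widehat A_{\rm st}$, hence of $R^+$, to $\tau=\beta\ell$ yields $\tau^{(p)}=\beta^{(p)}\ell^{(p)}$; since $\varphi$ stabilizes $\widetilde B$, sends $y$ to $z=\varphi(y)$ and $L$ to $L^{(p)}$, one takes $\beta'=\beta^{(p)}\in{}^{\widetilde z}\widetilde B(R)$, $\ell'=\ell^{(p)}\in G(R^+)$ and $\overline{\ell'}=\overline\ell^{(p)}$, which is (ii). For (iii), left multiplication by $s\in\mathrm{R}_u\widetilde P(\widehat A_{\rm st})\subseteq G(R^+)$: as $\mathrm{R}_u\widetilde P$ normalizes the relevant unipotent subgroups and acts trivially on the Levi quotient, one rewrites $s\beta\ell=\beta''\ell''$ with the same $\overline\ell$.

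The hard part will be the second step: reconciling the Hodge-theoretic position of $\tau$ over $B_{\rm dR}^+$ with the crystalline/Frobenius position of its reduction modulo $p$ at the single non-analytic point where both $p$ and $t$ degenerate. Since $\widehat A_{\rm st}$ itself carries no such point with a field as residue, one is forced to introduce $R^+$, and the non-emptiness of the relevant open of $\cX_{\CC_p^\flat}$ — equivalently, that the decomposition of $\tau$ really does propagate to this point — is precisely the content of the assumption $p\ge 5$. Once the position of $\tau$ is pinned down, the remaining bookkeeping (Gaussian elimination over the local ring $R^+$, Frobenius-equivariance, the behaviour under $\mathrm{R}_u\widetilde P$) is routine.
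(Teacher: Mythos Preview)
Your high-level picture is right: the decomposition of $\tau$ reflects its position in a Schubert-type cell, Frobenius transports the decomposition to $\tau^{(p)}$, and the constraint $p\geq 5$ is tied to the geometry near the non-analytic point of ${\rm Spa}(A_{\rm inf},A_{\rm inf})$. But the proposal has a genuine gap precisely at the step you call ``routine Gaussian elimination'': you have not actually produced the decomposition over any ring, and your candidate $R^+$ is too small.

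Concretely, the paper does \emph{not} take $R^+$ to be a localization of $\widehat A_{\rm st}$. It introduces auxiliary period rings $A_{r,s,\gamma}^{\rm st}$ (the $p$-adic completion of $A_{\rm inf}[p/[\underline p]^r,[\underline p]^s/p,X/(p\gamma)]$) and works there. The argument runs: (a) modulo ${\rm Ker}(\vartheta_{\rm st})$ the rescaled matrix $\tau'=\widetilde\mu(\omega)^{-1}\tau$ lies in $P_-(\CC_p)$ with entries in $\cO_{\CC_p}$; (b) an explicit approximation lemma over $A_{\rm inf}$ produces $\tau_0\in\widetilde L(A_{\rm inf})$ and $\tau_1\in{}^{\widetilde y}\widetilde B(A_{\rm inf}[[\underline p]^{-1}])$ with $\tau'\equiv\tau_1\tau_0$ mod ${\rm Ker}(\vartheta)$; (c) one shows that the discrepancy $\tau''=(\tau_1\tau_0)^{-1}\tau'$ becomes a unit of $G$ once one passes to $A_{r,s,[\underline\pi]^r\omega}^{\rm st}$ for suitable $r,s$, because the error lies in $[\underline\pi]^{r-1/(p-1)}$ times integral matrices. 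None of (b) or (c) follows from the $B_{\rm dR}^+$-position of $\tau$; the Schubert description over $B_{\rm dR}^+$ does not descend, since $B_{\rm dR}^+$ does not map to any ring with residue field $k$. The fact that $\overline\ell\in L(k)$ is not deduced from Frobenius or Hodge--Tate data: it holds because $\overline\ell=\overline{\tau_0}$ and $\tau_0$ was \emph{chosen} in $\widetilde L(A_{\rm inf})$ from the start.

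The Frobenius step is also more delicate than you suggest. Frobenius is not an endomorphism of $R^+$: rather $\varphi$ sends $A_{r,s,[\underline\pi]^r\omega}^{\rm st}$ to $A_{pr,s,[\underline\pi]^{pr}\varphi(\omega)}^{\rm st}$, and one needs both the source and target to map into a common ring (eventually $A_{pr,s,[\underline\pi]^{pr}p}^{\rm st}$, whose localization is $R^+$). For this target to exist one needs $pr<s=p-1-r$, i.e.\ $(p+1)r<p-1$, together with $r>\tfrac{1}{p-1}$ from step (c). These two inequalities admit a solution in $r$ exactly when $p\geq 5$: that is the precise content of the hypothesis, not an abstract non-emptiness statement on $\cX_{\CC_p^\flat}$. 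Your sketch invokes $p\geq 5$ at the wrong place and for the wrong reason.
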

\begin{proof}
We write $\tau$ as $$\tau=\left(\begin{matrix}  a & b \cr c & d \cr \end{matrix} \right)\in {\rm End}( \Lambda) \otimes \widehat{A}_{\rm st}$$ with $a\in {\rm End}( \Lambda_0) \otimes_{\WW(\kappa)} \widehat{A}_{\rm st}$, $b\in {\rm Hom} ( \Lambda_1, \Lambda_0)\otimes_{\WW(\kappa)} \widehat{A}_{\rm st}$ etc.

Due to Corollary \ref{corollary:alphaHT} we have that $b$, $c$, $d$ are $0$ modulo ${\rm Ker}(\vartheta_{\rm st})$ and if we write  $\delta=\left(\begin{matrix}  m & n \cr u & v \cr \end{matrix} \right)\in {\rm End}( \Lambda \otimes \widehat{A}_{\rm st})$ 
for the matrix ${_{\mathcal{D}}} M_{\mathcal{A}}(\beta_{\rm st})$ of the morphism $\beta_{\rm st}$ of (\ref{eq:betast}) we have $$\delta \cdot \tau= \tau \cdot \delta= t \mathbf{1}_{\Lambda}.$$Then also $m$, $n$ and $u$ lie in ${\rm Ker}(\vartheta_{\rm st})$.

Consider the ring $A_{0,p-1,\omega}^{\rm st}$ defined in \S\ref{sec:Appendix}. It is a  $\widehat{A}_{\rm st}$-algebra, the map $\vartheta_{\rm st}$ extends to a map $\vartheta_{0,p-1,\omega}\colon A_{0,p-1,\omega}\to \cO_{\CC_p}$ and  ${\rm Ker}(\vartheta_{\rm st}) \subset  \omega A_{0,p-1,\omega}^{\rm st}$. The element $t\in A_{\rm cris}$ is equal to $[\varepsilon]-1$ times a unit of $A_{\rm cris}$ (see for example \cite[Lemme 6.2.13]{olivier}). As $[\varepsilon]-1=\bigl([\varepsilon^{\frac{1}{p}}]-1\bigr) \omega$ times a unit of  $A_{\rm inf}$ we conclude that $t=\bigl([\varepsilon^{\frac{1}{p}}]-1\bigr) \omega g$ with $g$ a unit of $A_{\rm cris}$. 
Hence

$$\tau':= \widetilde{\mu}(\omega)^{-1}  \tau  =\left(\begin{matrix}  a & b   \cr c \omega^{-1}  & d \omega^{-1}\cr \end{matrix} \right) ,\quad \delta':= \delta \cdot \bigl( \widetilde{\mu}(\omega) \omega^{-1}\bigr) =\left(\begin{matrix}  m\omega^{-1} & n \omega^{-1} \cr u & v \cr \end{matrix} \right)$$ are both elements of ${\rm End}( \Lambda) \otimes A_{0,p-1,\omega}^{\rm st}$ and $$ \delta' \cdot \tau' \equiv [\underline{\pi}^{\frac{1}{p-1}}] \mathbf{1}_{2g}$$modulo ${\rm Ker}(\vartheta^{\rm st}_{0,p-1,\omega}) $.

As $\vartheta_{0,p-1,\omega}(b)=\vartheta_{0,p-1,\omega}(n)=0$,  the reduction $\overline{\tau}'$ of $\tau'$ modulo ${\rm Ker}(\vartheta_{0,p-1,\omega})$ lies in $P_-(\CC_p)$. Thanks to Lemma \ref{lemma:approximate} we can find $\tau_0\in \widetilde{L}(A_{\rm inf})$ and $\tau_1\in  {^{\widetilde{y}}}\widetilde{B}\bigl(A_{\rm inf}\bigl[[\underline{p}]^{-1}\bigr]\bigr) \cap {\rm End}(\Lambda) \otimes A_{\rm inf}$ such that $\tau' \equiv \tau_1 \tau_0$ in $ {\rm End}(\Lambda) \otimes \cO_{\CC_p}$ (via the map $\vartheta^{\rm st}_{0,p-1,\omega}$). We define $\delta_0$ and $\delta_1$ simillarly.  Then $$ (\delta_1 \delta_0) \cdot (\tau_1\tau_0) \equiv  [\underline{\pi}^{\frac{1}{p-1}}] \mathbf{1}_{2g} $$ modulo ${\rm Ker}(\vartheta) $. We now pass to $A_{r,p-1,\omega}^{\rm st}$, for $1 > r> \frac{1}{p-1}$. It is a $A_{0,p-1,\omega}^{\rm st}$-algebra and $\vartheta_{0,p-1,\omega}$ extends to the morphism $\vartheta_{r,p-1,\omega}$ by Lemma \ref{lemma:Astrs}. Corollary \ref{corollary:invertiblematrix} implies that 

\begin{equation}\label{eq:tau'}  (\delta_1 \delta_0) \cdot (\tau_1 \tau_0)  =  [\underline{\pi}^{\frac{1}{p-1}}] D \end{equation} with $D\in G(A_{r,p-1,\omega}^{\rm st})$.

Let us now work with $A_{r,s,[\underline{\pi}]^r\omega}^{\rm st}$ with $s = p-1- r$. Using Lemma \ref{lemma:Astrs} again, we know that it is a $A_{r,p-1,\omega}^{\rm st}$-algebra and $\vartheta_{r,p-1,\omega}$ extends to the morphism $\vartheta_{r,s,[\underline{\pi}]^r\omega}$. Moreover,  ${\rm Ker}(\vartheta_{0,p-1,\omega}) \subset  [\underline{\pi}]^r A_{r,s,\omega}^{\rm st}$ by Lemma \ref{lemma:Astrs}. Hence, $\tau' \equiv \tau_1 \tau_0$ modulo $ [\underline{\pi}]^r A_{r,s,[\underline{\pi}]^r\omega}^{\rm st}$. Consider the matrix $$\tau'':= (\tau_1\tau_0)^{-1} \tau'= \left(\begin{matrix}  a' & b' \cr c' & d' \cr \end{matrix} \right) \in G_{\ZZ_p}\bigl(A_{r,s,[\underline{\pi}]^r\omega}^{\rm st} [\underline{\pi}]^{-1}\bigr)\cap {\rm End}(\Lambda)\otimes A_{r,p-1,\omega}^{\rm st}\bigl[ [\underline{\pi}]^{-1}\bigr] .$$Due to (\ref{eq:tau'}) we have that $(\tau_1\tau_0)^{-1}$ is in $ {\rm End}(\Lambda)\otimes \left( [\underline{\pi}^{\frac{-1}{p-1}}] \cdot A_{r,s,[\underline{\pi}]^r\omega}^{\rm st}\right)$ so that  $[\underline{\pi}^{\frac{1}{p-1}}] \left(\begin{matrix}  a' & b' \cr c' & d' \cr \end{matrix} \right)$ is in ${\rm End}(\Lambda)\otimes A_{r,s,[\underline{\pi}]^r\omega}^{\rm st}$ and is congruent to $ [\underline{\pi}^{\frac{1}{p-1}}] \mathbf{1}_{2g}$ modulo $[\underline{\pi}]^r A_{r,s,[\underline{\pi}]^r\omega}^{\rm st}$.  Hence $\tau''$ is a matrix congruent to $\mathbf{1}_{2g}$ modulo  $ [\underline{\pi}]^{r-\frac{1}{p-1}}$ and, hence, it is invertible as $A_{r,s,[\underline{\pi}]^r\omega}^{\rm st}$ is $[\underline{\pi}]$-adically complete and separated by Lemma \ref{lemma:Astrs}, that is $\tau''\in G\bigl(A_{r,s,[\underline{\pi}]^r\omega}^{\rm st}\bigr)$.

In conclusion, we have $\tau=\beta \ell$ with $\beta:=\widetilde{\mu}(\omega)  \tau_1 \in   {^{\widetilde{y}}} \widetilde{B}_-\bigl(A_{r,s,[\underline{\pi}]^r\omega}^{\rm st}\bigl[[\underline{p}]^{-1} \cdot \omega^{-1}\bigr]\bigr)$ and $\ell:=\tau_0 \cdot \tau'' \in G_{\ZZ_p}\bigl(A_{r,s,[\underline{\pi}]^r\omega}^{\rm st}\bigr)$ such that $\overline{\ell}\equiv \overline{\tau}_0 \in L(k)$; here $\overline{\ell}$, resp.~$\overline{\tau}_0$, is the image of $\ell$, resp.~$\tau_0$ in $G_{\ZZ_p}(k)=H(k)$ via the map $j_{r,s}\colon A_{r,s,[\underline{\pi}]^r\omega}^{\rm st}\to k$ defined by $\vartheta_{r,s,[\underline{\pi}]^r\omega}$ composed with the quotient $\cO_{\CC_p}\to k$.

Frobenius $\varphi$ on $\widehat{A}_{\rm st}$ extends to a map  $\varphi\colon A_{r,s,[\underline{\pi}]^{r}\omega}^{\rm st} \longrightarrow  A_{pr,s,[\underline{\pi}]^{pr}\varphi(\omega)}^{\rm st}$ (see lemma \ref{lemma:Astrs}). The latter is well defined, together with the map $j_{pr,s}\colon A_{pr,s,[\underline{\pi}]^{pr}\varphi(\omega)}^{\rm st} \to k$, if and only if  $pr < s=(p-1) - r$, i.e., if there exists $r > \frac{1}{p-1}$ such that $(p+1) r < p-1$ which is equivalent to require that $p>3$ and explains the assumption. Then $$\tau^{(p)}=\beta' \ell' ,\qquad \beta'=\varphi(\beta)  \in {^{\widetilde{z}}}\widetilde{B}\bigl(A_{pr,s,[\underline{\pi}]^{pr}\varphi(\omega)}^{\rm st}\bigl[[\underline{p}]^{-1} \cdot \varphi(\omega)^{-1}\bigr]\bigr) , \quad \ell'=\varphi(\ell) \in G_{\ZZ_p}\bigl(A_{r,s,[\underline{\pi}]^r\omega}^{\rm st}\bigr).$$By construction $\overline{\ell}'=\overline{\ell}^{(p)}\in L^{(p)}(k)$. Here $\varphi(\beta) $ and $\varphi(\ell)$ are defined by applying $\varphi$ to the entries of  the matrices $\beta$ and $\ell$. They lie in  $ {^{\widetilde{z}}}\widetilde{B}$ as $\varphi(y)=z$ and in $G_{\ZZ_p}$ respectively as  they are both subgroups of ${\rm GSp}(\Lambda)$ defined over $\ZZ_p$.

Finally $\omega= [\underline{\pi}] \bigl( [\underline{\pi}]^{pr-1} \frac{p}{[\underline{\pi}]^{pr}} -1)$ is equal to $ [\underline{\pi}]$ times a unit in  $ A_{pr,s,[\underline{\pi}]^{pr}\varphi(\omega)}^{\rm st}$. Analogously $\varphi(\omega)=p \bigl(1 - \frac{ [\underline{\pi}]^s}{p}  [\underline{\pi}]^{p-s}\bigr)$ is $p$ times a unit in  $A_{r,s,[\underline{\pi}]^{r}\omega}^{\rm st}$. Hence both $ A_{r,s,[\underline{\pi}]^{r}\omega}^{\rm st}$ and $ A_{pr,s,[\underline{\pi}]^{pr}\varphi(\omega)}^{\rm st}$ 
map  to $ A_{pr,s,[\underline{\pi}]^{pr}p}^{\rm st} $. We let $R^+$ be the localization of  $A_{pr,s,[\underline{\pi}]^{pr}p}^{\rm st}$ with respect to the maximal ideal defined by the kernel of the map $j_{pr,s}\colon A_{pr,s,[\underline{\pi}]^{pr}p}^{\rm st} \to k$. As $t=\bigl([\varepsilon^{\frac{1}{p}}]-1\bigr) \omega u$ with $u$ a unit in $R^+$ and $\omega= [\underline{\pi}]$ times a unit in $R^+$, inverting $t$ or inverting $  [\underline{\pi}]$ in $R^+$ gives the same ring $R$. As $p= [\underline{\pi}]^{p-1}=p \cdot \frac{[\underline{\pi}]^{p-1}}{p}$ then $p$ is invertible in $R$ as well.  As $R^+$ is $p$-torsion free and $p\in [\underline{\pi}]^{pr} A_{pr,s,[\underline{\pi}]^{pr}p}^{\rm st} $, then $R^+$ is also $[\underline{\pi}]$-torsion free.

For (iii) the same argument for $s \cdot \tau$ in place of $\tau$ holds. We only need to prove that one obtains a decomposition with the  same $\overline{\ell}$.  Define $$\tau''':=\widetilde{\mu}(\omega)^{-1} s\cdot \tau =\bigl(\widetilde{\mu}(\omega)^{-1}  s \widetilde{\mu}(\omega) \bigr) \bigl( \widetilde{\mu}(\omega)^{-1} \tau\bigr).$$Then $s'=\widetilde{\mu}(\omega)^{-1}  s \widetilde{\mu}(\omega) $ is congruent to $\mathbf{1}_{2g}$ modulo $\omega$ as $s\in  \mathrm{R}_u \widetilde{P} (\widehat{A}_{\rm st})$. Hence,  $\tau''' \equiv \tau_1 \tau_0$ in $ {\rm End}(\Lambda) \otimes \cO_{\CC_p}$. Then the argument continues as before and we get a decomposition $s \cdot \tau=\beta'' \ell''$ with  $\beta''\in {^{\widetilde{y}}}\widetilde{B}(R)$ and $\ell''\in G(R^+)$ and $\ell''\equiv \overline{\tau}_0\equiv \ell_0$ modulo $\mathfrak{m}$ as wanted. 

\end{proof}

\begin{remark} If one is interested in a decomposition of $\tau$ as in Proposition \ref{prop:decompostau}, one can work with $B_{\rm dR}$ and the proof is much easier. It is the need to decompose $\tau^{(p)}$ as well that forced us to look into the finer structure of $\widehat{A}_{\rm st}$. As we need the existence of a positive rational number $r$ such that $r > \frac{1}{p-1}$ and $(p+1) r < p-1$ the hypothesis $p\geq 5$ is forced on us.
\end{remark}

\subsection{Conclusion}

Recall that $\tau={_{\mathcal{A}}} M_{\mathcal{D}}(\alpha_{\rm st})$. It follows from (\ref{eq:Frobmatrix}) and (\ref{eq:Frobeniush}) that we have the following equality in $G(R)$:  $$\tau  \widetilde{h}(\widetilde{h}^{(p)})^{-1} \bigl((u_+ \cdot \tau)^{(p)}\bigr)^{-1}= w  \widetilde{z}^{-1} r_-.$$Decompose $\tau=\beta \ell$ and $u_+ \cdot \tau=\rho q $ as in Proposition \ref{prop:decompostau} with $\ell$ and $q\in G_{\ZZ_p}(R^+)$ such that  $\overline{\ell}=\overline{q}=:\ell_0\in L(k)$ modulo $\mathfrak{m}$ and $\beta$ and $\rho\in {^y}\widetilde{B}(R)$.  Consider the class 

$$\gamma:=\ell \widetilde{h}(\widetilde{h}^{(p)})^{-1} \bigl(q^{(p)} \bigr)^{-1} \in {^{\widetilde{y}}}\widetilde{B}(R^+)\backslash G(R^+) /{^{\widetilde{z}}}\widetilde{B}(R^+).$$Its image $\gamma_0$ via  $j\colon R^+\to k$ is $\gamma_0=\ell_0 h_0(h_0^{(p)})^{-1} (\ell_0^{(p)})^{-1}  \in {^{\widetilde{y}}}B(k)\backslash G(k) /{^{\widetilde{z}}}B(k)$. We also have $$\gamma\equiv \beta \gamma \bigl(\rho^{(p)}\bigr)^{-1} = \tau  \widetilde{h}(\widetilde{h}^{(p)})^{-1} \bigl((u_+ \cdot \tau)^{(p)}\bigr)^{-1}= w  \widetilde{z}^{-1} r_- = w  \widetilde{z}^{-1}$$in ${^{\widetilde{y}}}\widetilde{B}(R)\backslash G(R) / {^{\widetilde{z}}}\widetilde{B}(R)$ as $r_-\in {^{\widetilde{z}}} \widetilde{B}(R)$ by (\ref{eq:Frobmatrix}). 
In conclusion, $$\gamma_0=\ell_0 h_0(h_0^{(p)})^{-1} (\ell_0^{(p)})^{-1} \leq_-  w  \widetilde{z}^{-1} $$for the Bruhat order $\leq_-$ on  $ G /{^{\widetilde{z}}}\widetilde{B}$ (with respect to the left action of ${^{\widetilde{y}}}\widetilde{B}$).

Recall that we have chosen $h_0$ so that $\widetilde{y}^{-1}  h_0(h_0^{(p)})^{-1} \widetilde{z}=v \in {^{I_-}}W$. It  is the same class as  $\widetilde{y}^{-1} \gamma_0  \widetilde{z}$ for the order relation $\preccurlyeq$ on ${^{I_-}}W$. Consider the  isomorphism $$\delta\colon  G /{^{\widetilde{z}}}\widetilde{B} \to  G /\widetilde{B}, [g] \mapsto \bigl[\widetilde{y}^{-1} g \widetilde{z}\bigr];$$it is equivariant with respect to the left action of ${^{\widetilde{y}}}\widetilde{B}$ on the left hand side, resp.~$\widetilde{B}$ on the right hand side. In particular, it is order preserving  considering on  $G /\widetilde{B}$ the standard Bruhat order. 
Hence,   $$\delta\bigl(\ell_0 h_0 h_0^{(p)}(\ell_0^{(p)})^{-1}\bigr) \leq \widetilde{y}^{-1} w  $$for the standard Bruhat order. Recall that the class of $\widetilde{y}$ in the Weyl group $W$ is ${^I w}_0$ so that the class of $\widetilde{y}^{-1}  $ is $w_0^I$. Thanks to Lemma \ref{lemma:dualityonWJ}, the map $$\sigma_0\colon {^I W}  \longrightarrow {^{I_-}} W, \qquad w\mapsto y^{-1}  w$$is an order reversing bijection and we have 
$$ v=\delta\bigl(\ell_0 h_0 h_0^{(p)}(\ell_0^{(p)})^{-1}\bigr) \preccurlyeq \sigma_0(w) $$in ${^{I_-}}W$ as wanted.

\section{Appendix I: Fontaine's rings}\label{sec:Appendix}

Given the tilt $\CC_p^\flat$ of $\CC_p$ and the tilt $\cO_{\CC_p}^\flat$ of $\cO_{\CC_p}$, we have Fontaine's classical ring $A_{\rm cris}$. It is the $p$-adic completion of the DP envelope of $A_{\rm inf}=\WW(\cO_{\CC_p}^\flat)$ with respect to the kernel of the surjective map $\vartheta\colon A_{\rm inf} \to \cO_{\CC_p}$. The ideal ${\rm Ker}(\vartheta)$ is principal. Generators are, for example, $([\epsilon]-1)/([\epsilon^{1/p}]-1)$ or $p-[\underline{p}]$ where $\epsilon=(1,\zeta_p,\zeta_{p^2},\ldots)\in \cO_{\CC_p}^\flat$ is a compatible sequence of primitive $p^n$-th roots of unity and $\underline{p}=(p,p^{1/p}, p^{1/p^2},\ldots)\in \cO_{\CC_p}^\flat$ is a compatible sequence of $p^n$-th roots of $p$. Let $\omega$ be a generator of ${\rm Ker}(\vartheta)$. We also have the ring  $\widehat{A}_{\rm st} := A_{\rm cris}\langle X \rangle$, introduced by C. Breuil in  \cite{Breuil}, given by  the $p$-adic completion of the DP polynomial ring $A_{\rm cris}[X]$. The map $\vartheta$ extends to a map $\vartheta_{\rm st}\colon \widehat{A}_{\rm st} \to \cO_{\CC_p}$ sending $X\mapsto 0$. It is also endowed with a Frobenius, extending the classical one on $A_{\rm cris}$, requiring  that $\varphi(X)=(1+X)^p-1$. 

Given elements $r$ and $s\in\QQ$ with $0 \leq r\leq s$ let $$A_{r,s}=A_{\rm inf}\left\{ \frac{p}{[\underline{p}]^r},\frac{[\underline{p}]^s}{p}\right\},$$be the $p$-adic completion of the ring $A_{\rm inf}[W,Z]/([\underline{p}]^r W-p, pZ-[\underline{p}]^s,WZ-[\underline{p}]^{s-r})$. It coincides with the $p$-adic completion of the subring $A_{\rm inf}\left[ \frac{p}{[\underline{p}]^r},\frac{[\underline{p}]^s}{p}\right]$ of ${\rm Frac}(A_{\rm inf})$ by \cite[Cor. 2.2]{Berger}.  For $\gamma\in A_{r,s}$ a non-zero element we define $$A_{r,s,\gamma}^{\rm st}=A_{\rm inf}\left\{ \frac{p}{[\underline{p}]^r},\frac{[\underline{p}]^s}{p},\frac{X}{p\gamma} \right\}$$to be the $p$-adic completion of the ring $$A_{\rm inf}[W,Z,X,Y]/\bigl([\underline{p}]^r W-p, pZ-[\underline{p}]^s,WZ-[\underline{p}]^{s-r},p\gamma  Y-X\bigr).$$ For $r < 1 <s$  the map $\vartheta_{\rm st}$ estends to a map $$\vartheta_{r,s,\gamma}\colon A_{r,s,\gamma}^{\rm st} \longrightarrow \cO_{\CC_p}, \quad W\mapsto p^{1-r}, Z\mapsto p^{s-1}, Y\mapsto 0.$$
For $r<s$ we also have the map  $$j_{r,s}\colon A_{r,s,\gamma}^{\rm st} \longrightarrow k,$$that on $A_{\rm inf}$ is  the composite of $\vartheta$ with the quotient map $\cO_{\CC_p} \to k$ on the residue field and that sends $W$, $Z$, $X$ and $Y$ to $0$. It coincides with the composite of the quotient map $\cO_{\CC_p} \to k$ with $\vartheta_{r,s,\gamma}$ whenever the latter exists.

\begin{lemma}\label{lemma:Astrs} Assume that $p\geq 3$. Then

\begin{itemize}

\item[1)] For every $s\leq  p-1  $ the map $A_{\rm inf}\to A_{r,s,\gamma}^{\rm st}$ extends to a map $\widehat{A}_{\rm st}\to A_{r,s,\gamma}^{\rm st}$. We have ${\rm Ker}(\vartheta_{\rm st}) \subset [\underline{p}]^{{\rm min}(r,1)}  A_{r,s,\gamma}^{\rm st}$ and ${\rm Ker}(\vartheta_{\rm st}) \subset \omega  A_{r,s,\gamma}^{\rm st}$ if $\gamma\in \omega A_{r,s}$. Moroever, the ring $ A_{r,s,\gamma}^{\rm st}$  is $p$-torsion free and $p$-adically, and hence $[\underline{p}]$-adically, complete and separated. 

\item[2)] For $r < 1 <s$ the kernel of $\vartheta_{r,s,\gamma}$ is generated by $(\omega, W- [\underline{p}]^{1-r}, Z-[\underline{p}]^{s-1}, Y) $.

\item[3)] For $r<r' < 1 < s' <s $ and $\alpha\in A_{r',s'}$ a non-zero element, there are unique morphisms of $A_{\rm inf}$-algebras $A_{r,s}\to A_{r's'}$, resp. of $\widehat{A}_{\rm st}$-algebras  $A_{r,s,\gamma}^{\rm st} \to A_{r',s',\alpha \gamma}^{\rm st}$ sending $W \mapsto [\underline{p}]^{r'-r} W$, $Z\mapsto [\underline{p}]^{s-s'} Z$, $Y\mapsto \alpha Y$. 

\item[4)] For $pr<s$ Frobenius $\varphi$ on $\widehat{A}_{\rm st}$ extends uniquely to a ring homomorphism  $\varphi\colon A_{r,s,\gamma}^{\rm st} \to A_{pr,s, \varphi(\gamma)}^{\rm st}$ such that $\varphi(W)=W $, $\varphi(Z)=[\underline{p}]^{(p-1)s} Z$, $\varphi(Y)=Y \cdot \frac{(1+X)^p-1}{X}$.

\end{itemize}
\end{lemma}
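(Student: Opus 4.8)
The plan is to read off all four assertions from (a) the explicit presentations of the rings, (b) a concrete realization of $A_{r,s}$ and $A_{r,s,\gamma}^{\rm st}$ inside $\mathrm{Frac}(A_{\rm inf})$ and a polynomial variable, à la Berger \cite{Berger}, and (c) the universal properties of $p$-adic completion and of divided-power envelopes. The only genuine arithmetic inputs are the estimate $v_p(n!)\le\lfloor n/(p-1)\rfloor$ and the two identities $p=[\underline p]^rW$ and $[\underline p]^s=pZ$ that hold by construction in $A_{r,s}$. Throughout I would realize, following \cite[Cor. 2.2]{Berger}, $A_{r,s}$ as the $p$-adic completion of the subring $A_{\rm inf}[p/[\underline p]^r,[\underline p]^s/p]$ of $\mathrm{Frac}(A_{\rm inf})$, and then (after eliminating $X=p\gamma Y$ via the last defining relation) $A_{r,s,\gamma}^{\rm st}$ as the $p$-adic completion of the polynomial ring $A_{r,s}[Y]$.

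For (1), I would first treat $p$-torsion-freeness and completeness. Since $A_{\rm inf}$ is $p$-torsion free, so is the above subring, and one checks as in \cite{Berger} that its $p$-adic completion $A_{r,s}$ is $p$-torsion free and $p$-adically complete and separated; the same then follows for $A_{r,s}[Y]{}^{\wedge}=A_{r,s,\gamma}^{\rm st}$ (an element killed by $p$, resp.\ in $\bigcap_n p^n$, has all its $Y$-coefficients killed by $p$, resp.\ in $\bigcap_n p^n A_{r,s}$). Because $p=[\underline p]^rW\in[\underline p]^{\min(r,1)}A_{r,s,\gamma}^{\rm st}$ and $[\underline p]^s=pZ\in pA_{r,s,\gamma}^{\rm st}$, for $r>0$ the $p$-adic and $[\underline p]$-adic topologies coincide, so $A_{r,s,\gamma}^{\rm st}$ is also $[\underline p]$-adically complete and separated. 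Next, for the extension $\widehat A_{\rm st}\to A_{r,s,\gamma}^{\rm st}$: since the target is $p$-adically complete it suffices to map the divided-power polynomial ring $D_{A_{\rm inf}}((\omega))\langle X\rangle$ into it, i.e.\ to check that $\omega^{[n]}=\omega^n/n!$ and $X^{[n]}$ have images in $A_{r,s,\gamma}^{\rm st}$. The element $X^{[n]}$ maps to $(p^n/n!)\gamma^nY^n$ with $p^n/n!\in\ZZ_p$, so this is immediate; for $\omega^{[n]}$ I would expand $\omega^n=(p-[\underline p])^n$ binomially, use $[\underline p]^s=pZ$ to convert powers of $[\underline p]$ into powers of $p$, and invoke $v_p(n!)\le\lfloor n/(p-1)\rfloor\le\lfloor n/s\rfloor$ — this is exactly where $s\le p-1$ is used — to see that every monomial $\binom nk p^{n-k}[\underline p]^k/n!$ lies in $A_{r,s,\gamma}^{\rm st}$, and (since for $n\ge1$ either a factor $p$ or a non-trivial power of $[\underline p]$ always survives) in fact in $[\underline p]^{\min(r,1)}A_{r,s,\gamma}^{\rm st}$. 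This last observation, together with $X=p\gamma Y\in p\gamma A_{r,s,\gamma}^{\rm st}$, gives the kernel inclusions: $\ker\vartheta_{\rm st}$ is generated by the $\omega^{[n]}$ and $X^{[n]}$, hence lies in $[\underline p]^{\min(r,1)}A_{r,s,\gamma}^{\rm st}$; and if $\gamma\in\omega A_{r,s}$ then $X,\omega\in\omega A_{r,s,\gamma}^{\rm st}$, so $\ker\vartheta_{\rm st}\subseteq\omega A_{r,s,\gamma}^{\rm st}$.

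Parts (2), (3), (4) are then formal. For (2), quotienting the presentation by $I=(\omega,\,W-[\underline p]^{1-r},\,Z-[\underline p]^{s-1},\,Y)$ forces $W\mapsto[\underline p]^{1-r}$, $Z\mapsto[\underline p]^{s-1}$, $X\mapsto0$, and the three relations $[\underline p]^rW-p$, $pZ-[\underline p]^s$, $WZ-[\underline p]^{s-r}$ become $-\omega$, $[\underline p]^{s-1}\omega$, $0$, hence vanish mod $I$; so $A_{r,s,\gamma}^{\rm st}/I\cong(A_{\rm inf}/\omega)^{\wedge}_p=\cO_{\CC_p}$, and since $I\subseteq\ker\vartheta_{r,s,\gamma}$ induces precisely this isomorphism, $\ker\vartheta_{r,s,\gamma}=I$. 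For (3) and (4) I would define the maps on generators and check the defining relations are preserved — for (3), $[\underline p]^r\cdot[\underline p]^{r'-r}W=[\underline p]^{r'}W=p$, $p\cdot[\underline p]^{s-s'}Z=[\underline p]^{s-s'}[\underline p]^{s'}=[\underline p]^s$, $([\underline p]^{r'-r}W)([\underline p]^{s-s'}Z)=[\underline p]^{s-r}$, and $p\gamma\cdot\alpha Y=p(\alpha\gamma)Y=X$; for (4), $[\underline p]^{pr}\varphi(W)=\varphi(p)=p$ forces $\varphi(W)=p/[\underline p]^{pr}$, similarly $\varphi(Z)=[\underline p]^{ps}/p=[\underline p]^{(p-1)s}Z$, and $p\varphi(\gamma)\varphi(Y)=\varphi(X)=(1+X)^p-1$ yields $\varphi(Y)=Y\cdot\frac{(1+X)^p-1}{X}$ — then note the source is topologically generated over $A_{\rm inf}$ (resp.\ $\widehat A_{\rm st}$) by $W,Z,Y$, giving uniqueness, and that the target is $p$-adically complete, so the map passes to completions. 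The hypotheses $r'<1<s'$ and $pr<s$ are exactly what makes the target rings $A_{r',s',\alpha\gamma}^{\rm st}$, resp.\ $A_{pr,s,\varphi(\gamma)}^{\rm st}$, defined and the displayed formulas meaningful.

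The hard part will be the first assertion of (1): the $p$-torsion-freeness and $p$-adic separatedness of $A_{r,s,\gamma}^{\rm st}$, which really needs the concrete description of these rings in the spirit of \cite{Berger} rather than abstract completion nonsense, and the construction of $\widehat A_{\rm st}\to A_{r,s,\gamma}^{\rm st}$, which rests on the numerical interplay between divisibility by powers of $[\underline p]$, by powers of $p$ (through $[\underline p]^s=pZ$), and by $n!$ — this is precisely where the hypotheses $p\ge3$ and $s\le p-1$ are consumed. Everything else is routine checking of relations.
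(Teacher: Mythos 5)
Your overall strategy matches the paper's: $A_{r,s}$ realized via Berger's description, $A_{r,s,\gamma}^{\rm st}$ identified with the $p$-adic completion of $A_{r,s}[Y]$ to get $p$-torsion-freeness and completeness, the extension $\widehat{A}_{\rm st}\to A_{r,s,\gamma}^{\rm st}$ obtained by exhibiting divided powers of the ideal $(\omega,X)$ in a $p$-adically complete, $p$-torsion-free target, and parts (2)--(4) by checking the defining relations on generators (the paper likewise leaves these as direct computations). Your term-by-term binomial treatment of $\omega^{[n]}$, using $[\underline{p}]^s=pZ$ and $s\le p-1$, is a workable variant of the paper's induction (the paper instead proves $\omega^{p-1}\in pA_{r,s}$ and then $\omega^{p^n}\in p^{(p^n-1)/(p-1)}\,\omega A_{r,s}$ by induction), and with the boundary cases handled it does give both the existence of the divided powers and the inclusion ${\rm Ker}(\vartheta_{\rm st})\subset [\underline{p}]^{\min(r,1)}A_{r,s,\gamma}^{\rm st}$.

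There is, however, a genuine gap in your proof of the second kernel inclusion, ${\rm Ker}(\vartheta_{\rm st})\subset\omega A_{r,s,\gamma}^{\rm st}$ when $\gamma\in\omega A_{r,s}$. You deduce it from ``$X,\omega\in\omega A_{r,s,\gamma}^{\rm st}$,'' but ${\rm Ker}(\vartheta_{\rm st})$ is (topologically) generated by the \emph{divided} powers $\omega^{[n]}$, $X^{[n]}$, not by $\omega$ and $X$ as an ordinary ideal, and $\omega\in\omega A$ does not give $\omega^n/n!\in\omega A$: dividing by $n!$ can a priori consume the factor $\omega$ (note that $p\notin\omega A_{r,s}$ in the relevant range, since $\vartheta_{r,s,\gamma}$ kills $\omega$ but not $p$, so the $k=0$ term $p^n/n!$ of your expansion is not visibly in $\omega A$). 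What is actually needed is that the divided powers of $\omega$ themselves lie in $\omega A_{r,s}$; this is exactly what the paper's induction delivers, since it carries an explicit factor $\omega$ throughout ($\omega^{[p^n]}\in\omega A_{r,s}$ because $v_p\bigl((p^n)!\bigr)=\tfrac{p^n-1}{p-1}$). Your binomial route can be repaired, but not for free: write $\omega^{[n]}=\omega\cdot\omega^{n-1}/n!$ and bound each term $p^{\,n-1-k}[\underline{p}]^k/\bigl(k!\,(n-1-k)!\,n\bigr)$; in the worst case ($k=n-1$ with $n$ a power of $p$) this requires the sharp integrality $\lfloor (n-1)/(p-1)\rfloor\ge v_p(n!)$, which is finer than the crude bound $v_p(n!)\le n/(p-1)$ you invoke. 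So either add this refined estimate or adopt the paper's induction; as written, the assertion ${\rm Ker}(\vartheta_{\rm st})\subset\omega A_{r,s,\gamma}^{\rm st}$ is not proved, while the rest of your argument is sound.
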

\begin{proof} (1) The ring $A_{r,s}$ is proven to be $p$-torsion free and $p$-adically complete and separated in \cite{Berger}. As $A_{r,s,\gamma}^{\rm st}$ is the $p$-adic completion of 
$A_{r,s}[X,Y]/(p\gamma  Y-X) \cong A_{r,s}[Y]$, also $A_{r,s,\gamma}^{\rm st}$ is $p$-torsion free and  $p$-adically complete and separated.

By definition of $\widehat{A}_{\rm st}$ to prove the first claim it suffices to show that in $A_{r,s,\gamma}^{\rm st}$ the ideal generated by $\omega$ and $X$ admits divided powers for $\omega=p-[\underline{p}]$. As $A_{r,s,\gamma}^{st}$ is $p$-torsion free, it suffices to prove that $\omega^{p^n} \in p^n! \omega A_{r,s}$ and $X^{p^n}\in p^n! \gamma A_{r,s}^{\rm st}$. The $p$-adic valuation of $p^n!$ is $\frac{p^n-1}{p-1}$. As $p^n - \frac{p^n-1}{p-1} \geq p^{n-1}$ for $p\geq 3$  the second claim is clear. Next we  show that $\omega^{p^n} \in p^{\frac{p^n-1}{p-1}}  \omega A_{r,s}$. First $\omega^{p-1}\in (p,[\underline{p}])^{p-1} A_{\rm inf} \subset (p,  [\underline{p}]^{p-1}) A_{\rm inf} \subset  p  A_{r,s}$ as $\frac{[\underline{p}]^{p-1}}{p} \in A_{r,s} $ by assumption. Hence $\omega^p\in p \omega  A_{r,s}$.
Assuming that  $\omega^{p^n} \in p^{\frac{p^n-1}{p-1}} \omega  A_{r,s}$ one deduces that $\omega^{p^{n+1}} \in  p^{\frac{p^{n+1}-p}{p-1}}  \omega^p  A_{r,s} \subset  p^{\frac{p^{n+1}-p}{p-1}}  p \omega A_{r,s} $. As $p^{\frac{p^{n+1}-p}{p-1} +1}=p^{\frac{p^{n+1}-1}{p-1}}$ we conclude that $\omega^{p^{n+1}} \in p^{\frac{p^{n+1}-1}{p-1}} \omega  A_{r,s}$. This then holds for every $n$ by induction and implies the claim. 

For the second statement notice that the ideal ${\rm Ker}(\vartheta_{\rm st})$ is generated by the DP powers of $\omega$ and $X$. We have shown that the divided powers of $X$ lie in $ p A_{r,s,\gamma}^{\rm st}$ and that the divided powers of $\omega$ lie in $ \omega A_{r,s}\subset (p, [\underline{p}]) A_{r,s}$. As $(p, [\underline{p}]) A_{r,s} \subset ( [\underline{p}]^r,[\underline{p}])  A_{r,s}$ the claim follows.

The last statement follows remarking that  $[\underline{p}]^s= \frac{[\underline{p}]^s}{p} \cdot p$ in  $A_{r,s,\gamma}^{\rm st}$ and the latter is $p$-adically complete and separated.

(2), (3) and (4) are direct computations left to the reader using the explicit description of the rings involved.

\end{proof}

\begin{corollary}\label{corollary:invertiblematrix} Take $s\geq p-1 $.  Let $M$ be an $n\times n$ matrix with coefficients in  $\widehat{A}_{\rm st}$.   
\smallskip

(1) Let $\overline{M}$ be the reduction of $M$ modulo ${\rm Ker}(\vartheta_{\rm st})$. Assume that there exist  $r$ and $q\in \QQ$ with  $0\leq q<{\rm min}(r,1)$ and $r\leq s$ and an $n\times n$  matrix $\overline{N}$ with coefficients in $\cO_{\CC_p}$ such that $\overline{N} \overline{M}=p^q \mathbf{1}_n$. Then there exists an $n \times n$ matrix $N$ with coefficients in  $\widehat{A}_{r,s,\gamma}^{\rm st}$ such that  $M N=N M= [\underline{p}]^q \mathbf{1}_n$.

\smallskip
(2) Let $\overline{M}$ be the reduction of $M$ modulo $p$.   Assume that there exist $q$ and $r \in \mathbb{N}[1/p]$ with $0\leq q<r\leq s$  and an $n\times n$  matrix $\overline{N}$ with coefficients in $\widehat{A}_{\rm st}/p\widehat{A}_{\rm st}$ such that $\overline{N} \overline{M}=[\underline{p}]^q  \mathbf{1}_n$. Then there exists an $n \times n$ matrix $N$ with coefficients in  $\widehat{A}_{r,s,\gamma}^{\rm st}$ such that  $M N=N M= [\underline{p}]^q \mathbf{1}_n$.

\end{corollary}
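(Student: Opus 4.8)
The plan is to prove Corollary \ref{corollary:invertiblematrix} by a straightforward approximation/successive-completion argument inside the rings $A_{r,s,\gamma}^{\rm st}$, exploiting that these rings are $[\underline{p}]$-adically complete and separated by Lemma \ref{lemma:Astrs}(1). First I would treat part (1). By Lemma \ref{lemma:Astrs}(1), since $s\geq p-1\geq r$, the map $A_{\rm inf}\to A_{r,s,\gamma}^{\rm st}$ extends to $\widehat{A}_{\rm st}\to A_{r,s,\gamma}^{\rm st}$, so we may view $M$ as a matrix over $A_{r,s,\gamma}^{\rm st}$; moreover ${\rm Ker}(\vartheta_{\rm st})\subset [\underline{p}]^{\min(r,1)}A_{r,s,\gamma}^{\rm st}$. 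Lift $\overline N$ to a matrix $N_0$ over $A_{r,s,\gamma}^{\rm st}$ (the map $\vartheta_{r,s,\gamma}$ of the appendix is surjective onto $\cO_{\CC_p}$, or one lifts entrywise through $A_{\rm inf}\to\cO_{\CC_p}$). Then $N_0M - [\underline{p}]^q\mathbf 1_n$ has entries in ${\rm Ker}(\vartheta_{\rm st})\cdot A_{r,s,\gamma}^{\rm st}\subset [\underline{p}]^{\min(r,1)}A_{r,s,\gamma}^{\rm st}$, hence $N_0M=[\underline{p}]^q(\mathbf 1_n + [\underline{p}]^{\min(r,1)-q}C)$ for some matrix $C$ over $A_{r,s,\gamma}^{\rm st}$, using $q<\min(r,1)$ and that $A_{r,s,\gamma}^{\rm st}$ is $[\underline{p}]$-torsion free so division by $[\underline{p}]^q$ makes sense. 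Since $\min(r,1)-q>0$ and the ring is $[\underline{p}]$-adically complete and separated, the matrix $\mathbf 1_n+[\underline{p}]^{\min(r,1)-q}C$ is invertible via the geometric series $\sum_{i\geq 0}(-1)^i[\underline{p}]^{i(\min(r,1)-q)}C^i$, which converges $[\underline{p}]$-adically. Setting $N:=(\mathbf 1_n+[\underline{p}]^{\min(r,1)-q}C)^{-1}N_0$ gives $NM=[\underline{p}]^q\mathbf 1_n$. A symmetric argument (or the standard fact that a one-sided inverse of a square matrix over a commutative ring which is a non-zero-divisor is two-sided) yields $MN=[\underline{p}]^q\mathbf 1_n$; concretely, from $NM=[\underline{p}]^q\mathbf 1_n$ one gets $N(MN-[\underline{p}]^q\mathbf 1_n)=0$, and since $\det N$ is (up to a power of $[\underline{p}]$, by Cramer) a non-zero-divisor in $R:=A_{r,s,\gamma}^{\rm st}[t^{-1}]$, $MN=[\underline{p}]^q\mathbf 1_n$ already in $R$, hence in $A_{r,s,\gamma}^{\rm st}$ by $[\underline{p}]$-torsion-freeness.

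For part (2) the same scheme applies, now reducing modulo $p$ rather than modulo ${\rm Ker}(\vartheta_{\rm st})$. Here $0\leq q<r\leq s$, and by Lemma \ref{lemma:Astrs}(1) the ring $A_{r,s,\gamma}^{\rm st}$ is $p$-adically (hence $[\underline{p}]$-adically) complete and separated, and $p=[\underline{p}]^r W$ lies in $[\underline{p}]^r A_{r,s,\gamma}^{\rm st}$. Lift $\overline N$ over $\widehat{A}_{\rm st}/p$ to a matrix $N_0$ over $\widehat{A}_{\rm st}$, then push it into $A_{r,s,\gamma}^{\rm st}$. Then $N_0M-[\underline{p}]^q\mathbf 1_n\equiv 0\bmod p A_{r,s,\gamma}^{\rm st}$, so $N_0M-[\underline{p}]^q\mathbf 1_n\in pA_{r,s,\gamma}^{\rm st}\subset [\underline{p}]^r A_{r,s,\gamma}^{\rm st}$, and again $N_0M=[\underline{p}]^q(\mathbf 1_n+[\underline{p}]^{r-q}C)$ with $r-q>0$; the geometric series converges $[\underline{p}]$-adically and produces the desired $N$ exactly as above, with the two-sidedness handled identically.

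The only genuinely delicate point is bookkeeping the passage from $\widehat{A}_{\rm st}$-coefficients to $A_{r,s,\gamma}^{\rm st}$-coefficients and keeping track of which ideal contains ${\rm Ker}(\vartheta_{\rm st})$ or $p$: one must be careful that $q<\min(r,1)$ in (1) (resp.\ $q<r$ in (2)) is exactly what gives a strictly positive exponent $\min(r,1)-q$ (resp.\ $r-q$) of $[\underline{p}]$, which is what powers the convergence, and that no spurious $[\underline{p}]$- or $p$-torsion appears when dividing by $[\underline{p}]^q$ — this is guaranteed by the torsion-freeness in Lemma \ref{lemma:Astrs}(1). Everything else is the routine Newton/geometric-series completion argument, and the hypothesis $s\geq p-1$ is used solely to invoke Lemma \ref{lemma:Astrs}(1) so that $\widehat{A}_{\rm st}$ maps to $A_{r,s,\gamma}^{\rm st}$ and the DP-ideal estimate ${\rm Ker}(\vartheta_{\rm st})\subset [\underline{p}]^{\min(r,1)}A_{r,s,\gamma}^{\rm st}$ holds.
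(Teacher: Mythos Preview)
Your proof is correct and follows essentially the same approach as the paper's: lift $\overline{N}$ to a matrix $N'$ with coefficients in $\widehat{A}_{\rm st}$, observe that $N'M=[\underline{p}]^q\mathbf{1}_n+A$ with $A$ having entries in $[\underline{p}]^{\min(r,1)}A_{r,s,\gamma}^{\rm st}$ (resp.\ $[\underline{p}]^r A_{r,s,\gamma}^{\rm st}$ in part~(2)), factor as $[\underline{p}]^q B$ with $B\equiv\mathbf{1}_n$ modulo a positive power of $[\underline{p}]$, invert $B$ by completeness, and set $N=B^{-1}N'$. The only cosmetic differences are that the paper phrases the invertibility of $B$ via its determinant being a unit rather than writing out the geometric series, and that you are more explicit about the two-sided identity $MN=NM=[\underline{p}]^q\mathbf{1}_n$, which the paper leaves implicit.
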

\begin{proof} (1) Lift $\overline{N}$ to a matrix $N'$ with coefficients in $\widehat{A}_{\rm st}$. Then $N' M= [\underline{p}]^q \mathbf{1}_n + A$ with $A$ a matrix with coefficients in ${\rm Ker}(\vartheta_{\rm st})$. Then $A$ is a  matrix with coefficients in $ [\underline{p}]^{{\rm min}(r,1)}A_{r,s,\gamma}^{\rm st}$ by Lemma \ref{lemma:Astrs}. Hence, $ [\underline{p}]^q \mathbf{1}_n + A= [\underline{p}]^q B$ with $B$ a matrix with coefficients in $A_{r,s,\gamma}^{\rm st}$ congruent to the identity modulo  $[\underline{p}]^{{\rm min}(r,1)-q}$. Thus $B$ is invertible as its determinant is a unit by Lemma \ref{lemma:Astrs}. Take $N:= B^{-1} N'$.  

\smallskip

(2) As before lift $\overline{N}$ to a matrix $N'$ with coefficients in $\widehat{A}_{\rm st}$. Then $N' M= [\underline{p}]^q \mathbf{1}_n + A$ with $A$ with coefficients in $p \widehat{A}_{\rm st}$ and, hence, with coefficients in $ [\underline{p}]^{r}A_{r,s,\gamma}^{\rm st}$. Hence, $ [\underline{p}]^q \mathbf{1}_n + A= [\underline{p}]^q B$ with $B$ a matrix with coefficients in $A_{r,s,\gamma}^{\rm st}$ congruent to the identity modulo  $[\underline{p}]^{r-q}$. Thus $B$ is invertible and $N:= B^{-1} N'$ is the sought for matrix.

\end{proof}

\subsection{An approximation result}

The map $\vartheta\colon A_{\rm inf} \to \cO_{\CC_p}$ extends to a surjective map $\vartheta\colon A_{\rm inf}\bigl[[\underline{p}]^{-1}\bigr] \to \CC_p$ sending $[\underline{p}]\mapsto p$. Following the conventions in the paper we fix   $G_{\ZZ_p}\subset {\rm GSp}_{2g,\ZZ_p}$ a reductive subgroup and a parabolic subgroup $\widetilde{P}\subset G_{\cO_{\CC_p}}$ containing a Borel subgroup $\widetilde{B}\subset \widetilde{P}$ defined over $\ZZ_p$. We let $\widetilde{L}\subset \widetilde{P}$ be the induced standard Levi subgrpoup. We write $B$, $L$, $P$ for the induced subgroups of $G_{\CC_p}$.

\begin{lemma}\label{lemma:approximate} Let $h\in G_{\ZZ_p}( A_{\rm inf} )$ and let $\overline{A} \in {^h\widetilde{P}}(\CC_p)$ be an element such that, viewed as a matrix in ${\rm GSp}_{2g}\bigl(\CC_p\bigr)$,  it  has coefficients in ${\rm M}_{2g \times 2g}\bigl(\cO_{\CC_p}\bigr)$. Then, there exist matrices  $a\in {^h\widetilde{L}}(A_{\rm inf})$ and $A'\in {^h\widetilde{B}}\bigl(A_{\rm inf}\bigl[[\underline{p}]^{-1}\bigr]\bigr)$ such that $A'$, viewed as matrix in ${\rm GSp}_{2g}\bigl(A_{\rm inf}\bigl[[\underline{p}]^{-1}\bigr]\bigr)$, has coefficients in ${\rm M}_{2g \times 2g}\bigl(A_{\rm inf}\bigr)$  and $(A' \cdot a)\equiv \overline{A}$ modulo ${\rm Ker}(\vartheta)$. 

\end{lemma}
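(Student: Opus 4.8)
The plan is to reduce to the case $h=1$ by conjugation, and then to split $\overline{A}$ along the Levi decomposition of $\widetilde{P}$ and lift the two resulting factors to $A_{\rm inf}$ one at a time. First I would record some preliminaries. Since $\widetilde{P}$ is a parabolic of $G_{\ZZ_p}$ containing the Borel $\widetilde{B}$, which is defined over $\ZZ_p$, the group $\widetilde{P}$ is itself defined over $\ZZ_p$; so are its unipotent radical $\widetilde{U}:={\rm R}_u\widetilde{P}$ and the Levi $\widetilde{L}$ containing the chosen maximal torus $\widetilde{T}\subset\widetilde{B}$, and $\widetilde{U}\subset\widetilde{B}$. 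Over $\ZZ_p$ the multiplication map $\widetilde{U}\rtimes\widetilde{L}\to\widetilde{P}$ is an isomorphism of group schemes, and $\widetilde{U}$, being split unipotent, is isomorphic as a scheme to affine space $\AA^N_{\ZZ_p}$ (after, if necessary, enlarging $\ZZ_p$ to $\WW(\kappa')$ to split $\widetilde{T}$, which is harmless since $\WW(\kappa')\subset A_{\rm inf}$ compatibly with $\vartheta$ and with $\WW(\kappa')\subset\cO_{\CC_p}$). As $h\in G_{\ZZ_p}(A_{\rm inf})$ and $h^{-1}$ have $A_{\rm inf}$-integral matrix entries, conjugation by $h$, resp. by $\vartheta(h)$, preserves $A_{\rm inf}$-integrality, resp. $\cO_{\CC_p}$-integrality, of matrices, carries $\widetilde{P},\widetilde{B},\widetilde{L},\widetilde{U}$ to their $h$-conjugates, and commutes with $\vartheta$; hence, replacing $\overline{A}$ by $\vartheta(h)^{-1}\overline{A}\,\vartheta(h)\in\widetilde{P}(\CC_p)$ and conjugating the output back by $h$, it suffices to treat $h=1$.

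For $h=1$: because $\widetilde{P}$ is a smooth, hence flat, closed subscheme of ${\rm GSp}_{2g,\ZZ_p}$ and $\cO_{\CC_p}\hookrightarrow\CC_p$, any $\overline{A}\in\widetilde{P}(\CC_p)$ whose matrix entries lie in ${\rm M}_{2g\times 2g}(\cO_{\CC_p})$ is automatically an $\cO_{\CC_p}$-point of $\widetilde{P}$. Using the Levi decomposition over $\cO_{\CC_p}$ I write $\overline{A}=\overline{u}\cdot\overline{\ell}$ with $\overline{u}\in\widetilde{U}(\cO_{\CC_p})$, $\overline{\ell}\in\widetilde{L}(\cO_{\CC_p})$, and it remains to lift $\overline{u}$ and $\overline{\ell}$ along $\vartheta$. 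For $\overline{u}$ this is immediate: $\widetilde{U}(A_{\rm inf})=A_{\rm inf}^{N}$ surjects onto $\cO_{\CC_p}^{N}=\widetilde{U}(\cO_{\CC_p})$ since $\vartheta$ is surjective; choose a lift $u'\in\widetilde{U}(A_{\rm inf})\subset\widetilde{B}(A_{\rm inf})$, which moreover has $A_{\rm inf}$-integral entries. For $\overline{\ell}$ I would set $\widetilde{B}_L:=\widetilde{B}\cap\widetilde{L}$ (a Borel of $\widetilde{L}$) and use that $\widetilde{L}/\widetilde{B}_L$ is proper over $\ZZ_p$ and is covered by the $W_{\widetilde{L}}$-translates of the big cell $\widetilde{U}^{-}_{L}\,\widetilde{T}\,\widetilde{U}_{L}$ (with representatives $\widetilde{w}\in\widetilde{L}(\ZZ_p)$): the image of $\overline{\ell}$ in $\bigl(\widetilde{L}/\widetilde{B}_L\bigr)(\cO_{\CC_p})=\bigl(\widetilde{L}/\widetilde{B}_L\bigr)(\CC_p)$ lies in one such translate $\widetilde{w}\cdot(\text{big cell})$ — an open subscheme containing the image of the closed point of ${\rm Spec}\,\cO_{\CC_p}$ contains the whole image — so $\widetilde{w}^{-1}\overline{\ell}$ lies in $\bigl(\widetilde{U}^{-}_{L}\,\widetilde{T}\,\widetilde{U}_{L}\bigr)(\cO_{\CC_p})$, and each factor lifts to $A_{\rm inf}$: the unipotent factors as above, and $\widetilde{T}(\cO_{\CC_p})$ by Teichmüller lifts of compatible systems of $p$-power roots. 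This produces $a_0\in\widetilde{L}(A_{\rm inf})$ with $\vartheta(a_0)=\overline{\ell}$. Taking $A':=u'$ and $a:=a_0$ then gives $A'\in\widetilde{B}(A_{\rm inf})$ with integral entries, $a\in\widetilde{L}(A_{\rm inf})$, and $\vartheta(A'\cdot a)=\overline{u}\cdot\overline{\ell}=\overline{A}$, i.e. $A'\cdot a\equiv\overline{A}$ modulo ${\rm Ker}(\vartheta)$.

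The step I expect to be the main obstacle is lifting points of the smooth $\ZZ_p$-group $\widetilde{L}$ (and of the torus $\widetilde{T}$) along $\vartheta$: because ${\rm Ker}(\vartheta)$ is not nilpotent one cannot simply invoke formal smoothness, and the Bruhat-cell argument above is precisely what lets me avoid discussing whether $\bigl(A_{\rm inf},{\rm Ker}(\vartheta)\bigr)$ is a henselian pair. I remark that this argument in fact yields $A'$ with $A_{\rm inf}$-integral entries, so that the denominators allowed in the statement are not actually needed; they would become relevant only if one dropped the hypothesis that $\overline{A}$ is an $\cO_{\CC_p}$-point (e.g. if its similitude factor were a non-unit), in which case one would instead Iwasawa-decompose the Levi factor and the ``Borel part'' would live in $\widetilde{B}\bigl(A_{\rm inf}\bigl[[\underline{p}]^{-1}\bigr]\bigr)$.
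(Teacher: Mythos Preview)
Your reduction to $h=1$ is fine and matches the paper. The fatal gap is the sentence ``because $\widetilde{P}$ is a smooth, hence flat, closed subscheme of ${\rm GSp}_{2g,\ZZ_p}$ and $\cO_{\CC_p}\hookrightarrow\CC_p$, any $\overline{A}\in\widetilde{P}(\CC_p)$ whose matrix entries lie in ${\rm M}_{2g\times 2g}(\cO_{\CC_p})$ is automatically an $\cO_{\CC_p}$-point of $\widetilde{P}$.'' This is false: $\widetilde{P}$ is closed in ${\rm GSp}_{2g}$, but ${\rm GSp}_{2g}$ is \emph{not} closed in ${\rm M}_{2g\times 2g}$, only in ${\rm M}_{2g\times 2g}\times\GG_m$ via the similitude character. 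For instance $p\cdot\mathbf{1}_{2g}$ lies in the center of ${\rm GSp}_{2g}(\CC_p)\subset\widetilde{P}(\CC_p)$ and has integral entries, yet it is not in ${\rm GSp}_{2g}(\cO_{\CC_p})$ since its inverse does not. So your Levi decomposition $\overline{A}=\overline{u}\cdot\overline{\ell}$ over $\cO_{\CC_p}$ is illegitimate; you only get $\overline{u}\in{\rm R}_u P(\CC_p)$ and $\overline{\ell}\in L(\CC_p)$. Your final remark shows you sensed this, but you misidentified it as a side case: it is exactly the situation the lemma must cover (and the one arising in the application, where $\overline{\tau}'$ has determinant divisible by a positive power of $p$).

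The paper's proof confronts this head-on. It Levi-decomposes over $\CC_p$, then uses properness of $\widetilde{B}_L\backslash\widetilde{L}$ to write $\ell=u'\cdot\overline{a}$ with $u'\in\widetilde{B}_L(\CC_p)$ and $\overline{a}\in\widetilde{L}(\cO_{\CC_p})$; only $\overline{a}$ is integral and is lifted to $a\in\widetilde{L}(A_{\rm inf})$ by smoothness. The remaining factor $\overline{A}':=u\,u'\in B(\CC_p)$ is genuinely non-integral; the paper decomposes it along $\widetilde{B}=\widetilde{T}\ltimes\widetilde{U}$, lifts each $\GG_m$-coordinate $p^{\alpha}s_0$ of the torus part to $[\underline{p}^{\alpha}]\widetilde{s}_0\in A_{\rm inf}[[\underline{p}]^{-1}]$, and then checks---after conjugating into upper-triangular form inside ${\rm GL}_{2g}$---that the exponents $\alpha_i$ are all non-negative, so the resulting $A'$ has $A_{\rm inf}$-integral matrix entries while living only in $\widetilde{B}(A_{\rm inf}[[\underline{p}]^{-1}])$. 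This last integrality verification is the real content you are missing, and it is precisely why the $[\underline{p}]^{-1}$ in the statement is not decorative.
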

\begin{proof} We prove the statement for $h=\mathbf{1}$ and $\overline{A} \in \widetilde{P}(\CC_p)$. The other statement is proven from this by conjugating by $h$.

Decompose $\overline{A}=u \cdot \ell$ with $u\in {\rm R}_u P(\CC_p)$ (the unipotent radical of $P$) and $\ell \in L(\CC_p)$. Let $\widetilde{B}_L:=\widetilde{B}\cap \widetilde{L}$ be the Borel subgroup of $\widetilde{L}$ defined by $\widetilde{B}$. By the properness of the Grassmanian variety $\widetilde{B}_L\backslash \widetilde{L}$ we have that $$\widetilde{B}_L(\CC_p) \backslash \widetilde{L}(\CC_p)=\bigl(\widetilde{B}_L\backslash \widetilde{L}\bigr)(\CC_p)=\bigl(\widetilde{B}_L\backslash \widetilde{L}\bigr)\bigl(\cO_{\CC_p}\bigr) =\widetilde{B}_L(\cO_{\CC_p}) \backslash \widetilde{L}(\cO_{\CC_p}).$$Hence we can write $\ell \in L(\CC_p)=\widetilde{L}(\CC_p)$ as $u' \cdot \overline{a}$ with $u'\in \widetilde{B}_L(\CC_p)$ and  $\overline{a}\in  \widetilde{L}(\cO_{\CC_p})$. As the map $ \widetilde{L}\bigl(A_{\rm inf}\bigr)\to \widetilde{L}(\cO_{\CC_p})$ is surjective by the smoothness of $ \widetilde{L}$ we can lift $\overline{a}$ to an element $a\in  \widetilde{L}\bigl(A_{\rm inf}\bigr)$. 

Define $\overline{A}':=u u' \in B(\CC_p)$.  We are left to show that we can lift $\overline{A}'$ to an element in $A' \in \widetilde{B}\bigl(A_{\rm inf}\bigl[[\underline{p}]^{-1}\bigr]\bigr)$ which has coefficeints in $A_{\rm inf}$ when viewed in  ${\rm M}_{2g \times 2g}\bigl(A_{\rm inf}\bigl[[\underline{p}]^{-1}\bigr]\bigr)$. As $\widetilde{B}$ over $\cO_{\CC_p}$  is the semidirect product of a split torus $\widetilde{T}$ and a unipotent group $\widetilde{U}$, and the latter is a split extension of $\mathbb{G}_a$'s, we are left to prove the statement for  $\mathbb{G}:=\mathbb{G}_m$ and $\mathbb{G}:=\mathbb{G}_a$. Given a non-zero element $s \in\mathbb{G}(\CC_p)\subset \CC_p$, let $\alpha$ be its $p$-adic valuation. Then $s=p^{\alpha} s_o$ with $s_0\in \mathbb{G}\bigl(\cO_{\CC_p}\bigr)$. As $\mathbb{G}\bigl(A_{\rm inf}\bigr)\to \mathbb{G}\bigl(\cO_{\CC_p}\bigr)$ is surjective, we can lift $s_0$ to an element $\widetilde{s}_0\in \mathbb{G}\bigl(A_{\rm inf}\bigr)$ and then $\widetilde{s}:=\bigl[\underline{p}^{\alpha}\bigr]\widetilde{s}_0\in \mathbb{G}\bigl(A_{\rm inf}\bigl[[\underline{p}]^{-1}\bigr]\bigr)$ lifts $s$ as wanted. We need to show that the lift can be taken in ${\rm M}_{2g \times 2g}\bigl(A_{\rm inf}\bigr)$. Notice that this can be proven after conjugating by an element of ${\rm GL}_{2g}(\cO_{\CC_p} )$. 

We claim that, after possibly conjugating by an element of ${\rm GL}_{2g}(\cO_{\CC_p} )$ , we may assume that $\widetilde{B} \subset \widetilde{B}^{\rm std}$, with  $\widetilde{B}^{\rm std}\subset {\rm GL}_{2g,\ZZ_p}$ the standard Borel subgroup of upper tringular matrices, and that $\widetilde{T}$ is contained in the standard torus of  $\widetilde{B}^{\rm std}$. Indeed, extend $\widetilde{B}$ to a Borel subgroup $\widetilde{B}'$ of ${\rm GL}_{2g,\cO_{\CC_p}}$: first extend it over $\overline{\QQ}_p $ by extending the connected solvable subgroup  $\widetilde{B}_{\QQ_p}$ to a maximal connected solvable subgroup  $\widetilde{B}_{\overline{\QQ}_p}'$ of ${\rm GL}_{2g,\overline{\QQ}_p}$.  Since the scheme of Borel subgroups $\widetilde{B}_{\ZZ_p}^{\rm std}\backslash {\rm GL}_{2g,\ZZ_p} $ is proper, the $\overline{\QQ}_p$-valued point defined by $\widetilde{B}_{\overline{\QQ}_p}'$ extends to a $\overline{\ZZ}_p$-valued point so that, after possibly conjugation by an element of $ {\rm GL}_{2g}(\overline{\ZZ}_p)$, we may assume that $\widetilde{B}_{\QQ_p} \subset \widetilde{B}_{\QQ_p}^{\rm std}$  and, hence,  $\widetilde{B} \subset \widetilde{B}^{\rm std}$. As maximal tori in $\widetilde{B}^{\rm std}$ are conjugate over $\cO_{\CC_p}$ we may also assume that $\widetilde{T}$ is contained in the standard torus of  $\widetilde{B}^{\rm std}$ of diagonal matrices. 


By our assumption we now have that  $\overline{A}'$ lies in the upper triangular matrices in ${\rm M}_{2g \times 2g}\bigl(\cO_{\CC_p}\bigr)$. Consider the components $\overline{A}'_1$  on $\widetilde{T}(\CC_p)$ and $\overline{A}'_2$ in  $\widetilde{U}(\CC_p)$. By construction they  lie in the diagonal matrices and the upper triangular matrices in ${\rm M}_{2g \times 2g,\cO_{\CC_p}}$ respectively.  In particular, the  various projections of $\overline{A}'_2$ via the  decomposition (as schemes)  $\widetilde{U}\cong \mathbb{G}_a^r$ all lie in $\mathbb{G}_a(\cO_{\CC_p})$ and hence $\overline{A}'_2$ can be lifted to a matrix $A_2'$ in  $\widetilde{U}\bigl(A_{\rm inf}\bigr) \subset {\rm M}_{2g \times 2g}\bigl(A_{\rm inf}\bigr)$.

Fix an isomorphism $\widetilde{T}\cong \mathbb{G}_m^d$ and lift  $\overline{A}'_1=(s_1',\ldots, s_d')$ to an element $A_1'$ of  $\widetilde{T}\bigl[A_{\rm inf}[\underline{p}]^{-1}\bigr]\bigr)$ as explained above. Its image in ${\rm GL}_{2g}$ is a diagonal matrix of the form  
$\bigl(\bigl[\underline{p}^{\alpha_1}\bigr]\widetilde{s}_1,\ldots, \bigl[\underline{p}^{\alpha_{2g}}\bigr]\widetilde{s}_{2g}\bigr)$  with  $\widetilde{s}_1,\ldots, \widetilde{s}_{2g} \in \mathbb{G}_m (A_{\rm inf})$ and $\alpha_1,\ldots,\alpha_{2g}\in \QQ$. 
As the image of $\overline{A}'_1$ in the diagonal matrices of  ${\rm GL}_{2g}$ lies in  ${\rm M}_{2g \times 2g}\bigl(\cO_{\CC_p}\bigr)$ then each $\alpha_i$ must be non-negative.  Then $A':=A_1' \cdot A_2'$ is a lift of $\overline{A}'$ in $\widetilde{B}\bigl(A_{\rm inf}\bigl[[\underline{p}]^{-1}\bigr]\bigr)$ with the required property.

\end{proof}

\section{Appendix II: some facts about Weyl groups}\label{sec:Weyl}

We recall some facts from \cite[\S 1.4]{GK} and  \cite[\S 3.5]{Fzips}. Let $H$ be a connected reductive group over an algebariclly closed field $k$.  Fix a maximal torus $T$ and a Borel $B$. Let $P$ be a parabolic subgroup of $H$ of type $I$ and let $J:={^{w_0} I}$ be the type opposite to $I$.  Every element $w$ in $W$ can be written uniquely as $w=  {w^J} w_J$ for $w_J\in W_J$ and $w^J\in W^J$ and then $\ell(w)= \ell(w_J)+\ell( w^J)$. Similarly we have a unique decomposition $w=  w_I  {^I w}$ for $w_I\in W_I$ and ${^Iw}\in {^IW}$ and then $\ell(w)= \ell(w_I)+\ell({^I w})$. The groups $W$, $W_J$ and $W_I$ contain  unique elements of maximal length, denoted by $w_0$, $w_{J,0}$, $w_{I,0}$ respectively. By maximality they are elements of order $2$ and $w_0$ has the property that it sends positive roots to negative roots and that for every $w\in W$ $$\ell(w w_0)=\ell(w_0 w)=\ell(w_0)-\ell(w).$$ We also deduce from the uniqueness of the decomposition above  that in $W^J$ and in ${^IW}$ we have unique elements of maximal length $w_0^J\in W^J$ and ${^Iw}_0\in {^IW}$, characterized by the property that  $w_{I,0} {^Iw_0}=w_0=w_0^J w_{J,0} $. Conjugation by $w_0$ defines an involution $\iota_0\colon W\to W$ preserving the length of elements. In particular, it preserves the set of simple reflections $\Delta$.   Then,

\begin{lemma}\label{lemma:z} The map $\iota_0\colon W\to W$, $w\mapsto w_0 w w_0$, has the following properties 

\begin{itemize}

\item[a.] $\iota_0$ identifies $W_J=\iota_0(W_I)$ and $J=\iota_0(I)$. In particular, it induces mutually inverse and length preserving isomorphisms  $W_J\to W_I$ and $W_I\to W_J$;

\item[b.] $\iota_0$ identifies $W^J=\iota_0(W^I)$ (resp. ${^J W}=\iota_0({^IW})$) and induces mutually inverse and length preserving bijections  $W^J\to W^I$ and $W^I\to W^J$ (resp. ${^JW}\cong {^IW}$);

\item[c.] $\iota_0$ identifies  $w_{I,0}=\iota_0(w_{J,0})=w_0 w_{J,0} w_0$ and, in particular, we have   $w_0^J={^Iw}_0$ and  $w_0^I={^Jw}_0$. 

\end{itemize}

\end{lemma}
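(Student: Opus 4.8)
The plan is to reduce everything to the single fact, recalled just above the statement, that $\iota_0$ is an involutive, length-preserving automorphism of $W$ with $\iota_0(\Delta)=\Delta$, together with the two distinguished decompositions $w_0=w_0^J w_{J,0}=w_{I,0}\,{}^Iw_0$.

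\emph{Part (a).} By definition $J={}^{w_0}I=\iota_0(I)$, so $W_J=\iota_0(W_I)$ since $\iota_0$ is a group automorphism. As $\iota_0\circ\iota_0=\mathrm{id}$, the restriction $\iota_0|_{W_J}$ is inverse to $\iota_0|_{W_I}$; and since the Coxeter length of $(W_I,I)$ is the restriction of $\ell$ (and likewise for $J$), both restrictions preserve length. This yields the mutually inverse length-preserving isomorphisms $W_I\to W_J$ and $W_J\to W_I$ and the identifications claimed in (a).

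\emph{Part (b).} Here I would observe that, because $\iota_0(W_I)=W_J$ and $\iota_0$ is a bijection preserving $\ell$, it carries each right coset $W_I w$ isometrically and bijectively onto the right coset $\iota_0(W_I)\iota_0(w)=W_J\iota_0(w)$, hence sends the unique shortest element of $W_I w$ to the unique shortest element of $W_J\iota_0(w)$. Thus $\iota_0({}^IW)={}^JW$; running the same argument with the left cosets $wW_I\mapsto \iota_0(w)W_J$ gives $\iota_0(W^I)=W^J$. As in (a), involutivity and length-preservation upgrade these to mutually inverse length-preserving bijections $W^I\to W^J$, $W^I\leftarrow W^J$, and likewise ${}^IW\cong{}^JW$.

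\emph{Part (c).} The bijection $\iota_0\colon W_J\to W_I$ is length-preserving, so it carries the unique longest element $w_{J,0}$ of $W_J$ to the unique longest element $w_{I,0}$ of $W_I$; this is the identity $w_{I,0}=\iota_0(w_{J,0})=w_0 w_{J,0}w_0$ (and symmetrically $w_{J,0}=w_0 w_{I,0}w_0$). For the last two identities I would use the decompositions recalled above: from $w_0=w_0^J w_{J,0}$ and $w_0=w_{I,0}\,{}^Iw_0$, and the fact that $w_0$, $w_{I,0}$, $w_{J,0}$ all have order $2$, one gets $w_0^J=w_0 w_{J,0}$ and ${}^Iw_0=w_{I,0}w_0$; substituting $w_{J,0}=w_0 w_{I,0}w_0$ into the first gives $w_0^J=w_{I,0}w_0={}^Iw_0$. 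Since $\iota_0$ is an involution we also have $I=\iota_0(J)$, so the roles of $I$ and $J$ are symmetric and the same computation yields $w_0^I={}^Jw_0$.

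I do not expect a genuine obstacle: the whole argument is bookkeeping with the two coset decompositions and the involution $\iota_0$. The step requiring the most care is Part (b), where one must check that $\iota_0$ restricts to an \emph{isometric} bijection between a coset of $W_I$ and a coset of $W_J$, so that the distinguished minimal-length representatives are matched up correctly (and that right versus left cosets are handled consistently with the conventions for ${}^IW$ and $W^I$); everything else is a formal consequence of $\iota_0$ being a length-preserving involutive automorphism of $W$ with $\iota_0(I)=J$.
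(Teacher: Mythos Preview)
Your proof is correct and follows essentially the same approach as the paper: parts (a) and (b) are dismissed in the paper as trivial (you supply the details, which are fine), and your computation for $w_0^J={}^Iw_0$ in part (c) is exactly the chain ${}^Iw_0=w_{I,0}w_0=(w_0w_{J,0}w_0)w_0=w_0w_{J,0}=w_0^J$ that the paper gives.
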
 
\begin{proof} All assertions are trivial except for the identity  $w_0^J={^Iw}_0$ that follows from 
$$ {^Iw_0}=w_{I,0} w_0= w_0 w_{J,0} w_0 w_0=w_0 w_{J,0}=w_0^J.$$

\end{proof}

\begin{lemma}\label{lemma:dualityonWJ} (1)  The map $W\to W$, sending $w\to w^{-1}$, is an order and legth preserving bijection.   If $w=w^I w_I$ with $w^I\in W^I$ and $w_I\in W_I$ then $w^{-1}=w_I^{-1} (w^I)^{-1}$ and $w_I^{-1}\in W_I$ and $ (w^I)^{-1}= {^I w^{-1}}\in {^I W}$.

\

(2) The map $W\to W$, sending $w\mapsto w_0 w w_{I,0}$, induces an order reversing bijection $W^I\to W^I$ such that $\ell(w_0 w w_{I,0})=\ell(w_0^I)-\ell(w)$. 

Similarly the map $W\to W$, sending $w\mapsto w_{I,0} w w_0$, induces an order reversing bijection ${^IW}\to {^IW}$ such that $\ell(w_{I,0} w w_0)=\ell(w_0^I)-\ell(w)$.

\

(3) The map $\sigma_0\colon W\to W$, sending $w \mapsto {w_0^I} w$, satisfies the following properties:

\begin{itemize}

\item[i.] it is a bijection and it sends bijectively  ${^IW}\to {^JW}$;
\item[ii.] it satisfies  $\ell({w_0^I} w)=\ell({w_0^J})-\ell(w)$ for every $w\in {^IW}$;
\item[iii.] it reverses the orders $\preccurlyeq$ on ${^IW}$ and ${^JW}$,  defined as follows. Let $\leq$ be the Bruhat order on $W$ and let $z=\varphi(w_0^I)$. For $w$ and $w'\in {^IW}$ (resp.~in ${^J}W$)  we say that $w' \preccurlyeq w$ iff there exists $u\in W_I$ (resp.~$u\in W_J$) such that $u w' z \varphi(u)^{-1} z^{-1}\leq w$   (resp.~$u w' \varphi(u)^{-1}\leq w$) .
\end{itemize}

\end{lemma}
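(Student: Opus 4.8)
The statement splits into three largely independent parts, of which (3) carries the content; (1) and (2) are standard facts about the finite Coxeter group $W$ that I will need as tools in (3). For (1) the plan is to invoke the subword characterization of the Bruhat order: reversing a reduced word for $w$ gives a reduced word for $w^{-1}$, so $\ell(w^{-1})=\ell(w)$ and $w'\le w$ forces $(w')^{-1}\le w^{-1}$; the same applied to the inverses gives the converse, so $w\mapsto w^{-1}$ is order- and length-preserving. For the decomposition, from $w=w^Iw_I$ with $\ell(w)=\ell(w^I)+\ell(w_I)$ we get $w^{-1}=w_I^{-1}(w^I)^{-1}$, again length-additive, with $w_I^{-1}\in W_I$; since $x\in W^I$ iff $x$ has no right descent in $I$ iff $x^{-1}$ has no left descent in $I$ iff $x^{-1}\in{}^IW$, inversion interchanges $W^I$ and ${}^IW$, whence $(w^I)^{-1}\in{}^IW$, and as it lies in the coset $W_Iw^{-1}$ it must equal ${}^I(w^{-1})$.

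For (2) I would note that, restricted to $W^I$, the map $w\mapsto w_0ww_{I,0}$ agrees with $w\mapsto(w_0w)^I$: for $w\in W^I$ a direct descent computation shows $w_0ww_{I,0}$ is the minimal-length representative of the coset $w_0wW_I$. This exhibits the map on $W^I$ as the composition of left multiplication by $w_0$ (which reverses the Bruhat order) with the order-preserving projection $W\to W^I$; it sends $W^I$ into $W^I$, it is an involution there, so it is an order-reversing bijection, and a short length count (using $\ell(w_0x)=\ell(w_0)-\ell(x)$, $\ell(w_0^I)=\ell(w_0)-\ell(w_{I,0})$, and $\ell(ww_{I,0})=\ell(w)+\ell(w_{I,0})$ for $w\in W^I$) yields $\ell(w_0ww_{I,0})=\ell(w_0^I)-\ell(w)$. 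The statement for ${}^IW$ then follows from (1), since inversion interchanges $W^I$ and ${}^IW$, preserves the Bruhat order, and $(w_0ww_{I,0})^{-1}=w_{I,0}w^{-1}w_0$.

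For (3)(i)--(ii) the plan is a length computation. Using Lemma \ref{lemma:z} I rewrite $w_0^I={}^Jw_0=w_{J,0}w_0$, so $\sigma_0(w)=w_{J,0}w_0w$. For $u\in W_J$ and $w\in{}^IW$, writing $uw_{J,0}w_0w=w_0\bigl(w_0(uw_{J,0})w_0\bigr)w$ with $w_0(uw_{J,0})w_0\in W_I$ and using length-additivity of $W_I\cdot{}^IW$, one gets $\ell(u\,\sigma_0(w))=\ell(w_0)-\ell(w_{J,0})+\ell(u)-\ell(w)$, which is minimized at $u=\mathbf{1}$. This simultaneously shows $\sigma_0(w)\in{}^JW$ and gives $\ell(\sigma_0(w))=\ell(w_0)-\ell(w_{J,0})-\ell(w)=\ell(w_0^J)-\ell(w)$. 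Injectivity is clear, and surjectivity ${}^IW\to{}^JW$ follows from the same assertion applied to the pair $(J,I)$, since $\sigma_0^{-1}$ is left multiplication by $(w_0^I)^{-1}=w_0^J={}^Iw_0$.

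Part (3)(iii) is the heart of the lemma. The first move is to match the two existential conditions: conjugation by $c:=w_0^I$ carries $W_I$ isomorphically onto $W_J$ (indeed $cuc^{-1}=w_0(w_{I,0}uw_{I,0})w_0\in W_J$) and it is compatible with the twists, because $\varphi(cuc^{-1})=\varphi(c)\varphi(u)\varphi(c)^{-1}=z\varphi(u)z^{-1}$ for $z=\varphi(w_0^I)$. Substituting $v=cuc^{-1}$ in the defining inequality of $\preccurlyeq$ on ${}^JW$ and using $cu=(cuc^{-1})c$ and $\sigma_0(w)=cw$, the assertion reduces to the Bruhat-order equivalence
$$\exists\,u\in W_I:\ u\,w'\,z\varphi(u)^{-1}z^{-1}\le w\quad\Longleftrightarrow\quad\exists\,u\in W_I:\ c\bigl(u\,w\,z\varphi(u)^{-1}z^{-1}\bigr)\le c\,w'$$
for $w,w'\in{}^IW$, in which $w$ and $w'$ have swapped positions — precisely the shape required for an order-reversal. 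Proving this equivalence is the delicate step, and I expect the bookkeeping here to be the main obstacle: one passes between the two sides using the order-preserving anti-automorphism $w\mapsto w^{-1}$ of (1), the reversal of the Bruhat order under multiplication by $w_0$, and the subword property, while tracking how the factor $u\in W_I$ and the element $z\varphi(u)^{-1}z^{-1}\in W_{\varphi(J)}$ sit relative to the decomposition $c=w_0w_{I,0}$. The difficulty is combinatorial rather than conceptual — it is exactly the fact that $c=w_0^I$ is not the full longest element $w_0$ that forces the care, and the precise choice of twist $z=\varphi(w_0^I)$ is what makes the required cancellations go through.
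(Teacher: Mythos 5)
Parts (1), (2) and (3)(i)--(ii) of your proposal are correct and essentially in line with the paper: the paper also proves (2) by a length count showing that $w_0 w w_{I,0}$ is the minimal-length element of the coset $w_0 w W_I$, and it obtains (3)(i)--(ii) by writing $\sigma_0$ as the composite of $w\mapsto w_{I,0} w w_0$ with conjugation by $w_0$, which differs only cosmetically from your direct minimization of $\ell(u\,\sigma_0(w))$ over $u\in W_J$.

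The issue is (3)(iii), which you yourself identify as the heart of the lemma. Your transport of the condition on ${^J}W$ back to $W_I$ via $v=cuc^{-1}$ and $z=\varphi(c)$ is correct, but it only reformulates the assertion as the displayed Bruhat-order equivalence, and at that point you stop, saying you expect the remaining ``bookkeeping'' to be the main obstacle. That equivalence \emph{is} the content to be proved, so as written part (iii) is not established. Concretely, the missing step is the one the paper carries out: starting from a witness $u\in W_I$ with $u w' z\varphi(u)^{-1}z^{-1}\leq w$, one applies the fact that left multiplication by $y=w_0^I$ reverses the Bruhat order $\leq$ between ${^I}W$ and ${^J}W$ (this is what (2) combined with $\iota_0$ gives, i.e. the Bruhat-order reversal of $\sigma_0$ itself, which your (3)(i)--(ii) never records) to get $y\,u w' z\varphi(u)^{-1}z^{-1}\geq y w$, rewrites the left-hand side as $(yuy^{-1})(yw')\varphi(yuy^{-1})^{-1}$, and uses Lemma \ref{lemma:z} to see that $t:=yu^{-1}y^{-1}\in W_J$, whence $t\,(yw)\,\varphi(t)^{-1}\leq yw'$, i.e. $\sigma_0(w)\preccurlyeq\sigma_0(w')$; the converse direction is symmetric, using the inverse map given by left multiplication by $w_0^J$. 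Note moreover that invoking the order reversal requires controlling how the element $u w' z\varphi(u)^{-1}z^{-1}$ sits relative to ${^I}W$ --- a point the paper addresses explicitly and your sketch does not touch --- so what remains is not routine bookkeeping. Until this step is written out, the conclusion of (iii), which is exactly what Theorem \ref{thm:main} uses, is a genuine gap.
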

\begin{proof} (1) is clear. 

(2) We prove the first statement. The second is obtained from the first using the order and length preserving bijection $W^I \to {^IW}$ given by $w\mapsto w^{-1}$ obtained from (1). First we compute lengths. For every $w\in W^I$ and every $y\in W_I$ we have $\ell(w y)=\ell(w) + \ell(y)$. Then $\ell(w_0 w y)=\ell(w_0)-  \ell(y) -\ell(w) $. So for $y=w_{I,0}$ we get that $\ell(w_0 w w_{I,0})=\ell(w_0)-  \ell( w_{I,0}) -\ell(w)=\ell(w_0^I)-\ell(w)$ as wanted. On the other hand the function $y\mapsto \ell(w_0 w y)$ is minimized for $\ell(y)$ maximal, i.e., for $y=w_{I,0}$. Hence, $w_0 w w_{I,0} $ is the element in  the coset $w_0 w W_I$ of minimal length, i.e., $w_0 w w_{I,0} \in W^I$.

(3) As $w_0 w_{I,0}=w_0$, the given map is the composite of the map $W\to W$, $w\mapsto w_{I,0} w w_0$, and of the map $\iota_0\colon W\to W$, $w\mapsto w_0 w w_0$. These are both involutions and hence  $\sigma_0$ is a bijection on $W$. It sends bijectively ${^IW}$ to ${^JW}$, reversing the Bruhat orders $\leq $, thanks to Claim (2) and Lemma \ref{lemma:z}.

Write $y:=w_0^I$ for simplicity. Take $w$ and $w'\in {^IW}$ such that  $w' \preccurlyeq w$ and let $u\in W_I$ be such that $u w' z \varphi(u)^{-1} z^{-1}\leq w$. This implies that $u w' z \varphi(u)^{-1} z^{-1}\in {^IW}$. We deduce that  $ 
y u w' z \varphi(u)^{-1} z^{-1}=(y u y^{-1}) (y w') \varphi(y u y^{-1})^{-1}  \geq y w$. Write $t:=y u^{-1} y^{-1}$. Then  $t (y w) \varphi(t)^{-1} \leq y w'$. Note that $y u y^{-1}=w_0^I u {^Iw}_0=w_0 \bigl(w_{I,0} u w_{I,0}\bigr) w_0 $ as $y^{-1}={^I w_0 }$ by Claim (1). Since $w_{I,0} u w_{I,0}\in W_I$, we conclude that $t \in W_J$ thanks to  Lemma \ref{lemma:z} so that  $y w\preccurlyeq  y w'$.

\end{proof}

\end{document}